\documentclass[12pt]{article}
\usepackage{amsmath,amsthm,amssymb}
\usepackage[hang]{caption2}
\usepackage{indentfirst}
\usepackage[section]{placeins}
\numberwithin{equation}{section}     
\usepackage{graphics}
\usepackage{pdfsync}
\usepackage{hyperref}


\def\accentsfrancais{applemac}
  \RequirePackage[\accentsfrancais]{inputenc}

\overfullrule=0pt

\vsize= 18cm
\hsize=13.5cm
\hoffset=3mm
\voffset=3mm
\newtheorem{thm}{Theorem}
\newtheorem{theorem}[thm]{Theorem}
\numberwithin{thm}{section}
\newtheorem{lemma}[thm]{Lemma}
\newtheorem{cor}[thm]{Corollary}

\newtheorem{remark}[thm]{Remark}



\def\eps{\varepsilon}
\def\ds{\displaystyle}
\def\emptyset{/\kern-.51em o}
\def\eq{\mathop{\vrule height2,6pt depth-2,3pt
         width -1pt\kern 0pt =}}

\let\norbali\normalbaselines
\def\anorbali{\norbali\advance\lineskip\jot
\advance\baselineskip\jot\advance\lineskiplimit\jot}
\def\ouvre{\let\normalbaselines\anorbali}
\setlength{\parskip}{2pt} \setlength{\textwidth}{15.7cm}
\setlength{\oddsidemargin}{.25cm}
\setlength{\evensidemargin}{0.25cm} \setlength{\textheight}{22cm}
\setlength{\topmargin}{-1cm} \setlength{\footskip}{1.5cm}

\def\D{{\mathbb{D}}}

\def\D{{\mathbb{D}}}
\def\R{{\mathbb{R}}}

\def\N{\rm \hbox{I\kern-.2em\hbox{N}}}
\def\Z{\rm \hbox{Z\kern-.3em\hbox{Z}}}

\def\Gd{{\bf d}}
\def\Ge{{\bf e}}

\def\Gr{{\bf r}}

\def\Gx{{\bf x}}

\def\GA{{\bf A}}

\def\GE{{\bf E}}

\def\GG{{\bf G}}

\def\GI{{\bf I}}

\def\GQ{{\bf Q}}
\def\GR{{\bf R}}

\def\GX{{\bf X}}


\begin{document}
\title{Junction between  a  plate and a rod of comparable thickness in nonlinear elasticity. Part II}
\author{D. Blanchard$^{1}$, G. Griso$^{2}$}
\date{}
\maketitle
 
{\footnotesize
\begin{center}

$^{1}$  Universit\'e de Rouen,  France. DECEASED on February 11th, 2012\footnote{We were just finishing this paper when suddenly two days later my friend the Professor Dominique Blanchard died. We worked seven years together, our collaboration was very successful for both.  }.

$^{2}$ Laboratoire J.-L. Lions--CNRS, Bo\^\i te courrier 187, Universit\'e  Pierre et
Marie Curie,\\ 4~place Jussieu, 75005 Paris, France, \; Email: griso@ann.jussieu.fr\\

\end{center} }

\begin{abstract}
 We analyze the asymptotic behavior of a junction problem between a plate and a perpendicular rod made  of a nonlinear elastic material. The two parts of this multi-structure have small thicknesses of the same order $\delta$.  We use the  decomposition techniques obtained for the large deformations and the displacements in order to derive the limit energy as $\delta$ tends to $0$. 
  
\end{abstract}
 
\smallskip
\noindent KEY WORDS: nonlinear elasticity,  junctions, straight rod, plate. 

\noindent Mathematics Subject Classification (2000): 74B20, 74K10, 74K30. 

\section{ Introduction} 
\medskip
In  a former paper \cite{DB_GGI} we derive the limit energy of the junction problem between a plate and a rod under an assumption that couples  their respective thicknesses $\delta$ and $\eps$ to the order of the Lam\'e's coefficients of the materials in the plate and  in the rod. This assumption precludes the case where the thicknesses have the same order and the structure is made of the same material (see equation 1.1 in the introduction of \cite{DB_GGI}). The aim of the present paper is to analyze this specific case for a total energy of order $\delta^5$. As in \cite{DB_GGI}, the structure is clamped on a part of the lateral boundary of the plate and it is free on the rest of its boundary.

The main difference here is the behavior in the rod in which, for this level of energy (which is higher than the  maximum allowed in \cite{DB_GGI}), the stretching-compression is of order $\delta$ while the bending is of order $\delta^{1/2}$. The most important  consequence is that in the limit model for the rod  the  stretching-compression is actually given by the bending  in the rod (through a nonlinear relation) and by the bending  in the plate at the junction point (see \eqref{VIs} and \eqref{V=WQ(0)}). The bending and torsion models   in the rod are  the standard linear ones. In the plate the limit model is the  Von K\'arm\'an system in which the action of the rod is  modelized by a   punctual force at the junction.
 
Let us emphasize that in order to obtain sharp estimates on the deformations in the junction area, see Lemma \ref{lemme2}, we use the decomposition techniques in thin domains (see \cite{DB_GGI},\cite{GDecomp}, \cite{BGJE}, \cite{BGRod}).   In order to scale the applied forces which induce a total energy energy of order $\delta^5$, from Lemma \ref{lemme2} and \cite{BGRod}, we derive a nonlinear Korn's inequality for the rod (as far as the plate is concerned this type of inequality is already established in \cite{DB_GGI}). The nonlinear character of these Korn's inequalities  prompt us to adopt  smallness assumptions on some components of the forces. Then, we are in a position to study the asymptotic behavior of the Green-St Venant's strain tensors 
in the two parts of the structure. At last this allows us to characterize the limit of the rescaled infimum of the 3d energy as the minimum of a functional over a set of limit admissible displacements which includes the nonlinear relation between the stretching-compression and the bending  in the rod.

In Section 2 we introduce a few general notations. Section 3 gives a few recalls on  the decomposition technique of the deformations in thin structures. In Section 4, we derive  first estimates on the terms of the decomposition of a deformation in the rod and sharp estimates in  the junction area. In the same section we also obtain Korn's inequality in the rod.  In Section 5 we introduce the elastic energy and the assumptions on the applied forces in order to obtain a total elastic energy of order $\delta^5$. In Section 6 we analyze the asymptotic behavior of the Green-St-Venant's strain tensors in the plate and in the rod. In Section 7 we prove the main result of the paper namely the characterization of the  limit of the rescaled infimum of the 3d energy.

As general references on the theory of elasticity we refer to \cite{Ant1} and \cite{C1}. The reader is referred to \cite{Ant}, \cite{Trab}, \cite{GROD} for an introduction of rods models and to \cite{C11}, \cite{C2}, \cite{Ciarlet3}, \cite{FJM}, \cite{MS} for plate models. As far as junction problems in multi-structures we refer to \cite{CDN}, \cite{C2}, \cite{DJR}, \cite{DJP}, \cite{Ldret},  \cite{Auf}, \cite{Gru1}, \cite{Gru2}, \cite{GSP}, \cite{Murat}, \cite{Gaud}, \cite{BGG1}, \cite{BGG2}, \cite{BG1}, \cite{GSR}, \cite{BGNOTE}, \cite{BGELAS}. For the decomposition method in thin structures we refer to \cite{GROD}, \cite{GSP}, \cite{GDecomp}, \cite{GSR}, \cite{BGRod}, \cite{BGJE}, \cite{SimplCoq}, \cite{BGELAS}.

\vskip 1mm
\section { Notations.}
\vskip 1mm
Let us introduce a few  notations and definitions concerning the geometry of the plate and the rod. Let $\omega$ be a bounded domain in $\R^2$ with lipschitzian boundary included in the plane $(O; \Ge_1, \Ge_2)$ and such that $O\in \omega$. 
The plate is the domain
 $$ \Omega_\delta=\omega\times ]-\delta,\delta[.$$
Let $\gamma_0$ be an open subset of $\partial \omega$ which is made of a finite number of connected components (whose closure are disjoint). The corresponding lateral part of the boundary of $\Omega_\delta$ is
$$\Gamma_{0,\delta}=\gamma_0\times]-\delta,\delta[.$$ The rod is defined by  
$$B_{\delta}=D_\delta \times ]-\delta,L[,\qquad D_\delta=D(O,\delta),\qquad D=D(O,1)$$ where $\delta>0$ and where $D_r=D(O,r)$ is the disc of radius $r$ and center the origin $O$. We assume that $D\subset\subset \omega$. The whole structure is denoted
 $${\cal S}_{\delta}=\Omega_\delta\cup B_{\delta}$$
 while the junction is 
 $$C_{\delta}=\Omega_\delta\cap B_{\delta}=D_\delta \times ]-\delta,\delta[.$$
We denote  $I_d$  the identity map of $\R^3$.   The set of admissible deformations of the structure is
$$\D_\delta=\Big\{ v\in H^1({\cal S}_{\delta} ; \R^3)\;\;|\;\; v=I_d\enskip\hbox{on}\enskip\Gamma_{0,\delta}\Big\}.$$
The Euclidian norm in $\R^k$ ($k\ge 1$) will be denoted $|\cdot|$ and the Frobenius norm of a square matrix will be denoted $|||\cdot|||$.
\section { Some recalls.}
\vskip 1mm
\noindent To any vector $F\in \R^3$ we associate the antisymmetric matrix $\GA_F$ defined by
\begin{equation}\label{1}
\forall \Gx\in \R^3,\qquad \GA_F\, \Gx=F\land \Gx.
\end{equation}
From now on, in order to simplify the notations, for any open set ${\cal O}\subset \R^3$ and any field $u\in H^1({\cal O} ; \R^3)$, we set
$$\GG_s(u,{\cal O})=||\nabla u+(\nabla u)^T||_{L^2({\cal O};\R^{3\times 3})}$$ and
$$\Gd(u,{\cal O})=||dist(\nabla u,SO(3))||_{L^2({\cal O})}.$$
\subsection{Recalls on the  decompositions of the plate-displacement.}

\noindent  We know (see \cite{GSP} or \cite{GDecomp}) that any displacement $u\in   H^1(\Omega_\delta; \R^3)$ of the plate is decomposed as 
\begin{equation}\label{FDec}
u(x)={\cal U} (x_1,x_2)+ x_3{\cal R}(x_1,x_2) \land \Ge_3+\overline{u} (x),\qquad x\in \Omega_\delta
\end{equation}
where ${\cal U}$ is defined by
$${\cal U}(x_1,x_2)={1\over 2\delta}\int_{-\delta}^\delta u(x_1,x_2,x_3)dx_3\qquad \hbox{for a.e. } \; x_3\in \omega$$ and where ${\cal R}$ is also defined via an average involving the displacement  $u$ (see \cite{GSP} or \cite{GDecomp}). The fields ${\cal U}$ and ${\cal R}$ belong to $ H^1(\omega; \R^3)$ and $\overline{u} $ belongs to $  H^1(\Omega_\delta; \R^3)$.  The sum of the two first terms $U_e(x)={\cal U} (x_1,x_2)+x_3{\cal R}(x_1,x_2)\land \Ge_3$ is called the elementary displacement associated to $u$.

\noindent  The following Theorem is proved in \cite{GSP} for the displacements in $H^1(\Omega_\delta; \R^3)$ and in \cite{GDecomp} for the displacements in $W^{1,p}(\Omega_\delta; \R^3)$ ($1<p<+\infty$).
\begin{theorem}\label{Theorem 3.3.}
Let $u\in   H^1(\Omega_\delta; \R^3)$, there exists  an elementary displacement $U_e(x)={\cal U}(x_1,x_2)+x_3{\cal R}(x_1,x_2)\land \Ge_3$ and a warping  $\overline{u}$  satisfying \eqref{FDec} such that 
\begin{equation}\label{3.7}
\begin{aligned}
&||\overline{u} ||_{L^2(\Omega_\delta; \R^3)}\le C\delta \GG_s(u,\Omega_\delta),\quad||\nabla \overline{u} ||_{ L^2(\Omega_\delta; \R^3)}\le C \GG_s(u,\Omega_\delta),\\
&\Bigl\|{\partial {\cal R}\over \partial x_\alpha}\Big\|_{ L^2(\omega; \R^3)}\le {C\over \delta^{3/2}} \GG_s(u,\Omega_\delta),\\
& \Bigl\|{\partial{\cal U}\over \partial x_\alpha}-{\cal R}\land \Ge_\alpha\Big\|_{ L^2(\omega; \R^3)}\le {C\over \delta^{1/2}}\GG_s(u,\Omega_\delta),\\
& ||\nabla u-\GA_{\cal R}||_{ L^2(\omega; \R^9)}\le C\GG_s(u,\Omega_\delta),
\end{aligned}
\end{equation} where the constant $C$ does not depend on $\delta$.
\end{theorem}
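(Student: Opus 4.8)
The plan is to produce ${\cal U}$, ${\cal R}$ and $\overline u$ by a \emph{local-then-global} argument. First I would cover $\overline\omega$ by a family of discs $\{\omega_k\}$ of radius of order $\delta$, chosen (using the Lipschitz regularity of $\partial\omega$) so that the cells $\Omega_k=\omega_k\times]-\delta,\delta[$ have finite overlap bounded independently of $\delta$ and so that two adjacent cells meet on a set of volume $\gtrsim\delta^3$. Each $\Omega_k$ is, up to a translation and an \emph{isotropic} dilation of ratio of order $\delta$, a fixed reference cylinder; pulling $u$ back to that cylinder, applying the scale-invariant Korn inequality there, and scaling back --- while checking that an isotropic dilation leaves $\GG_s$ unchanged --- I would obtain a rigid displacement $r_k(x)=a_k+b_k\land x$ with
\[
\|\nabla u-\GA_{b_k}\|_{L^2(\Omega_k)}\le C\,\GG_s(u,\Omega_k),\qquad \|u-r_k\|_{L^2(\Omega_k)}\le C\delta\,\GG_s(u,\Omega_k),
\]
the constant $C$ being independent of $\delta$ and $k$.

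Next I would glue the local rotations. With a partition of unity $\{\varphi_k\}$ subordinate to $\{\omega_k\}$ satisfying $|\nabla\varphi_k|\le C/\delta$, set ${\cal R}=\sum_k\varphi_k b_k$ on $\omega$, take ${\cal U}$ to be the transverse average $\frac{1}{2\delta}\int_{-\delta}^{\delta}u\,dx_3$ as in the statement, and define $\overline u:=u-{\cal U}-x_3\,{\cal R}\land\Ge_3$, so that \eqref{FDec} holds by construction. The decisive point is that adjacent rotation vectors must be compared \emph{through their gradients, not their $L^2$ norms}: since $\GA_{b_k}-\GA_{b_{k'}}$ is a constant matrix on $\Omega_k\cap\Omega_{k'}$, the triangle inequality applied to $\nabla u-\GA_{b_k}$ and $\nabla u-\GA_{b_{k'}}$ gives $|b_k-b_{k'}|\le C\delta^{-3/2}\,\GG_s(u,\Omega_k\cup\Omega_{k'})$. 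Writing $\partial{\cal R}/\partial x_\alpha=\sum_k(\partial\varphi_k/\partial x_\alpha)(b_k-b_{k_0})$ on each cell (using $\sum_k\nabla\varphi_k=0$) and summing over the cells with the finite overlap then yields the second estimate of \eqref{3.7}.

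The remaining inequalities I would obtain by the same cellwise comparison with $r_k$, summed over $k$, carefully tracking the powers of $\delta$ coming from $|\Omega_k|\sim\delta^3$, $|\omega_k|\sim\delta^2$ and the weight $|x_3|\le\delta$ on $\Omega_k$. Splitting $\nabla u-\GA_{\cal R}=(\nabla u-\GA_{b_k})+\GA_{b_k-{\cal R}}$, with $|b_k-{\cal R}|$ bounded on $\omega_k$ by the maximum of $|b_k-b_{k'}|$ over the cells adjacent to $\omega_k$, gives the last estimate of \eqref{3.7}. Taking transverse averages of the columns of $\nabla u-\GA_{b_k}$ controls $\partial{\cal U}/\partial x_\alpha-b_k\land\Ge_\alpha$ in $L^2(\omega_k)$ by $\delta^{-1/2}\GG_s(u,\Omega_k)$, which together with the bound on $|b_k-{\cal R}|$ gives the third estimate. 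Finally, on each cell $\overline u=(u-r_k)+(\overline{r_k}-{\cal U})+x_3\,(b_k-{\cal R})\land\Ge_3$, where $\overline{r_k}$ denotes the transverse average of $r_k$; estimating the three terms (the second one again through $\|u-r_k\|_{L^2}$) gives $\|\overline u\|_{L^2(\Omega_\delta)}\le C\delta\,\GG_s$, and differentiating this identity --- the contribution of $x_3\,(\partial{\cal R}/\partial x_\alpha)\land\Ge_3$ being absorbed by the second estimate of \eqref{3.7} --- gives $\|\nabla\overline u\|_{L^2(\Omega_\delta)}\le C\,\GG_s$. That ${\cal U},{\cal R}\in H^1(\omega;\R^3)$ and $\overline u\in H^1(\Omega_\delta;\R^3)$ is then immediate.

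The main obstacle is precisely the first step: one cannot simply rescale the whole slab $\omega\times]-\delta,\delta[$ to unit thickness, because the anisotropic dilation in $x_3$ does not keep $\GG_s$ under control (the shear entries $\partial u_3/\partial x_\alpha+\partial u_\alpha/\partial x_3$ get mismatched), so one is forced to localise at the isotropic scale $\delta$ --- hence the $\delta$-scale covering of $\omega$ and the modest geometric work near $\partial\omega$ that it requires. The second delicate point, singled out above, is that comparing adjacent local rotations through their gradients rather than their $L^2$ norms is exactly what produces the sharp exponent $\delta^{-3/2}$ rather than a worse one; everything else is bookkeeping of powers of $\delta$.
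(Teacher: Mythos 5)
Your plan is correct and the $\delta$-bookkeeping works out: the local Korn inequality on isotropic $\delta$-scale cells, the comparison of neighbouring rotation axes through the gradient of ${\cal R}$ (giving $|b_k-b_{k'}|\lesssim\delta^{-3/2}\GG_s$), and the partition-of-unity gluing together produce exactly the exponents in \eqref{3.7}. That said, the paper does not prove this theorem at all --- it cites it from \cite{GSP} and \cite{GDecomp} --- and the construction there differs from yours in one essential respect. Griso does \emph{not} build ${\cal R}$ by gluing local rotation axes with a partition of unity: he defines ${\cal R}$ \emph{explicitly}, by weighted transverse moments of $u$ (${\cal R}_1,{\cal R}_2$ from $x_3$-moments of $u_1,u_2$, ${\cal R}_3$ from a separate in-plane average), and only then uses local Korn on $\delta$-cubes to compare that explicit ${\cal R}$ with the local rigid rotations. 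The two routes are equivalent for the theorem as you have quoted it (an existence statement), but they do not produce the same triple $({\cal U},{\cal R},\overline u)$: your partition-of-unity ${\cal R}$ does not in general satisfy $\int_{-\delta}^{\delta}x_3\,\overline u_\alpha\,dx_3=0$, the second orthogonality relation in \eqref{RelWarPlaque} that the paper records immediately after the theorem and relies on downstream --- indeed that relation is precisely what pins ${\cal R}_1,{\cal R}_2$ down to the moments. So Griso's explicit-moment construction buys a canonical, covering-independent decomposition with the orthogonality relations built in, at the cost of having to prove the Korn estimates \emph{for} that prescribed ${\cal R}$, whereas yours manufactures ${\cal R}$ directly from the local Korn data but loses the canonical moment structure. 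Two smaller imprecisions worth flagging: the isotropic dilation does \emph{not} literally leave $\GG_s$ unchanged (rescaling a cell of size $\delta$ to unit size multiplies $\GG_s$ by $\delta^{-1/2}$), though the powers cancel exactly as you intend once you scale the rigid displacement back; and cells cut by $\partial\omega$ are not translates of a fixed reference cylinder, so a uniform Korn constant over the whole family really does require the bi-Lipschitz boundary-straightening you allude to rather than a bare dilation argument.
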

 The warping $\overline{u}$ satisfies the following relations
 \begin{equation}\label{RelWarPlaque}
 \begin{aligned}
& \int_{-\delta}^\delta\overline{u}(x_1,x_2,x_3)dx_3=0,\qquad  \int_{-\delta}^\delta x_3\overline{u}_\alpha(x_1,x_2,x_3)dx_3=0\quad \hbox{for a.e. } (x_1,x_2)\in \omega.
\end{aligned} \end{equation}
If a deformation  $v$ belongs to $\D_\delta$ then  the displacement $u=v-I_d$ is equal to $0$ on  $\Gamma_{0,\delta}$. In this case  the the fields ${\cal U}$, ${\cal R}$  and the warping $\overline{u}$ satisfy
\begin{equation}\label{CLUR}
{\cal U}= {\cal R}=0\qquad \hbox{on } \enskip \gamma_0,\qquad \overline{u}=0\qquad \hbox{on}\quad \Gamma_{0,\delta}.
\end{equation}
 Then,  from   \eqref{3.7}, for any deformation $v\in \D_\delta$   the corresponding displacement $u=v-I_d$ verifies the following estimates (see also \cite{GSP}):
\begin{equation}\label{Estm}
\begin{aligned}
||{\cal R}||_{H^1(\omega; \R^3)}+||{\cal U}_3||_{H^1(\omega)}\le {C\over \delta^{3/2}} \GG_s(u,\Omega_\delta),\\
 ||{\cal R}_3||_{L^2(\omega)}+||{\cal U}_\alpha||_{H^1(\omega)}\le {C\over \delta^{1/2}} \GG_s(u,\Omega_\delta).
\end{aligned} \end{equation} The constants depend only on $\omega$. From the above estimates we deduce the following Korn's type inequalities for the displacement $u$
\begin{equation}\label{KoP0}
\begin{aligned}
&||u_\alpha||_{L^2(\Omega_\delta)}\le C_0\GG_s(u,\Omega_\delta),\qquad ||u_3||_{L^2(\Omega_\delta)}\le {C_0\over \delta}\GG_s(u,\Omega_\delta),\\
& ||u-{\cal U}||_{L^2(\Omega_\delta ; \R^3)}\le {C\over \delta}\GG_s(u,\Omega_\delta),\quad ||\nabla u||_{L^2(\Omega_\delta;\R^9)}\le {C\over \delta}\GG_s(u,\Omega_\delta).
 \end{aligned}
\end{equation}
 Due to Theorem 3.3  established in \cite{BGJE}, the displacement  $u=v-I_d$ is also decomposed as 
\begin{equation}\label{FDec8}
u(x)={\cal U} (x_1,x_2)+ x_3(\GR(x_1,x_2)-\GI_3) \Ge_3+\overline{\overline{u}} (x),\qquad x\in \Omega_\delta
\end{equation} where  $\GR\in H^1(\omega; \R^{3\times 3})$, $\overline{\overline{u}} \in  H^1(\Omega_\delta; \R^3)$ and we have the following estimates
\begin{equation}\label{87}
\begin{aligned}
&||\overline{\overline{u}} ||_{L^2(\Omega_\delta; \R^3)}\le C\delta \Gd(v,\Omega_\delta)\qquad ||\nabla \overline{\overline{u}} ||_{ L^2(\Omega_\delta; \R^9)}\le C \Gd(v,\Omega_\delta)\\
&\Bigl\|{\partial \GR\over \partial x_\alpha}\Big\|_{ L^2(\omega; \R^9)}\le {C\over \delta^{3/2}} \Gd(v,\Omega_\delta)\\
& \Bigl\|{\partial{\cal U}\over \partial x_\alpha}-(\GR-\GI_3) \Ge_\alpha\Big\|_{ L^2(\omega; \R^3)}\le {C\over \delta^{1/2}}\Gd(v,\Omega_\delta)\\
& \bigl\|\nabla  v-\GR \big\|_{ L^2(\Omega_\delta; \R^9)}\le C\Gd(v,\Omega_\delta)
\end{aligned}
\end{equation} where the constant $C$ does not depend on $\delta$. The following boundary conditions are satisfied
\begin{equation}\label{CLUR8}
{\cal U}=0,\quad  \GR=\GI_3\qquad \hbox{on } \enskip \gamma_0,\qquad \overline{\overline{u}}=0\qquad \hbox{on}\quad \Gamma_{0,\delta}.
\end{equation} Due to \eqref{87} and the above boundary conditions we obtain
\begin{equation}\label{88}
 ||\GR-\GI_3||_{H^1(\omega; \R^9)}+ ||{\cal U}||_{H^1(\omega; \R^3)}\le {C\over \delta^{3/2}}\Gd(v,\Omega_\delta).
\end{equation} 
\subsection{Recall on the  decomposition of the rod-deformation.}

Now, we consider a deformation  $v\in H^1(B_\delta ; \R^3)$ of the rod $B_\delta$. This deformation can be   decomposed as (see Theorem 2.2.2 of \cite{BGRod})
\begin{equation}\label{DecR}
v(x)={\cal V}(x_3)+\GQ(x_3)\big(x_1\Ge_1+x_2\Ge_2\big)+\overline{\overline{v}}(x),\qquad x\in B_\delta,
 \end{equation}
where $\ds{\cal V}(x_3) ={1\over |D_\delta|}\int_{D_\delta}v(x)dx_1dx_2$ belongs to $H^1(-\delta,L;\R^3)$, where  $\GQ$  belongs to $ H^1(-\delta,L;SO(3))$ and $\overline{\overline{u}}$ belongs to $ H^1(B_\delta;\R^3)$. Let us give a few comments on the above decomposition. The term ${\cal V}$ gives the deformation of the center line of the rod. The second term $\GQ(x_3)\big(x_1\Ge_1+x_2\Ge_2\big)$ describes the rotation of the cross section (of the rod) which contains the point $(0,0,x_3)$. The sum of the terms ${\cal V} (x_3)+\GQ(x_3)\big(x_1\Ge_1+x_2\Ge_2\big)$ is called an elementary deformation of the rod.

The following theorem (see Theorem 2.2.2 of \cite{BGRod}) gives a decomposition \eqref{DecR} of a deformation and estimates on the terms of this decomposition.
\smallskip
\begin{theorem}\label{Theorem II.2.2.}  Let $v\in H^1(B_\delta;\R^3)$, there exists an elementary deformation ${\cal V}(x_3) +\GQ(x_3) \big(x_1\Ge_1 +x_2\Ge_2 \big)$ and a warping $\overline{\overline{v}}$ satisfying \eqref{DecR} and such that 
\begin{equation}\label{EstmRod}
\begin{aligned}
&||\overline{\overline{v}}||_{L^2(B_\delta;\R^3)}\le C\delta\Gd(v, B_\delta),\\
&||\nabla\overline{\overline{v}}||_{L^2(B_\delta;\R^{3\times 3})}\le C \Gd(v, B_\delta),\\
&\Bigl\|{d\GQ\over dx_3}\Big\|_{L^2(-\delta, L ;\R^{3\times 3})}\le {C\over\delta^2} \Gd(v, B_\delta),\\
& \Bigl\|{d{\cal V}\over dx_3}-\GQ\Ge_3\Big\|_{L^2(-\delta,L;\R^3)}\le {C\over \delta}\Gd(v, B_\delta),\\
& \bigl\|\nabla v-\GQ \big\|_{L^2(B_\delta;\R^{3\times 3})}\le C\Gd(v, B_\delta),
\end{aligned}
\end{equation}
 where the constant $C$ does not depend on $\delta$ and $L$.
 \end{theorem}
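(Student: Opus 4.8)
The plan is to mimic, in the one-dimensional curved-rod geometry, the proof strategy that underlies the analogous plate decomposition Theorem \ref{Theorem 3.3.}, adapting it to the cylinder $B_\delta=D_\delta\times]-\delta,L[$. First I would \emph{construct} the elementary deformation. For a.e.\ $x_3$ one restricts $v$ to the cross section $D_\delta\times\{x_3\}$; the mean value over that disc defines ${\cal V}(x_3)$, and the mean of the ``infinitesimal rotation part'' of $\nabla v$ over the disc (i.e.\ the antisymmetrised transverse gradient, suitably normalised) defines an antisymmetric matrix field from which $\GQ(x_3)$ is obtained by integrating an ODE (or, in the cleanest formulation, by projecting the mean gradient onto $SO(3)$). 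The warping $\overline{\overline v}$ is then \emph{defined} by difference through \eqref{DecR}, so that \eqref{DecR} holds by construction and the only real work is proving the five estimates in \eqref{EstmRod}.

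The engine for all five estimates is the rigidity (geometric Korn) inequality of Friesecke--James--M\"uller applied on each cross-sectional slice and then integrated in $x_3$: on a disc $D_\delta$ one has, for a constant depending only on the fixed disc $D$ (hence scale-invariant after rescaling to $D$), $\|\nabla v - R\|_{L^2(D_\delta)}\le C\,\mathrm{dist}(\nabla v,SO(3))_{L^2(D_\delta)}$ for a suitable constant rotation $R=R(x_3)$, together with the Poincar\'e-type control of $v$ minus the corresponding affine map. Choosing $R(x_3)$ to be the best-approximating rotation on the slice and comparing it with the constructed $\GQ(x_3)$ gives the last line $\|\nabla v-\GQ\|_{L^2(B_\delta)}\le C\,\Gd(v,B_\delta)$ directly. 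The control of the warping, $\|\overline{\overline v}\|_{L^2}\le C\delta\,\Gd$ and $\|\nabla\overline{\overline v}\|_{L^2}\le C\,\Gd$, follows from the Poincar\'e inequality on the slices (the $\delta$ factor being exactly the Poincar\'e constant of $D_\delta$) once one knows $\nabla v-\GQ$ is small in $L^2$ and that $\overline{\overline v}$ has vanishing slice-averages against $1,x_1,x_2$ by the normalisation built into ${\cal V}$ and $\GQ$. The derivative estimate $\|d\GQ/dx_3\|_{L^2}\le C\delta^{-2}\Gd$ is the characteristic one: it is obtained by differencing the slice-wise rotations $\GQ(x_3+h)$ and $\GQ(x_3)$, using that both are close (in the $L^2(D_\delta)$ sense, i.e.\ after multiplying by the $O(\delta)$ transverse variable) to $\nabla v$ on neighbouring slices, and dividing by $h$; the two powers of $\delta$ come from one Poincar\'e factor and one factor $1/|D_\delta|^{1/2}\sim\delta^{-1}$ in passing from an $L^2(D_\delta)$ bound on a function that is $O(|x_1\Ge_1+x_2\Ge_2|)\sim O(\delta)$ to an $L^\infty$/pointwise-in-$x_3$ bound on the matrix $\GQ$. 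Finally $\|d{\cal V}/dx_3-\GQ\Ge_3\|_{L^2}\le C\delta^{-1}\Gd$ follows by integrating $\partial_3 v$ over the slice: $d{\cal V}/dx_3$ is the slice-average of $\partial_3 v$, which differs from $\GQ\Ge_3$ by the slice-average of $\partial_3\overline{\overline v}$ plus a term involving $(d\GQ/dx_3)(x_1\Ge_1+x_2\Ge_2)$ of size $\delta\cdot\delta^{-2}\Gd=\delta^{-1}\Gd$.

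I expect the main obstacle to be \textbf{uniformity of the constants} — both in $\delta$ and, as the statement explicitly demands, in the rod length $L$. Scale-invariance in $\delta$ is handled by the standard change of variables $(x_1,x_2)\mapsto(x_1/\delta,x_2/\delta)$ sending $D_\delta$ to $D$ and tracking the powers of $\delta$ produced by the Jacobian and by the gradient rescaling; the subtlety is that the rigidity constant must be taken on the \emph{fixed} disc $D$ and applied slice by slice, not on the whole (long, thin) cylinder at once, precisely so that $L$ never enters. Independence of $L$ then survives integration in $x_3\in]-\delta,L[$ because every inequality used is local in $x_3$ (slice-wise rigidity, slice-wise Poincar\'e, a finite-difference quotient in $x_3$), so one only ever sums local contributions with $L$-free constants. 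A secondary technical point is the regularity/measurability of $x_3\mapsto\GQ(x_3)$ and the justification that the difference-quotient argument indeed yields an $H^1$ bound on $\GQ$ with values in $SO(3)$; this is dealt with by first obtaining the bound for the (linear-space-valued) best-approximation map and then using that near-rotations are uniformly close to their $SO(3)$-projection with Lipschitz control, exactly as in \cite{BGRod}, to which the detailed verification is referred.
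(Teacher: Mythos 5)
You should first note that the paper does not prove this statement: Theorem~\ref{Theorem II.2.2.} is quoted verbatim from Theorem~2.2.2 of \cite{BGRod}, so the comparison has to be against the construction in that reference. Your high-level skeleton — build the elementary deformation by slice-averaging, define the warping by difference so that \eqref{DecR} is automatic, get the warping estimates from Poincar\'e on the cross-section with constant $\sim\delta$, get the $d\GQ/dx_3$ estimate by comparing rotations attached to neighbouring positions, and keep $L$ out of the constants by working locally in $x_3$ — is the right one, and it is the one used in \cite{BGRod}.

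There is, however, a genuine gap in the mechanism you propose for producing $\GQ$ and for the two key estimates $\bigl\|\nabla v-\GQ\bigr\|_{L^2(B_\delta)}\le C\,\Gd(v,B_\delta)$ and $\bigl\|d\GQ/dx_3\bigr\|_{L^2}\le C\delta^{-2}\Gd(v,B_\delta)$. You invoke the geometric rigidity inequality \emph{slice by slice on the two-dimensional discs} $D_\delta\times\{x_3\}$, writing $\|\nabla v-R(x_3)\|_{L^2(D_\delta)}\le C\,\mathrm{dist}(\nabla v,SO(3))_{L^2(D_\delta)}$. This does not make sense as stated: for $x_3$ frozen, the restriction of $v$ to the disc only determines the two transverse columns $\partial_1 v,\partial_2 v$, so any two-dimensional rigidity statement on $D_\delta$ can at best control those two columns (via the Stiefel manifold of $3\times 2$ frames) and gives no information whatever about $\partial_3 v$. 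Consequently it cannot produce the full $3\times 3$ bound $\|\nabla v-\GQ\|_{L^2(B_\delta)}\le C\,\Gd(v,B_\delta)$, and without that bound your derivations of the fourth estimate (which you correctly reduce to a slice-average of the third column of $\nabla v-\GQ$) and of the difference-quotient estimate for $d\GQ/dx_3$ collapse. The actual argument in \cite{BGRod} (following Griso's decomposition method) applies the Friesecke--James--M\"uller rigidity inequality on genuinely three-dimensional blocks $D_\delta\times\,]k\delta,(k+1)\delta[$ of size comparable to $\delta$ in every direction, obtaining a rotation per block that approximates the \emph{full} gradient there; the $d\GQ/dx_3$ estimate then comes from applying rigidity once more on the union of two consecutive (overlapping) blocks to compare the adjacent rotations, and the field $x_3\mapsto\GQ(x_3)$ is obtained by interpolation/regularisation of this discrete family. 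The two powers of $\delta$ in the third estimate arise from the block-comparison step (one $\delta^{-1}$ from the $|x_1\Ge_1+x_2\Ge_2|\sim\delta$ lever arm, one $\delta^{-1}$ from dividing by the block length $\delta$), not from a slice-wise Poincar\'e and a $|D_\delta|^{-1/2}$ normalisation as you suggest. You should replace the slice-wise 2D rigidity by this block-wise 3D rigidity; the rest of your outline (warping bounds, passage to $d{\cal V}/dx_3-\GQ\Ge_3$, $L$-independence by locality, and the $SO(3)$-projection/measurability remark) then goes through essentially as you describe.
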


\section{Preliminaries results}
Let $v$ be a deformation in $\D_\delta$. We set  $u=v-I_d$. We decompose $u$ as \eqref{FDec} and \eqref{FDec8} in the plate and we decompose the deformation $v$ as  \eqref{DecR} in the rod.
\subsection{A complement to the mid-surface bending.}
\noindent Let us set 
$$H^1_{\gamma_0}(\omega)=\{\varphi\in H^1(\omega)\; ;\; \varphi=0\; \hbox  { on } \gamma_0\}.$$
\vskip 1mm
\noindent We define the  function $\widetilde{\cal U}_3$ as the solution of the following variational problem :
\begin{equation}\label{defVtilde2}
\left\{\begin{aligned}
& \widetilde{\cal U}_3\in H^1_{\gamma_0}(\omega),\\
&\int_\omega\nabla\widetilde{\cal U}_3\nabla\varphi=\int_\omega(\GR-\GI_3)\Ge_\alpha\cdot\Ge_3 {\partial\varphi\over \partial x_\alpha},\\
&\forall\varphi\in H^1_{\gamma_0}(\omega)
\end{aligned}\right.
\end{equation} where $\GR$ appears in the decomposition \eqref{FDec8} of $u$.
Due to \eqref{87}-\eqref{88}, the function  $\widetilde{\cal U}_3$  belongs to $H^1_{\gamma_0}(\omega)\cap H^2(D)$ (remind that $D$ is the disc of radius 1 and center the origin $O$; and we assumed that $D\subset\subset \omega$). The function  $\widetilde{\cal U}_3$ satisfies  the  estimates:
\begin{equation}\label{4002}
\begin{aligned}
&||\widetilde{\cal U}_3||_{H^1(\omega)}\le {C\over \delta^{3/2}}\Gd(v,\Omega_{\delta}),\quad||{\cal U}_3-\widetilde{\cal U}_3||_{H^1(\omega)}\le {C\over \delta^{1/2}}\Gd(v,\Omega_{\delta}),\\
&||\widetilde{\cal U}_3||_{H^2(D)}\le {C\over \delta^{3/2}}\Gd(v,\Omega_{\delta}),\quad \Big\|{\partial\widetilde{\cal U}_3\over \partial x_\alpha}-(\GR-\GI_3)\Ge_\alpha\cdot\Ge_3\Big\|_{H^1(D)}\le {C\over \delta^{3/2}}\Gd(v,\Omega_{\delta}),\\
&|\widetilde {\cal U}_3(0,0)| \le   {C\over \delta^{3/2}}\Gd(v,\Omega_{\delta}).
\end{aligned}
\end{equation} The constants do not depend on $\delta$.

\subsection{A complement to the rod center-line displacement.}
Let ${\cal V}$ given by \eqref{DecR}, we consider  ${\cal W}(x_3)={\cal V}(x_3)-x_3\Ge_3=\ds{1\over |D_\delta|}\int_{D_\delta}u(x)dx_1dx_2$ the rod center-line displacement. From the above Theorem \ref{Theorem II.2.2.}, the estimate below holds true
\begin{equation}\label{WQ}
\Bigl\|{d{\cal W}\over dx_3}-\big(\GQ-\GI_3\big)\Ge_3\Big\|_{L^2(-\delta,L;\R^3)}\le {C\over \delta}\Gd(v, B_\delta).
\end{equation}
\noindent As in \cite{BGRod} we split the center line displacement ${\cal W}$ into two parts. The first one ${\cal W}^{(m)}$ stands for the main displacement of the rod which describes the displacement coming from the bending
and the second one for the stretching of the rod.
\begin{equation}\label{WBWS}
\begin{aligned}
\forall x_3\in [0,L],\qquad {\cal W}^{(m)}(x_3)&={\cal W}(0)+\int_0^{x_3}\big(\GQ(t)-\GI_3\big)\Ge_3 dt,\\
 {\cal W}^{(s)}(x_3)&={\cal W}(x_3)-{\cal W}^{(m)}(x_3).
\end{aligned}
\end{equation} 
 In the lemma below we give   estimates on  $ {\cal W}^{(s)}$ and $ {\cal W}^{(m)}$.
\begin{lemma}\label{lem33} We have
\begin{equation}\label{EstWS}
 ||{\cal W}^{(s)}||_{H^1(-\delta, L ; \R^3)}\le {C\over \delta}\Gd(v, B_\delta),
\end{equation}
 and 
\begin{equation}\label{dWB}
\big\|{\cal W}^{(m)}_{\alpha} -{\cal W}_{\alpha} (0)\big\|_{H^2(-\delta,L)} \le {C\over \delta^2} \Gd(v, B_\delta)+C|||\GQ(0)-\GI_3|||,
 \end{equation}
  \begin{equation}\label{dW3}
 \begin{aligned}
\bigl\| {\cal W}^{(m)}_{3}-{\cal W}^{(m)}_{3}(0) \Big\|_{H^1(-\delta,L)} & \le  {C\over \delta^4} \big[\Gd(v, B_\delta)\big]^2+C|\big(\GQ(0)-\GI_3\big)\Ge_3\cdot\Ge_3|,\\
\Bigl\|{d{\cal W}^{(m)}_{3}\over dx_3}\Big\|_{L^2(-\delta,\delta)} & \le  {C\over \delta^{5/2}} \big[\Gd(v, B_\delta)\big]^2+C\delta^{1/2}|\big(\GQ(0)-\GI_3\big)\Ge_3\cdot\Ge_3|.
 \end{aligned}
 \end{equation}
The constants do not depend on $\delta$.
 \end{lemma}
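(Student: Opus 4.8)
The plan is to exploit the definitions \eqref{WBWS} together with the estimates of Theorem \ref{Theorem II.2.2.} on the rod decomposition. First I would handle \eqref{EstWS}: by construction ${\cal W}^{(s)}(x_3)={\cal W}(x_3)-{\cal W}(0)-\int_0^{x_3}(\GQ(t)-\GI_3)\Ge_3\,dt$, so $\ds{d{\cal W}^{(s)}\over dx_3}={d{\cal W}\over dx_3}-(\GQ-\GI_3)\Ge_3$, which is exactly the quantity controlled in \eqref{WQ} by ${C\over\delta}\Gd(v,B_\delta)$. Since ${\cal W}^{(s)}(0)=0$, a Poincaré inequality on $]-\delta,L[$ (the constant depending only on $L$, not on $\delta$) promotes the $L^2$ bound on the derivative to the full $H^1$ bound, giving \eqref{EstWS}.

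For \eqref{dWB}, the two transverse components: from \eqref{WBWS} we have $\ds{d{\cal W}^{(m)}_\alpha\over dx_3}=(\GQ(x_3)-\GI_3)\Ge_3\cdot\Ge_\alpha$ and hence $\ds{d^2{\cal W}^{(m)}_\alpha\over dx_3^2}={d\GQ\over dx_3}\Ge_3\cdot\Ge_\alpha$, whose $L^2$-norm is bounded by ${C\over\delta^2}\Gd(v,B_\delta)$ via the third estimate in \eqref{EstmRod}. To control the first derivative at a point I would write $(\GQ(x_3)-\GI_3)\Ge_3\cdot\Ge_\alpha=(\GQ(0)-\GI_3)\Ge_3\cdot\Ge_\alpha+\int_0^{x_3}{d\GQ\over dt}\Ge_3\cdot\Ge_\alpha\,dt$, so the $L^2$-norm of the first derivative is bounded by $C|||\GQ(0)-\GI_3|||+{C\over\delta^2}\Gd(v,B_\delta)$ (the length $L$ absorbed in $C$); since ${\cal W}^{(m)}_\alpha(0)-{\cal W}_\alpha(0)=0$, another Poincaré step closes \eqref{dWB}. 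The only subtlety is to keep track of which constants are allowed to depend on $L$ and to use that $\GQ(x_3)\in SO(3)$ so that $|||\GQ-\GI_3|||$ is comparable to the relevant off-diagonal entries.

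The estimate \eqref{dW3} for the longitudinal component ${\cal W}^{(m)}_3$ is the delicate one and I expect it to be the main obstacle, because of the quadratic term $[\Gd(v,B_\delta)]^2$ and the extra powers of $\delta^{-1}$. Here $\ds{d{\cal W}^{(m)}_3\over dx_3}=(\GQ(x_3)-\GI_3)\Ge_3\cdot\Ge_3$, and since $\GQ(x_3)\in SO(3)$ the diagonal entry satisfies $(\GQ\Ge_3\cdot\Ge_3)-1=-{1\over2}|(\GQ-\GI_3)\Ge_3|^2+(\GQ(x_3)\Ge_3\cdot\Ge_3-1)$-type identity; more precisely, writing $\GQ\Ge_3=\Ge_3+a$ with $|\GQ\Ge_3|=1$ forces $2a\cdot\Ge_3+|a|^2=0$, i.e. $(\GQ-\GI_3)\Ge_3\cdot\Ge_3=-{1\over2}|(\GQ-\GI_3)\Ge_3|^2$. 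Thus the longitudinal derivative of ${\cal W}^{(m)}_3$ is pointwise $-{1\over2}|(\GQ-\GI_3)\Ge_3|^2$, a genuinely quadratic quantity. I would then split $(\GQ(x_3)-\GI_3)\Ge_3=(\GQ(0)-\GI_3)\Ge_3+\int_0^{x_3}{d\GQ\over dt}\Ge_3\,dt$, expand the square, and estimate: the cross and square terms involving the integral are controlled in $L^2(-\delta,L)$ using $\|{d\GQ\over dx_3}\|_{L^2}\le{C\over\delta^2}\Gd(v,B_\delta)$ together with a Sobolev embedding $H^1(-\delta,L)\hookrightarrow L^\infty$ (constant independent of $\delta$) to handle the product; this produces the ${C\over\delta^4}[\Gd(v,B_\delta)]^2$ term after integrating over the interval of length $O(L)$. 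The remaining piece is $|(\GQ(0)-\GI_3)\Ge_3\cdot\Ge_3|$, which appears linearly. For the second line of \eqref{dW3} one integrates only over $]-\delta,\delta[$, an interval of length $2\delta$, so the constant piece contributes $\delta^{1/2}|(\GQ(0)-\GI_3)\Ge_3\cdot\Ge_3|$ and the variable piece contributes ${C\over\delta^{5/2}}[\Gd(v,B_\delta)]^2$ (losing $\delta^{1/2}$ from the short interval but the quadratic structure already carries $\delta^{-4}$, and one factor $\delta^{1/2}$ is regained). Finally, $H^1$ control of ${\cal W}^{(m)}_3-{\cal W}^{(m)}_3(0)$ follows from the derivative bound by Poincaré on $]-\delta,L[$. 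The bookkeeping of powers of $\delta$ in the quadratic terms, and the correct use of the $SO(3)$-identity to expose the square, is where the real work lies.
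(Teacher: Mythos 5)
Your plan matches the paper's proof step for step: the Poincar\'e argument for ${\cal W}^{(s)}$ via \eqref{WQ}, the use of $\eqref{EstmRod}_3$ and ${\cal W}^{(m)}(0)={\cal W}(0)$ for the transverse components, and for ${\cal W}^{(m)}_3$ the crucial $SO(3)$-identity $(\GQ-\GI_3)\Ge_3\cdot\Ge_3=-\tfrac12|(\GQ-\GI_3)\Ge_3|^2$, followed by the expansion about $\GQ(0)$, $L^4$-type control of the quadratic piece, and the extra gain on the short interval $]-\delta,\delta[$ — the paper obtains the $L^4$ bounds from the explicit one-dimensional inequalities \eqref{aboveequality1} rather than Sobolev embedding, but that is an interchangeable device. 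The only wrinkle is in your parenthetical accounting for the second line of \eqref{dW3}: the quadratic piece actually gains a full $\delta^{3/2}$ on $]-\delta,\delta[$ (both the $L^\infty$/$L^4$ estimate and the $L^2$ estimate improve since $\GQ-\GQ(0)$ vanishes at $0$), taking $\delta^{-4}$ to $\delta^{-5/2}$, and the cross term is absorbed by Young's inequality into $\delta^{-5/2}[\Gd(v,B_\delta)]^2+\delta^{1/2}|(\GQ(0)-\GI_3)\Ge_3\cdot\Ge_3|$ — you state the correct exponents but the explanation of where the $\delta^{3/2}$ comes from needs tightening.
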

\begin{proof}  Taking into account the facts that ${\cal W}^{(s)}(0)=0$ and $\ds{d{\cal W}^{(s)}\over dx_3}={d{\cal W}\over dx_3}-\big(\GQ-\GI_3\big)\Ge_3$,  the estimate \eqref{WQ} leads to \eqref{EstWS}.
From the third estimate in \eqref{EstmRod} we obtain
 \begin{equation}\label{EstmWQ}
||\GQ-\GQ(0)||_{L^2(-\delta,L;\R^{3\times 3})}  \le {C\over \delta^2} \Gd(v, B_\delta),
\end{equation} Due to the definition \eqref{WBWS} of  ${\cal W}^{(m)}$ and  estimate $\eqref{EstmRod}_3$ we  get 
$$\Big\| {d{\cal W}^{(m)}\over dx_3}\Big\|_{H^1( -\delta,L)}\le {C\over \delta^2} \Gd(v, B_\delta)+C|||\GQ(0)-\GI_3|||$$ and thus \eqref{dWB}.
A straightforward calculation gives 
\begin{equation}\label{dW31}
{d{\cal W}^{(m)}_{3}\over dx_3}=\big(\GQ-\GI_3\big)\Ge_3\cdot\Ge_3=-{1\over 2}\big|(\GQ-\GI_3)\Ge_3\big|^2.
\end{equation} Besides we have
\begin{equation}\label{aboveequality}
{d\over dx_3}\big((\GQ-\GI_3)\Ge_3\big)={d\GQ\over dx_3}\Ge_3.
\end{equation} We recall that  for $\phi\in H^1(0,L)$ and  $\eta \in ]0,L[$ we have
\begin{equation}\label{aboveequality1}
\int_0^\eta|\phi(t)-\phi(0)|^2dt\le {\eta^2\over 2}\Big\|{d\phi\over dt}\Big\|^2_{L^2(0,L)},\qquad \int_0^\eta|\phi(t)-\phi(0)|^4dt\le {\eta^3\over 3}\Big\|{d\phi\over dt}\Big\|^4_{L^2(0,L)}.
\end{equation}
Then, the estimates $\eqref{EstmRod}_3$, $\eqref{aboveequality1}_1$ and the equality \eqref{aboveequality} give
 \begin{equation}\label{314}
 \begin{aligned}
 \big\|(\GQ-\GI_3)\Ge_3-(\GQ(0)-\GI_3)\Ge_3 \big\|_{L^2(-\delta,L;\R^3)}&\le {C\over \delta^2} \Gd(v, B_\delta),\\
  \big\|(\GQ-\GI_3)\Ge_3-(\GQ(0)-\GI_3)\Ge_3 \big\|_{L^2(-\delta,\delta;\R^3)}&\le {C\over \delta} \Gd(v, B_\delta).
\end{aligned}\end{equation} Now,  again $\eqref{EstmRod}_3$ and $\eqref{aboveequality1}_2$ lead to
 \begin{equation}\label{320}
 \begin{aligned}
 \big\|(\GQ-\GI_3)\Ge_3-(\GQ(0)-\GI_3)\Ge_3 \big\|_{L^4(-\delta,L;\R^3)}&\le {C \over \delta^2}\Gd(v, B_\delta),\\
  \big\|(\GQ-\GI_3)\Ge_3-(\GQ(0)-\GI_3)\Ge_3 \big\|_{L^4(-\delta,\delta;\R^3)}&\le {C \over \delta^{5/4}} \Gd(v, B_\delta).
\end{aligned} \end{equation}
Finally, from \eqref{dW31} and the above inequality we obtain  \eqref{dW3}.
\end{proof} 
\subsection{First estimates in the junction area.}
\begin{lemma}\label{lemme2} We have the following estimate on $\GQ(0)-\GI_3$:
\begin{equation}\label{Q(0)First}
\begin{aligned}
&\big|\big(\GQ(0)-\GI_3\big)\Ge_3\cdot\Ge_3\big|\le {C\over \delta^{3/2}}\big(\GG_s(u,\Omega_\delta)+\Gd(v,B_{\delta})\big),\\
&|||\GQ(0)-\GI_3|||\le  {C\over \delta^{7/4}}\GG_s(u,\Omega_\delta)+ {C\over \delta^{3/2}}\Gd(v,B_{\delta})
\end{aligned}\end{equation} and those about ${\cal W}(0)$ 
\begin{equation}\label{W1(0)}
|{\cal W}_\alpha(0)|\le {C\over\delta^{3/4}}\GG_s(u,\Omega_\delta)+{C\over \delta^{1/2}}\Gd(v,B_{\delta})
\end{equation}
and
\begin{equation}\label{W3(0)-V3(0,0)}
\begin{aligned}
&|{\cal W}_3(0)-\widetilde {\cal U}_3(0,0)| \le   {C\over \delta^{2}} \big[\Gd(v,B_{\delta})\big]^2+{C\over \delta^{1/2}}\big(\Gd(v,B_{\delta})+\GG_s(u,\Omega_\delta)\big)+{C\over\delta}\Gd(v,\Omega_\delta),\\
&|{\cal W}_3(0)| \le  {C\over \delta^{3/2}}\Gd(v,\Omega_{\delta})+{C\over \delta^{2}} \big[\Gd(v,B_{\delta})\big]^2+{C\over \delta^{1/2}}\big(\Gd(v,B_{\delta})+\GG_s(u,\Omega_\delta)\big).
\end{aligned}\end{equation}
The constants  are independent of $\delta$.
\end{lemma}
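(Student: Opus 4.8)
The plan is to match the plate decomposition and the rod decomposition across the junction $C_\delta=D_\delta\times\,]-\delta,\delta[$, where both expressions \eqref{FDec8} and \eqref{DecR} for $u=v-I_d$ hold simultaneously. On $C_\delta$ we have, on the one hand, $u(x)={\cal U}(x_1,x_2)+x_3(\GR-\GI_3)\Ge_3+\overline{\overline u}(x)$ and, on the other, $u(x)={\cal W}(x_3)+(\GQ(x_3)-\GI_3)(x_1\Ge_1+x_2\Ge_2)+\overline{\overline v}(x)$. The first step is to extract the rigid parts from each side by averaging over the right variables. Integrating the difference of the two representations over the disc $D_\delta$ in $(x_1,x_2)$ (and possibly over $x_3$ as well) kills the rod warping $\overline{\overline v}$ up to its $L^2$-norm, kills the term linear in $(x_1,x_2)$, and leaves ${\cal W}(x_3)$ compared to an average of ${\cal U}$, of $x_3(\GR-\GI_3)\Ge_3$, and of $\overline{\overline u}$. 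This should already yield a first control of ${\cal W}(0)$ (using a trace/Poincaré argument in $x_3$ over $]-\delta,\delta[$ together with \eqref{EstWS}) in terms of the average of ${\cal U}$ and of $\GR-\GI_3$ on $D_\delta$, hence via \eqref{88} and \eqref{4002} in terms of $\Gd(v,\Omega_\delta)$ and $\GG_s(u,\Omega_\delta)$; the scaling $|D_\delta|\sim\delta^2$ and $|C_\delta|\sim\delta^4$ is what produces the negative powers of $\delta$ claimed.

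Next, to reach $\GQ(0)-\GI_3$ one differentiates (or takes first moments in $(x_1,x_2)$ of) the matching identity: multiplying the equality of the two representations by $x_\beta$ and integrating over $D_\delta$ isolates $(\GQ(x_3)-\GI_3)\Ge_\beta$ against the corresponding moment of the plate side, which involves $\partial{\cal U}/\partial x_\beta$, the matrix $\GR-\GI_3$, and $\overline{\overline u}$; here the plate estimates $\eqref{87}_3$ for $\partial{\cal U}/\partial x_\alpha-(\GR-\GI_3)\Ge_\alpha$ and \eqref{88} enter. Since $\GQ$ is close to $\GQ(0)$ on $]-\delta,\delta[$ by \eqref{EstmRod}$_3$ (this is \eqref{EstmWQ} localized, giving the $\delta$-scaled bound $\|\GQ-\GQ(0)\|_{L^2(-\delta,\delta)}\le C\delta^{-1}\Gd(v,B_\delta)$), averaging the moment identity over $x_3\in\,]-\delta,\delta[$ transfers the control to $\GQ(0)-\GI_3$. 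The two different exponents in \eqref{Q(0)First} reflect that the $\Ge_3\cdot\Ge_3$ component is quadratically small — one uses $(\GQ(0)-\GI_3)\Ge_3\cdot\Ge_3=-\tfrac12|(\GQ(0)-\GI_3)\Ge_3|^2$ as in \eqref{dW31}, so it is the square of a quantity already bounded by $\delta^{-3/4}(\cdots)$, giving the $\delta^{-3/2}$ bound, while the full Frobenius norm keeps the worse $\delta^{-7/4}$ coming directly from $\GG_s(u,\Omega_\delta)$ through the plate moment estimates (the $H^2(D)$-regularity from \eqref{4002} is what makes traces on the small disc affordable).

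Finally, for \eqref{W3(0)-V3(0,0)} one looks at the third component of the matching identity more carefully: the natural object appearing on the plate side is not ${\cal U}_3$ itself but the corrected bending $\widetilde{\cal U}_3$ introduced in \eqref{defVtilde2}, precisely because $\widetilde{\cal U}_3$ absorbs the $(\GR-\GI_3)\Ge_\alpha\cdot\Ge_3$ contribution; comparing ${\cal W}_3(0)$ with $\widetilde{\cal U}_3(0,0)$ then uses the $H^2(D)$ estimate on $\widetilde{\cal U}_3$ and the last line of \eqref{4002} for $\widetilde{\cal U}_3(0,0)$. The quadratic term $\delta^{-2}[\Gd(v,B_\delta)]^2$ enters through ${\cal W}^{(s)}_3$ and the stretching part governed by \eqref{dW3}, i.e. through the relation $d{\cal W}^{(m)}_3/dx_3=-\tfrac12|(\GQ-\GI_3)\Ge_3|^2$, integrated over $]-\delta,\delta[$; the second inequality in \eqref{W3(0)-V3(0,0)} is just the first combined with $|\widetilde{\cal U}_3(0,0)|\le C\delta^{-3/2}\Gd(v,\Omega_\delta)$. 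I expect the main obstacle to be the bookkeeping of which norm ($\GG_s$ versus $\Gd$) and which power of $\delta$ is affordable at each place: the trace of $\overline{\overline u}$ (resp. $\overline{\overline v}$) on the tiny set $C_\delta$ must be estimated by interpolation between $\eqref{87}_1$ and $\eqref{87}_4$ (resp. $\eqref{EstmRod}_{1,2}$), and it is exactly the sharpness of these trace estimates on a domain of size $\delta$ in all three directions — not just thin in one — that forces the careful use of the $H^2(D)$ bounds in \eqref{4002} and is the delicate part of the argument.
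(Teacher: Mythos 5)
Your overall architecture is right -- match the two decompositions on $C_\delta$, average over the disc to reach ${\cal W}(0)$, and pivot through $\widetilde{\cal U}_3$ (with its $H^2(D)$ control and the estimate $|\widetilde{\cal U}_3(0,0)|\le C\delta^{-3/2}\Gd(v,\Omega_\delta)$) together with $\eqref{dW31}$/$\eqref{dW3}$ for the quadratic term in $\eqref{W3(0)-V3(0,0)}$. But there is a concrete error in your explanation of why the $\Ge_3\cdot\Ge_3$ component in $\eqref{Q(0)First}$ enjoys the better exponent $\delta^{-3/2}$.

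You argue that since $\GQ(0)\in SO(3)$ one has $(\GQ(0)-\GI_3)\Ge_3\cdot\Ge_3=-\tfrac12\big|(\GQ(0)-\GI_3)\Ge_3\big|^2$, so the diagonal entry is ``the square of something already bounded by $\delta^{-3/4}(\cdots)$.'' This identity is true, but it yields a bound of the form $C\big(\delta^{-3/4}(\cdots)\big)^2$, which is quadratic in $\GG_s(u,\Omega_\delta)$ and $\Gd(v,B_\delta)$ and is not the linear bound $C\delta^{-3/2}\big(\GG_s(u,\Omega_\delta)+\Gd(v,B_\delta)\big)$ claimed in $\eqref{Q(0)First}$; the two are genuinely different estimates and the downstream steps (Lemma \ref{lemme41}, the Korn inequality, the force estimates in Theorem \ref{TH53}) actually need the linear one. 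The correct mechanism is structural: from $\|\nabla u-\GA_{\cal R}\|_{L^2}\le C\GG_s(u,\Omega_\delta)$ (last line of $\eqref{3.7}$) and $\|\nabla v-\GQ\|_{L^2}\le C\Gd(v,B_\delta)$ (last line of $\eqref{EstmRod}$), subtracting gives $\|\GQ-\GI_3-\GA_{\cal R}\|_{L^2(C_\delta)}\le C\big(\GG_s+\Gd\big)$, and after freezing $\GQ$ at $x_3=0$ one lands on $\eqref{Q(0)-R}$. Since $\GA_{\cal R}$ is \emph{antisymmetric}, its $(3,3)$ entry vanishes, so the $\Ge_3\cdot\Ge_3$ component of $\GQ(0)-\GI_3$ is controlled directly by $\eqref{Q(0)-R}$ with no contribution from ${\cal R}$ at all; dividing by $|D_\delta|^{1/2}\sim\delta$ gives the $\delta^{-3/2}$ linear bound. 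The worse exponent $\delta^{-7/4}$ in the full Frobenius estimate is then exactly the price of bringing $\|\GA_{\cal R}\|_{L^2(D_\delta)}$ back in, which costs a factor $\delta^{-3/4}\GG_s(u,\Omega_\delta)$ via the Sobolev embedding $\eqref{RDeps}$. Your ``moment'' route for isolating $\GQ(0)-\GI_3$ is workable in principle, but the paper's direct subtraction of the two gradient-closeness estimates is both shorter and makes the antisymmetry point transparent; without that observation you cannot recover the stated linear form of the first line of $\eqref{Q(0)First}$.

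Your Steps 2 and 3 are essentially aligned with the paper: averaging the matching identity over $C_\delta$ (which kills the warping and the linear-in-$(x_1,x_2)$ term and yields ${\cal M}_{D_\delta}({\cal U})={\cal M}_{I_\delta}({\cal W})$), the $L^8$-embedding trick to bound $\|{\cal U}_\alpha\|_{L^2(D_\delta)}$, the auxiliary function $y_\alpha(x_3)={\cal W}_\alpha(x_3)-x_3(\GQ(0)-\GI_3)\Ge_3\cdot\Ge_\alpha$ for $\eqref{W1(0)}$, and for $\eqref{W3(0)-V3(0,0)}$ the double Poincar\'e--Wirtinger in the disc (using the $H^2(D)$ bound from $\eqref{4002}$) to compare ${\cal M}_{D_\delta}(\widetilde{\cal U}_3)$ with $\widetilde{\cal U}_3(0,0)$, plus $\eqref{dW33}$ for ${\cal M}_{I_\delta}({\cal W}^{(m)}_3)-{\cal W}_3(0)$. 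Only Step 1 needs repair.
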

\begin{proof} 
\noindent{\it Step 1. We prove the estimate on $\GQ(0)-\GI_3$.} 
We consider the last inequalities in Theorems \ref{Theorem 3.3.} and  \ref{Theorem II.2.2.}. They give
\begin{equation}\label{Q-R}
\bigl\|\GQ-\GI_3-\GA_{\cal R} \big\|_{ L^2(C_{\delta}; \R^9)}\le C\big(\GG_s(u,\Omega_\delta)+\Gd(v,B_{\delta})\big).
\end{equation}
Now, from the third estimate in \eqref{EstmRod}, we get
\begin{equation*}\label{V=WQ(0)}
\big\|\GQ-\GQ(0)\big\|_{L^2(-\delta,\delta;\R^{3\times 3})} \le C \delta \Bigl\|{d\GQ\over dx_3}\Big\|_{L^2(-\delta, \delta ;\R^{3\times 3})}\le {C \over\delta} \Gd(v,B_{\delta}).
\end{equation*} Hence
\begin{equation}\label{Q(0)-R}
\bigl\|\GQ(0)-\GI_3-\GA_{\cal R}\big\|^2_{ L^2(D_\delta; \R^{3\times 3})}\le {C\over \delta}\Big(\big[\GG_s(u,\Omega_\delta)\big]^2+\big[\Gd(v,B_{\delta})\big]^2\Big).
\end{equation} We recall that the matrix $\GA_{\cal R}$ is antisymmetric, then \eqref{Q(0)-R} leads to the first estimate in \eqref{Q(0)First}. 
Due to  \eqref{Estm} we have
\begin{equation}\label{RDeps}
||{\cal R}||^4_{L^2(D_\delta;\R^3)}\le C\delta^{3} ||{\cal R}||^4_{L^8(D_\delta;\R^3)}\le C\delta^{3}||{\cal R}||^4_{H^1(\omega;\R^3)}\le {C\over \delta^{3}}\big[\GG_s(u,\Omega_\delta)\big]^4.
\end{equation} 
Then, using  the  above estimate   and \eqref{Q(0)-R} we deduce the second estimate in \eqref{Q(0)First}.
\vskip 1mm
\noindent {\it Step 2. We prove the estimate \eqref{W1(0)} on ${\cal W}_\alpha(0)$.} 
\vskip 1mm
\noindent The two decompositions of $u=v-I_d$ (\eqref{FDec} and \eqref{DecR}) give, for a.e. $x\in C_{\delta}$
\begin{equation}\label{DecC}
\begin{aligned}
&{\cal U} (x_1,x_2)+x_3{\cal R}(x_1,x_2)\land\Ge_3+\overline{u}(x)\\
=&{\cal W} (x_3)+(\GQ(x_3)-\GI_3)(x_1\Ge_1+x_2\Ge_2)+\overline{\overline{v}}(x).
\end{aligned}\end{equation} 
\noindent 
Taking the averages on the cylinder $C_{\delta}$ of the terms in this equality \eqref{DecC} give
\begin{equation}\label{V-WFirst}
{\cal M}_{D_{\delta}}\big({\cal U}\big)={1\over |D_{\delta}|}\int_{D_\delta}{\cal U}(x_1,x_2) dx_1dx_2={\cal M}_{I_{\delta}}\big({\cal W}\big)={1\over 2\delta}\int_{-\delta}^\delta{\cal W}(x_3) dx_3.
\end{equation}
Besides, proceeding as for ${\cal R}$ in \eqref{RDeps} and from \eqref{Estm} we have
\begin{equation*}
||{\cal U}_\alpha||_{L^2(D_\delta )} \le C\delta^{1/4}\GG_s(u,\Omega_\delta).
\end{equation*} From this estimate we get
\begin{equation}\label{Walpha}
|{\cal M}_{I_{\delta}}\big({\cal W}_\alpha\big)|=|{\cal M}_{D_{\delta}}\big({\cal U}_\alpha\big)|\le {C\over\delta^{3/4}} \GG_s(u,\Omega_\delta).
\end{equation} We set $\ds y_\alpha(x_3)={\cal W}_\alpha(x_3)-x_3(\GQ(0)-\GI_3)\Ge_3\cdot \Ge_\alpha$. The  estimates \eqref{WQ}  and \eqref{314} lead to
\begin{equation*}
\Bigl\|{dy_\alpha \over dx_3}\Big\|_{L^2(-\delta, \delta)}\le {C\over \delta}\Gd(v,B_{\delta})
\end{equation*}  which in turn implies
\begin{equation*}
\bigl\|y_\alpha-y_\alpha(0)\big\|_{L^2(-\delta,\delta)}\le {C}\Gd(v,B_{\delta}).
\end{equation*} Taking the average, it yields
\begin{equation}\label{yalpha-yalpha(0)}
|{\cal M}_{I_{\delta}}\big({\cal W}_\alpha\big)-{\cal W}_\alpha(0)|
\le  {C\over \delta^{1/2}}\Gd(v,B_{\delta}).
\end{equation}
Finally, from \eqref{Walpha} and \eqref{yalpha-yalpha(0)}  we obtain \eqref{W1(0)}.
\vskip 1mm
\noindent {\it Step 3. We prove the estimate on ${\cal W}_3(0)$.} Using \eqref{4002} we deduce that
\begin{equation}\label{DefU3}
\begin{aligned}
||{\cal U}_3-\widetilde{\cal U}_3||_{L^2(D_\delta)} & \le C\delta^{1/2}||{\cal U}_3-\widetilde{\cal U}_3||_{L^4(\omega)}\\
& \le C\delta^{1/2} ||{\cal U}_3-\widetilde{\cal U}_3||_{H^1(\omega)}\le C\Gd(v,\Omega_\delta).
\end{aligned}\end{equation} 
Then we replace ${\cal U}_3$ with $\widetilde{\cal U}_3$ and ${\cal W}_3$ with ${\cal W}^{(m)}_{3}$ in \eqref{V-WFirst}. Taking into account  \eqref{EstWS} we obtain
\begin{equation}\label{V-WSec}
|{\cal M}_{D_{\delta}}\big(\widetilde{\cal U}_3\big)-{\cal M}_{I_{\delta}}\big({\cal W}^{(m)}_{3}\big)|\le{C\over \delta}\Gd(v,\Omega_\delta)+{C\over \delta^{1/2}}\Gd(v, B_\delta).
\end{equation}
We carry on by comparing ${\cal M}_{D_{\delta}}\big(\widetilde{\cal U}_3\big)$ with $\widetilde{\cal U}_3(0,0)$.  Let us set
$${\Gr}_\alpha={1\over \pi\delta^2}\int_{D_\delta }\big(\GR(x_1,x_2)-\GI_3\big)\Ge_\alpha\cdot\Ge_3\,dx_1dx_2$$ and consider the function $\Psi(x_1,x_2)=\widetilde{\cal U}_3(x_1,x_2)-{\cal M}_{D_{\delta}}\big(\widetilde{\cal U}_3\big)-x_1\Gr_2-x_2\Gr_1$. Due to  \eqref{4002} we first obtain
\begin{equation}\label{Psi2}
\Big\|{\partial^2\Psi\over \partial x_\alpha\partial x_\beta}\Big\|_{L^2(D_\delta ,\R^3)}\le {C\over \delta^{3/2}}\Gd(v,\Omega_\delta).
\end{equation}
Then, applying twice the Poincar-Wirtinger inequality in the disc $D_\delta $ and using  \eqref{3.7} and the fourth estimate in \eqref{4002} lead to
\begin{equation}\label{Psi}
||\nabla\Psi||^2_{L^2(D_\delta ,\R^6)}\le {C\over \delta}\big[\Gd(v,\Omega_\delta)\big]^2,\qquad 
||\Psi||^2_{L^2(D_\delta ,\R^3)}\le C\delta\big[\Gd(v,\Omega_\delta)\big]^2.
\end{equation} From the above inequalities \eqref{Psi2} and \eqref{Psi} we deduce that
\begin{equation*}
\begin{aligned}
||\Psi||_{L^\infty(D_\delta ,\R^3)}\le {C \over \delta^{1/2}}\Gd(v,\Omega_\delta)\quad
\Longrightarrow\quad|\Psi(0,0)|= |\widetilde{\cal U}_3(0,0)-{\cal M}_{D_{\delta}}\big(\widetilde{\cal U}_3\big)|\le {C \over \delta^{1/2}}\Gd(v,\Omega_\delta).
\end{aligned}\end{equation*} From this last estimate and \eqref{V-WSec} we obtain
\begin{equation}\label{V(0)-WSec}
| \widetilde{\cal U}_3(0,0)-{\cal M}_{I_{\delta}}\big({\cal W}^{(m)}_{3}\big)|\le{C\over \delta}\Gd(v,\Omega_\delta)+ {C\over \delta^{1/2}}\Gd(v,B_{\delta}).
\end{equation}
Then using the second estimate in \eqref{dW3} and  \eqref{Q(0)First} we have
  \begin{equation}\label{dW33}
 \Bigl\|{d{\cal W}^{(m)}_{3}\over dx_3}\Big\|_{L^2(-\delta,\delta)}  \le  {C\over \delta^{5/2}} \big[\Gd(v,B_{\delta})\big]^2+{C\over \delta }\big(\GG_s(u,\Omega_\delta)+ \Gd(v,B_{\delta})\big).
 \end{equation} Finally, recalling that ${\cal W}_3(0)={\cal W}^{(m)}_{3}(0)$, the above inequality leads to
\begin{equation*}
|{\cal M}_{I_{\delta}}\big({\cal W}^{(m)}_{3}\big)-{\cal W}_3(0)|\le C\delta^{1/2}\Bigl\|{d{\cal W}^{(m)}_{3}\over dx_3}\Big\|_{L^2(-\delta,\delta)}\le  {C\over \delta^{2}} \big[\Gd(v,B_{\delta})\big]^2+{C\over \delta^{1/2} }\big(\GG_s(u,\Omega_\delta)+ \Gd(v,B_{\delta})\big)
\end{equation*}  which in turn with \eqref{V(0)-WSec} and \eqref{4002} lead to \eqref{W3(0)-V3(0,0)}.
\end{proof}
\subsection{Global estimates of $u$: Korn's type inequality.}
 
 \noindent Now, we give the last estimates of the displacement $u=v-I_d$ in the rod $B_{\delta}$.

\begin{lemma}\label{lemme41} For any deformation $v$ in $\D_{\delta}$ we have the following  inequalities for the displacement $u=v-I_d$ in the rod $B_{\delta}$:
\begin{equation}\label{KoP1}
\begin{aligned}
&||u-{\cal W}||_{L^2(B_\delta;\R^3)}\le C\big(\Gd(v, B_\delta)+  \delta^{1/4}\GG_s(u,\Omega_\delta)\big),\\
&\big\|{\cal W}_{\alpha} \big\|_{L^2(-\delta,L)}+\big\|{\cal W}^{(m)}_{\alpha} \big\|_{L^2(-\delta,L)} \le C\Big({\Gd(v, B_\delta)\over \delta^2} +{\GG_s(u,\Omega_\delta) \over \delta^{7/4}}\Big),\\
& \big\|{\cal W}_{3} \big\|_{L^2(-\delta ,L)}+ \big\|{\cal W}^{(m)}_{3} \big\|_{L^2(-\delta ,L)}\le C{ \big[\Gd(v, B_\delta)\big]^2\over \delta^4}+{C\over \delta^{3/2}}\big[\GG_s(u,\Omega_\delta)+\Gd(v,\Omega_{\delta})+\Gd(v,B_{\delta})\big].
\end{aligned}
\end{equation} The constants do not depend   on $\delta$.
\end{lemma}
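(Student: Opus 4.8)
The plan is to obtain all three estimates by combining the rod decomposition \eqref{DecR}, the splitting \eqref{WBWS}, the estimates of Lemma \ref{lem33} and the junction estimates of Lemma \ref{lemme2}; the only real work will be the bookkeeping of powers of $\delta$.

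\textbf{First inequality.} I would start from the identity
\begin{equation*}
u(x)-{\cal W}(x_3)=\big(\GQ(x_3)-\GI_3\big)\big(x_1\Ge_1+x_2\Ge_2\big)+\overline{\overline{v}}(x),\qquad x\in B_\delta,
\end{equation*}
which follows from \eqref{DecR} and ${\cal W}={\cal V}-x_3\Ge_3$. The warping is controlled by $\|\overline{\overline{v}}\|_{L^2(B_\delta;\R^3)}\le C\delta\,\Gd(v,B_\delta)$ (first line of \eqref{EstmRod}). For the rotation term, since the cross-section of $B_\delta$ has radius $\delta$ one has $\int_{D_\delta}(x_1^2+x_2^2)\,dx_1dx_2\le C\delta^4$, so that $\|(\GQ-\GI_3)(x_1\Ge_1+x_2\Ge_2)\|_{L^2(B_\delta;\R^3)}\le C\delta^2\|\GQ-\GI_3\|_{L^2(-\delta,L;\R^{3\times 3})}$. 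Writing $\GQ-\GI_3=(\GQ-\GQ(0))+(\GQ(0)-\GI_3)$, the first part is bounded by $\frac{C}{\delta^2}\Gd(v,B_\delta)$ using \eqref{EstmWQ}, and the ($x_3$-constant) second part by $\frac{C}{\delta^{7/4}}\GG_s(u,\Omega_\delta)+\frac{C}{\delta^{3/2}}\Gd(v,B_\delta)$ using the second line of \eqref{Q(0)First}. Multiplying by $\delta^2$ and adding the warping bound gives the first line of \eqref{KoP1}.

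\textbf{Tangential and axial components.} For $\alpha\in\{1,2\}$ I would decompose, using \eqref{WBWS},
\begin{equation*}
{\cal W}_\alpha=\big({\cal W}^{(m)}_\alpha-{\cal W}_\alpha(0)\big)+{\cal W}_\alpha(0)+{\cal W}^{(s)}_\alpha,\qquad {\cal W}^{(m)}_\alpha=\big({\cal W}^{(m)}_\alpha-{\cal W}_\alpha(0)\big)+{\cal W}_\alpha(0),
\end{equation*}
and bound: $\|{\cal W}^{(m)}_\alpha-{\cal W}_\alpha(0)\|_{L^2(-\delta,L)}$ by \eqref{dWB} together with the second line of \eqref{Q(0)First}; the constant $|{\cal W}_\alpha(0)|$ by \eqref{W1(0)}; and $\|{\cal W}^{(s)}_\alpha\|_{L^2(-\delta,L)}\le\frac{C}{\delta}\Gd(v,B_\delta)$ by \eqref{EstWS}. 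Keeping only the dominant powers of $\delta$ leaves $\frac{C}{\delta^2}\Gd(v,B_\delta)+\frac{C}{\delta^{7/4}}\GG_s(u,\Omega_\delta)$, which is the second line of \eqref{KoP1}. For the axial components the argument is identical with $\alpha$ replaced by $3$, noting that ${\cal W}^{(m)}_3(0)={\cal W}_3(0)$: one uses the first estimate of \eqref{dW3} with the first line of \eqref{Q(0)First} for $\|{\cal W}^{(m)}_3-{\cal W}_3(0)\|_{L^2(-\delta,L)}$, the second line of \eqref{W3(0)-V3(0,0)} for $|{\cal W}_3(0)|$, and \eqref{EstWS} for $\|{\cal W}^{(s)}_3\|_{L^2(-\delta,L)}$; collecting the dominant terms yields the third line of \eqref{KoP1}.

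\textbf{Main difficulty.} I expect no conceptual obstacle here: all the ingredients are already contained in Lemmas \ref{lem33} and \ref{lemme2} and in Theorem \ref{Theorem II.2.2.}, and the proof is essentially an assembly of them. The one place that requires care is the $\delta$-accounting — in particular checking in the first inequality that the factor $\delta^2$ coming from the cross-sectional moment of inertia exactly absorbs the $\delta^{-2}$ and $\delta^{-7/4}$ singularities of $\|\GQ-\GI_3\|_{L^2}$, and, throughout, that the various sub-dominant powers of $\delta$ (using that $\delta$ stays bounded) really do get absorbed into the two or three terms displayed in \eqref{KoP1}.
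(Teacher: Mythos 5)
Your proposal is correct and follows essentially the same route as the paper: for the first inequality the paper also combines \eqref{EstmWQ} and \eqref{Q(0)First} into the bound \eqref{EstQ-I} on $\|\GQ-\GI_3\|_{L^2}$ and then invokes \eqref{EstmRod}, and for the second and third inequalities it likewise assembles \eqref{dWB} (resp.\ \eqref{dW3}), \eqref{Q(0)First}, \eqref{W1(0)} (resp.\ \eqref{W3(0)-V3(0,0)}) and \eqref{EstWS} through the splitting ${\cal W}={\cal W}^{(m)}+{\cal W}^{(s)}$. The only cosmetic difference is that you make the pointwise identity $u-{\cal W}=(\GQ-\GI_3)(x_1\Ge_1+x_2\Ge_2)+\overline{\overline{v}}$ and the cross-sectional moment computation explicit, which the paper leaves implicit.
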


\begin{proof} 
From \eqref{EstmWQ} and \eqref{Q(0)First} we get 
\begin{equation}\label{EstQ-I}
\big\|\GQ-\GI_3\big\|_{L^2(-\delta,L;\R^{3\times 3})}\le C\Big({ \Gd(v, B_\delta)
\over \delta^2}+ {\GG_s(u,\Omega_\delta)\over \delta^{7/4}}\Big).
\end{equation} Then, from \eqref{EstmRod} and the above inequality  we deduce that
\begin{equation}\label{u-W}
||u-{\cal W}||_{L^2(B_\delta;\R^3)}\le C\big(\Gd(v, B_\delta)+   \delta^{1/4}\GG_s(u,\Omega_\delta)\big).
\end{equation}
From \eqref{dWB} again  \eqref{Q(0)First} and \eqref{W1(0)} we obtain
\begin{equation}\label{dWBbis}
\big\|{\cal W}^{(m)}_{\alpha} \big\|_{H^1(-\delta,L)} \le {C\over \delta^2} \Gd(v, B_\delta)+{C\over \delta^{7/4}}\GG_s(u,\Omega_\delta).
 \end{equation}
 Then since ${\cal W}={\cal W}^{(m)}+{\cal W}^{(s)}$,  \eqref{EstWS}  and \eqref{dWBbis} give the second estimate in \eqref{KoP1}.
\vskip 1mm
\noindent  From \eqref{dW3} and \eqref{Q(0)First} we deduce that
\begin{equation*}
 \Big\|{d{\cal W}^{(m)}_{3}\over dx_3}\Big\|_{L^2(-\delta ,L)}\le  {C \over \delta^4} \big[\Gd(v, B_\delta)\big]^2+{C\over \delta^{3/2}}\big[\GG_s(u,\Omega_\delta)+\Gd(v, B_\delta)\big].\end{equation*}
which in turn using \eqref{W3(0)-V3(0,0)} lead to
\begin{equation*}
\begin{aligned}
 \big\|{\cal W}^{(m)}_{3} \big\|_{L^2(-\delta ,L)}\le {C \over \delta^4} \big[\Gd(v, B_\delta)\big]^2+{C\over \delta^{3/2}}\big[\GG_s(u,\Omega_\delta)+\Gd(v, \Omega_\delta)+\Gd(v, B_\delta)\big]
\end{aligned}\end{equation*} and then due to \eqref{EstWS} we get the last estimate in \eqref{KoP1}.
\end{proof} 

\begin{cor}\label{CorKorn} For any deformation $v$ in $\D_{\delta}$ we have the following Korn's type inequality for the displacement $u=v-I_d$ in the rod $B_{\delta}$:
\begin{equation}\label{KoP}
\begin{aligned}
&\big\|\nabla u\big\|_{L^2(B_\delta;\R^{3\times 3})}\le {C\over \delta} \Gd(v, B_\delta)
+ {C \over \delta^{3/4}}\GG_s(u,\Omega_\delta),\\
&||u_\alpha||_{L^2(B_\delta)}\le{C \over \delta} \Gd(v, B_\delta)+{C\over \delta^{3/4}}\GG_s(u,\Omega_\delta),\\
&||u_3||_{L^2(B_\delta)}\le C{ \big[\Gd(v, B_\delta)\big]^2\over \delta^3}+{C\over \delta^{1/2}}\big[\GG_s(u,\Omega_\delta)+\Gd(v,\Omega_{\delta})+\Gd(v,B_{\delta})\big].
\end{aligned}
\end{equation} The constants do not depend on $\delta$.
\end{cor}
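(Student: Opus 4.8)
The plan is to derive \eqref{KoP} directly from the rod decomposition \eqref{DecR}, the estimates \eqref{EstmRod} of Theorem \ref{Theorem II.2.2.}, and the three displacement estimates \eqref{KoP1} just established in Lemma \ref{lemme41}. The idea is that the decomposition \eqref{DecR} writes $u=v-I_d$ as the sum of the center-line displacement ${\cal W}$, the rotation term $(\GQ-\GI_3)(x_1\Ge_1+x_2\Ge_2)$, and the warping $\overline{\overline{v}}$; each of these three contributions is controlled by the right-hand sides appearing in \eqref{KoP}, so one simply adds them up componentwise.

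First I would prove the gradient estimate $\eqref{KoP}_1$. Write $\nabla u=(\nabla v-\GQ)+(\GQ-\GI_3)$. The term $\nabla v-\GQ$ is bounded in $L^2(B_\delta)$ by $C\Gd(v,B_\delta)$ thanks to $\eqref{EstmRod}_5$, which is already of the required order (in fact better). For the term $\GQ-\GI_3$, I would invoke the inequality \eqref{EstQ-I} derived in the proof of Lemma \ref{lemme41}, namely $\|\GQ-\GI_3\|_{L^2(-\delta,L;\R^{3\times3})}\le C\big(\delta^{-2}\Gd(v,B_\delta)+\delta^{-7/4}\GG_s(u,\Omega_\delta)\big)$; multiplying the $L^2$-in-$x_3$ norm by the area $|D_\delta|^{1/2}\sim\delta$ of the cross-section converts it to an $L^2(B_\delta)$ bound of order $\delta^{-1}\Gd(v,B_\delta)+\delta^{-3/4}\GG_s(u,\Omega_\delta)$, which is exactly $\eqref{KoP}_1$. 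Adding the two contributions gives the gradient estimate.

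Next, for the component estimates $\eqref{KoP}_2$ and $\eqref{KoP}_3$, I would split $u=(u-{\cal W})+{\cal W}$ and take the $\Ge_\alpha$- and $\Ge_3$-components separately. The term $u-{\cal W}$ is controlled in $L^2(B_\delta;\R^3)$ by $\eqref{KoP1}_1$, i.e. by $C(\Gd(v,B_\delta)+\delta^{1/4}\GG_s(u,\Omega_\delta))$, which is of lower order than all the right-hand sides in \eqref{KoP} and hence absorbed. For the remaining ${\cal W}$-terms I would pass from the $L^2(-\delta,L)$ norms of ${\cal W}_\alpha$ and ${\cal W}_3$ in $\eqref{KoP1}_2$ and $\eqref{KoP1}_3$ to $L^2(B_\delta)$ norms of the constant-in-$(x_1,x_2)$ functions ${\cal W}_\alpha$, ${\cal W}_3$ by multiplying by $|D_\delta|^{1/2}\sim\delta$. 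This yields $\|{\cal W}_\alpha\|_{L^2(B_\delta)}\le C(\delta^{-1}\Gd(v,B_\delta)+\delta^{-3/4}\GG_s(u,\Omega_\delta))$ and $\|{\cal W}_3\|_{L^2(B_\delta)}\le C\delta^{-3}[\Gd(v,B_\delta)]^2+C\delta^{-1/2}[\GG_s(u,\Omega_\delta)+\Gd(v,\Omega_\delta)+\Gd(v,B_\delta)]$, which combined with the $u-{\cal W}$ bound give $\eqref{KoP}_2$ and $\eqref{KoP}_3$ respectively. The only point requiring a little care is the bookkeeping of powers of $\delta$ when converting one-dimensional norms to three-dimensional ones and checking that the $u-{\cal W}$ remainder is genuinely subordinate; the main substantive input — the sharp bounds on ${\cal W}$ and $\GQ-\GI_3$ in the junction area — has already been done in Lemmas \ref{lemme2} and \ref{lemme41}, so no essential obstacle remains here.
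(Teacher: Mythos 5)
Your argument is correct and matches the paper's proof exactly: the gradient bound comes from $\nabla u = (\nabla v - \GQ) + (\GQ-\GI_3)$ together with $\eqref{EstmRod}_5$ and \eqref{EstQ-I} (with the $|D_\delta|^{1/2}\sim\delta$ conversion from one-dimensional to three-dimensional $L^2$ norms), and the bounds on $u_\alpha$, $u_3$ follow by splitting $u=(u-{\cal W})+{\cal W}$ and invoking Lemma~\ref{lemme41}, exactly as the paper does when it calls them ``immediate consequences'' of that lemma. The bookkeeping of powers of $\delta$ and the absorption of the $u-{\cal W}$ remainder are correctly verified.
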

\begin{proof}
From \eqref{EstmRod} and \eqref{EstQ-I}  we obtain
\begin{equation}\label{Gradu}
\big\|\nabla u\big\|_{L^2(B_\delta;\R^{3\times 3})}\le {C\over \delta} \Gd(v, B_\delta)
+ {C \over \delta^{3/4}}\GG_s(u,\Omega_\delta).
\end{equation} The second and third inequalities are   immediate consequences of Lemma \ref{lemme41}.
\end{proof}

\section{Elastic structure}
  \medskip
\subsection{Elastic energy.}
In this section  we assume that  the  structure ${\cal S}_{\delta}$ is made of an elastic material. The associated local energy $\widehat{W}\; :\;  \GX_3\longrightarrow \R^+$ is the following  St Venant-Kirchhoff's law \footnote{With a more general assumption on the nonlinear elasticity law (see for example \cite{FJM} page 1466)  we would obtain the same asymptotic behavior as in our case.} (see also \cite{C1}) 
\begin{equation}\label{HatW2}
\widehat{W}(F)=\left\{\begin{aligned}
&Q(F^TF-\GI_3)\quad\hbox{if}\quad \det(F)>0\\
&+\infty\hskip 1.8cm \hbox{if}\quad \det(F)\le 0.
\end{aligned}\right.
\end{equation} where $\GX_3$ is the space of  $3\times 3$ symmetric matrices and where the quadratic form $Q$ is given by
 \begin{equation}\label{QRQP}
Q(E)={\lambda \over 8}\big(tr(E) \big)^2+{\mu\over 4}tr\big(E^2\big)
, \end{equation}
and  where $(\lambda,\mu)$  are the Lam's coefficients of the material.
 Let us recall (see e.g. \cite{FJM} or \cite{BGRod}) that for any $3\times 3$ matrix $F$ such that $\det(F)>0$ we have
\begin{equation}\label{HatW4}
[tr(F^TF-\GI_3)]^2 = |||F^TF-\GI_3|||^2\ge \hbox { dist }(F,SO(3))^2.
\end{equation}

\subsection{Assumptions on the forces and final estimates.}

 Now we assume that the structure ${\cal S}_{\delta}$ is submitted to applied body forces $f_{\delta}\in L^2({\cal S}_{\delta} ; \R^3)$ and we define   the total energy  
$J_{\delta}(v)$\footnote{For later convenience, we have added the term $\ds \int_{{\cal S}_{\delta}}f_{\delta}(x)\cdot  I_d(x)dx$
to the usual standard energy, indeed this does not affect the  minimizing problem for  $J_{\delta}$. }  over $\D_\delta$ by
\begin{equation}\label{J}
J_{\delta}(v)=\int_{{\cal S}_{\delta}}\widehat{W}_\delta(\nabla  v)(x)dx-\int_{{\cal S}_{\delta}}f_{\delta}(x)\cdot (v(x)-I_d(x))dx.
\end{equation}

\noindent {\bf  Assumptions on the forces. } 
To introduce the scaling on $f_{\delta}$, let us consider   $f_r$, $g_1$, $g_2$  in $ L^2(0,L; \R^3)$ and  $f_p\in L^2(\omega;\R^3)$. We assume  that the force  $f_{\delta}$ is given  by
\begin{equation}\label{ForceP}
\begin{aligned}
f_{\delta}(x)&= \delta^{5/2} \Big[f_{r,1}(x_3)\Ge_1+f_{r,2}(x_3)\Ge_2+{1\over \delta^{1/2}}  f_{r,3}(x_3)\Ge_3+{x_1\over \delta^2} g_{1}(x_3)+{x_2\over \delta^2}g_{2}(x_3)\Big]\\
&\qquad x\in B_{\delta},\quad x_3>\delta,\\
f_{\delta,\alpha}(x)&=\delta^{2}  f_{p,\alpha}(x_1,x_2),\qquad f_{\delta,3}(x)=\delta^{3}  f_{p,3}(x_1,x_2),\qquad x\in \Omega_\delta.
\end{aligned}
\end{equation} We denote
\begin{equation}\label{F3}
F_{r,3}(x_3)=\int_{x_3}^Lf_{r,3}(s)ds,\qquad \hbox{for a. e. } x_3\in ]0,L[.
\end{equation} 
\begin{theorem}\label{TH53}
There exist two constants $C_0$ and $C_1$, which depend only on $\omega$, $L$ and $\mu$, such that if
\begin{equation}\label{C0}
||f_p||_{L^2(\Omega;\R^3)}\le C_0
\end{equation}
and if either 
\begin{equation}\label{C1}
\begin{aligned}
&\hbox{Case 1: for a. e. } x_3\in ]0,L[, \;\; F_{r,3}(x_3)\ge 0,\\
\hbox{or}&\\
 &\hbox{Case 2: } \;||f_{r,3}||_{L^2(0,L)}\le C_1
\end{aligned}\end{equation} then for $\delta$ small enough and  for any $v\in \D_\delta$ satisfying $J_\delta(v)\le 0$ we have
\begin{equation}\label{510}
\Gd(v, B_\delta) + \Gd(v,\Omega_\delta)  \le C\delta^{5/2}
\end{equation} where the constant does not depend on $\delta$.
\end{theorem}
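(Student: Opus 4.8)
The plan is to combine the coercivity of the St\,Venant--Kirchhoff energy with the estimates of Lemmas \ref{lem33}, \ref{lemme2}, \ref{lemme41} and Corollary \ref{CorKorn} and the plate Korn inequalities \eqref{KoP0}, and then to absorb the resulting nonlinear terms into the left-hand side using \eqref{C0}--\eqref{C1}; throughout, write $u=v-I_d$, $a=\Gd(v,\Omega_\delta)$, $b=\Gd(v,B_\delta)$, $s=\GG_s(u,\Omega_\delta)$. First I would use coercivity: since $J_\delta(v)\le0$ the energy $\int_{{\cal S}_\delta}\widehat{W}(\nabla v)$ is finite, so $\det\nabla v>0$ a.e., and by \eqref{QRQP}, \eqref{HatW4} and $\mu>0$ there is $c=c(\mu)>0$ with $\widehat{W}(F)\ge c\,\mathrm{dist}(F,SO(3))^2$ whenever $\det F>0$. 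Since $\Omega_\delta\cap B_\delta=C_\delta$ and the integrand is non-negative, $\int_{C_\delta}\mathrm{dist}(\nabla v,SO(3))^2\le\min(a^2,b^2)$, hence $\int_{{\cal S}_\delta}\mathrm{dist}(\nabla v,SO(3))^2\ge\max(a^2,b^2)\ge\tfrac{1}{2}(a^2+b^2)$, and $J_\delta(v)\le0$ gives
$$\frac{c}{2}\,(a^2+b^2)\le\int_{{\cal S}_\delta}f_\delta(x)\cdot u(x)\,dx.$$

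Next I would bound the work of the forces, splitting $\int_{{\cal S}_\delta}f_\delta\cdot u=\int_{\Omega_\delta}f_\delta\cdot u+\int_{D_\delta\times]\delta,L[}f_\delta\cdot u$. For the plate integral, the scaling \eqref{ForceP} of $f_\delta$ in $\Omega_\delta$ and \eqref{KoP0} give $\big|\int_{\Omega_\delta}f_\delta\cdot u\big|\le C\delta^{5/2}\|f_p\|_{L^2(\omega)}\,s$, and the nonlinear Korn inequality for the plate of \cite{DB_GGI} (this is where \eqref{C0} is used) replaces $s$ by $Ca$ up to a higher-order term, so this integral is $\le C\delta^{5/2}\|f_p\|\,a$ plus a quantity absorbable for $\delta$ small. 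For the rod integral I would integrate over cross-sections: since $\int_{D_\delta}u\,dx_1dx_2=\pi\delta^2{\cal W}(x_3)$ and $f_{r,1},f_{r,2},f_{r,3}$ depend only on $x_3$, the $\Ge_1,\Ge_2$-components equal $\pi\delta^{9/2}\int_\delta^L f_{r,\alpha}{\cal W}_\alpha\,dx_3$, bounded via $\eqref{KoP1}_2$ by $C\delta^{5/2}(a+b)$; the first moments $\int_{D_\delta}x_\alpha u\,dx_1dx_2=\tfrac{\pi\delta^4}{4}(\GQ(x_3)-\GI_3)\Ge_\alpha+\int_{D_\delta}x_\alpha\overline{\overline{v}}\,dx_1dx_2$ entering the $g_1,g_2$-terms are handled through \eqref{EstQ-I} and \eqref{EstmRod}, again giving $\le C\delta^{5/2}(a+b)$; and the $\Ge_3$-component equals $\pi\delta^4\int_\delta^L f_{r,3}{\cal W}_3\,dx_3$, which by $\eqref{KoP1}_3$ (namely $\|{\cal W}_3\|_{L^2}\le Cb^2/\delta^4+C\delta^{-3/2}(s+a+b)$) satisfies
$$\Big|\pi\delta^4\int_\delta^L f_{r,3}\,{\cal W}_3\,dx_3\Big|\le C\|f_{r,3}\|_{L^2(0,L)}\,b^2+C\delta^{5/2}\|f_{r,3}\|_{L^2(0,L)}(a+b).$$

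Now every contribution is of the form $C\delta^{5/2}(a+b)$ plus a nonlinear term carrying a positive power of $\delta$ (hence absorbable into $\tfrac{c}{2}(a^2+b^2)$ once $\delta$ is small), \emph{except} the term $C\|f_{r,3}\|b^2$, whose coefficient does not vanish with $\delta$; this is where \eqref{C1} enters. In Case 2 one simply requires $C_1$ small enough that $C\|f_{r,3}\|\le c/8$. In Case 1 one avoids the crude $L^2$ bound on ${\cal W}_3$ and integrates by parts, writing $f_{r,3}=-F_{r,3}'$ with $F_{r,3}$ as in \eqref{F3}: the endpoint term at $x_3=L$ vanishes; the one at $x_3=\delta$ involves ${\cal W}_3(\delta)$, which \eqref{W3(0)-V3(0,0)} and \eqref{dW3} control by $C\delta^{-3/2}a+C\delta^{-2}b^2+C\delta^{-1/2}(a+b)$, and it is multiplied by $\delta^4$; and in the bulk term $\pi\delta^4\int_\delta^L F_{r,3}\,d{\cal W}_3/dx_3$ one writes ${\cal W}_3={\cal W}^{(m)}_3+{\cal W}^{(s)}_3$ and uses \eqref{dW31}: $d{\cal W}^{(m)}_3/dx_3=-\tfrac{1}{2}|(\GQ-\GI_3)\Ge_3|^2\le0$, so since $F_{r,3}\ge0$ this part has the right sign and is discarded, while the ${\cal W}^{(s)}$-part is controlled by \eqref{EstWS}. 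Thus in Case 1 the surviving quadratic-in-$b$ term carries only a coefficient $\le C\delta^2\|f_{r,3}\|$. Collecting everything, for $\delta$ small (and $C_1$ small in Case 2),
$$\frac{c}{2}(a^2+b^2)\le\frac{c}{4}(a^2+b^2)+C\delta^{5/2}(a+b),$$
and since $a+b\le\sqrt{2}\,(a^2+b^2)^{1/2}$ this forces $(a^2+b^2)^{1/2}\le C\delta^{5/2}$, i.e. \eqref{510}.

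The hard part is precisely the axial term $\pi\delta^4\int_\delta^L f_{r,3}{\cal W}_3$: at this energy level the stretching--compression ${\cal W}_3$ of the rod is of order $\delta$, one order below its bending (of order $\delta^{1/2}$), so the center-line bound $\|{\cal W}_3\|_{L^2}\lesssim b^2/\delta^4$ is genuinely quadratic in $\Gd(v,B_\delta)$ and the corresponding force work cannot be absorbed by a purely linear argument — which is exactly what forces the dichotomy \eqref{C1} (a sign condition on $F_{r,3}$, exploited through \eqref{dW31}, or smallness of $\|f_{r,3}\|_{L^2}$). A secondary difficulty is that, through the junction (Lemma \ref{lemme2}), the rod estimates involve the plate strain $\GG_s(u,\Omega_\delta)$, so the two parts of the structure cannot be treated independently and one must feed in the plate's nonlinear Korn inequality, whose availability is what \eqref{C0} ensures.
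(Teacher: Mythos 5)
Your proposal follows essentially the same route as the paper's proof: the St.\ Venant--Kirchhoff coercivity $\widehat{W}(F)\ge\frac{\mu}{4}\mathrm{dist}(F,SO(3))^2$, the plate and rod Korn-type estimates (\eqref{KoP0}, \eqref{KoP1}, \eqref{GGsd}), the integration by parts $\int_\delta^L f_{r,3}\,{\cal W}^{(m)}_3\,dx_3=F_{r,3}(\delta){\cal W}^{(m)}_3(\delta)+\int_\delta^L F_{r,3}\frac{d{\cal W}^{(m)}_3}{dx_3}\,dx_3$ combined with the sign observation $\frac{d{\cal W}^{(m)}_3}{dx_3}=-\frac12|(\GQ-\GI_3)\Ge_3|^2\le 0$ in Case 1, the direct $L^2$ bound plus smallness of $\|f_{r,3}\|$ in Case 2, and finally the absorption of the surviving quadratic terms using \eqref{C0}, \eqref{C1} and $\delta$ small. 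The only imprecision is calling the term $C\|f_p\|\big[\Gd(v,\Omega_\delta)\big]^2$ (coming from the nonlinear part of \eqref{GGsd}) ``higher-order'': its coefficient is $\delta$-independent, and it is absorbed precisely by the smallness \eqref{C0}, not by sending $\delta\to0$ --- though you do correctly flag \eqref{C0} as the relevant hypothesis, so this is only a matter of wording.
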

\begin{proof} From \eqref{KoP0} and  the assumptions \eqref{ForceP}  on the body forces, we obtain on the one hand for any $v\in \D_\delta$ and with $u=v-I_d$
\begin{equation}\label{nivf}
\Big|\int_{\Omega_{\delta}}f_{\delta}(x)\cdot u(x)dx \Big|\le C\delta^{5/2}||f_p||_{L^2(\omega;\R^3)}\GG_s(u,\Omega_{\delta}).
\end{equation} 
As far as the term involving the forces in the rod are concerned we first have	
\begin{equation*}
\begin{aligned}
\int_{B_{\delta}}f_{\delta}(x)\cdot u(x) dx &=\pi\delta^{9/2}\int_{\delta}^L f_{r,\alpha}(x_3)  {\cal W}_\alpha(x_3)dx_3+\pi\delta^{4}\int_{\delta}^L f_{r,3}(x_3)  {\cal W}_3(x_3)dx_3\\
&+ \int_{B_{\delta}}f_{\delta}(x)\cdot (u(x)- {\cal W}(x_3))dx.
\end{aligned}\end{equation*} Then, using Lemma \ref{lemme41} and \eqref{ForceP} we first get
\begin{equation}\label{nivf2}
\begin{aligned}
&\Big|\int_{\delta}^Lf_{r,\alpha}(x_3) {\cal W}_\alpha(x_3)dx_3 \Big|\le {C\over \delta^2}\sum_{\alpha=1}^2||f_{r,\alpha}||_{L^2(0,L)}\big(\Gd(v, B_\delta)+ \delta^{1/4}\GG_s(u,\Omega_\delta)\big),\\
&\Big|\int_{B_{\delta}}f_{\delta}(x)\cdot \big(u(x)- {\cal W}(x_3)\big)dx \Big|\le C\delta^{5/2}\big( ||g_1||_{L^2(0,L;\R^3)}+||g_2||_{L^2(0,L;\R^3)}\big)\\
&\hskip6.2cm\big(\Gd(v, B_\delta)+  \delta^{1/4}\GG_s(u,\Omega_\delta)\big)
\end{aligned}\end{equation} Now we estimate $\ds\int_{\delta}^L f_{r,3}  {\cal W}_3(x_3)dx_3$. From \eqref{EstWS} we first obtain
\begin{equation}\label{F3WS}
\Big|\int_{\delta}^Lf_{r,3}(x_3) {\cal W}_3^{(s)}(x_3)dx_3 \Big|\le {C\over \delta} ||f_{r,3}||_{L^2(0,L)}\Gd(v, B_\delta).
\end{equation} Then we have
\begin{equation}\label{nivf2}
\int_{\delta}^Lf_{r,3}(x_3){\cal W}^{(m)}_{3}(x_3)dx_3= F_{r,3}(\delta){\cal W}^{(m)}_{3}(\delta)+\int_{\delta}^LF_{r,3}(x_3){d{\cal W}^{(m)}_{3}\over dx_3}(x_3)dx_3.
\end{equation} Taking to account \eqref{dW33}  and  \eqref{W3(0)-V3(0,0)}  we get 
\begin{equation}\label{WBdelta}
\begin{aligned}
|{\cal W}^{(m)}_{3}(\delta)| & \le |{\cal W}_3(0)|+\delta^{1/2}\Bigl\|{d{\cal W}^{(m)}_{3}\over dx_3}\Big\|_{L^2(-\delta,\delta)} \\
 & \le   {C\over \delta^{3/2}} \Gd(v,\Omega_{\delta})+ {C\over \delta^{2}} \big[\Gd(v,B_{\delta})\big]^2+{C\over \delta^{1/2} }\big(\GG_s(u,\Omega_\delta)+ \Gd(v,B_{\delta})\big).
 \end{aligned}\end{equation} 
Observe now that due to the expression \eqref{dW31} of $\ds {d{\cal W}^{(m)}_{3}\over dx_3}$, this derivative is nonpositive for a.e. $x_3\in ]0,L[$ (see \eqref{dW31}). 
\vskip 1mm
\noindent $\bullet$ If we are in Case 1 in \eqref{C1}, we have
\begin{equation*}
\int_{\delta}^Lf_{r,3}(x_3){\cal W}_{3}(x_3)dx_3\le C||f_{r,3}||_{L^2(0,L)}\Big[{ \Gd(v,\Omega_{\delta})\over \delta^{3/2}}+ {\big[\Gd(v,B_{\delta})\big]^2\over \delta^{2}} +{ \GG_s(u,\Omega_\delta)+ \Gd(v,B_{\delta})\over \delta^{1/2} }\Big].
\end{equation*} Hence, we obtain
\begin{equation*}
\begin{aligned}
\int_{B_{\delta}}f_{\delta}(x)\cdot u(x) dx \le& C\delta^{5/2}\sum_{\alpha=1}^2\big( ||f_{r,\alpha}||_{L^2(0,L)}+||g_\alpha||_{L^2(0,L;\R^3)}\big)\big(\Gd(v, B_\delta)+  \delta^{1/4}\GG_s(u,\Omega_\delta)\big) \\
+ C||f_{r,3} &||_{L^2(0,L)}\Big[\delta^{5/2} \Gd(v,\Omega_{\delta}) + \delta^2 [\Gd(v,B_{\delta})\big]^2 +\delta^{7/2}\big(\GG_s(u,\Omega_\delta)+ \Gd(v,B_{\delta})\big)\Big].
 \end{aligned}\end{equation*}  We recall that (see \cite{BGJE}) 
\begin{equation}\label{GGsd}
\GG_s(u,\Omega_\delta)\le C\Gd(v,\Omega_\delta)+{C\over \delta^{5/2}}\big[\Gd(v,\Omega_\delta)\big]^2
\end{equation} where the constant does not depend on $\delta$. Then due to \eqref{nivf} and the above inequalities  we obtain that
\begin{equation}\label{520}
\begin{aligned}
\int_{\Omega_{\delta}}f_{\delta}(x)\cdot u(x) dx \le & C||f_r||_{L^2(\omega;\R^3)}\delta^{5/2}\Gd(v, \Omega_\delta)+ C^*||f_r||_{L^2(\omega;\R^3)}\big[\Gd(v, \Omega_\delta)\big]^2\\
\int_{B_{\delta}}f_{\delta}(x)\cdot u(x) dx \le & C(f_r,g_1,g_2)\delta^{5/2}\big(\Gd(v, B_\delta)+  \Gd(v,\Omega_\delta)\big) \\    
&\hskip-1cm +C(f_r,g_1,g_2)\delta^{1/4}\big[\Gd(v,\Omega_\delta)\big]^2+  C||f_{r,3}||_{L^2(0,L)} \delta^2 \big[\Gd(v, B_\delta)\big]^2.
 \end{aligned}\end{equation} Now,  for any $v\in \D_\delta$ such that $J_\delta(v)\le 0$, assumptions \eqref{HatW2}, \eqref{QRQP},\eqref{HatW4} and the above estimates lead to
\begin{equation*}
\begin{aligned} 
&{\mu\over 8}\big(\big[\Gd(v, B_\delta)]^2+  \big[\Gd(v,\Omega_\delta)\big]^2\big)\le \int_{{\cal S}_\delta}\widehat{W}(\nabla v)(x)dx\le \int_{{\cal S}_\delta}f_\delta(x)\cdot u(x)dx\\
 \le & C(f_r,g_1,g_2)\delta^{5/2}\big(\Gd(v, B_\delta)+  \Gd(v,\Omega_\delta)\big) +C(f_r,g_1,g_2)\delta^{1/4}\big[\Gd(v,\Omega_\delta)\big]^2\\
+ & C||f_{r,3}||_{L^2(0,L)} \delta^2 \big[\Gd(v, B_\delta)\big]^2+C\delta^{5/2}||f_p||_{\omega;\R^3}\Gd(v,\Omega_\delta)+C^*||f_p||_{\omega;\R^3}\big[\Gd(v,\Omega_\delta)\big]^2.
 \end{aligned}\end{equation*} wich in turn gives
 \begin{equation*}
\begin{aligned} 
&\Big({\mu\over 8}-C||f_{r,3}||_{L^2(0,L)} \delta^2\Big)\big[\Gd(v, B_\delta)]^2+  \Big({\mu\over 8}-C^*||f_p||_{\omega;\R^3}-C(f_r,g_1,g_2)\delta^{1/4}\Big)\big[\Gd(v,\Omega_\delta)\big]^2\\
 \le & C(f_r,g_1,g_2)\delta^{5/2}\big(\Gd(v, B_\delta)+  \Gd(v,\Omega_\delta)\big)+C\delta^{5/2}||f_p||_{\omega;\R^3}\Gd(v,\Omega_\delta).
 \end{aligned}\end{equation*} Indeed the two quantities $C||f_{r,3}||_{L^2(0,L)} \delta^2$ and $C(f_r,g_1,g_2)\delta^{1/4}$ tend to $0$ as $\delta$ tends to $0$, then, under the condition $C^*||f_p||_{L^2(\omega;\R^3)}\le \mu/32$ and  for $\delta$ small enough we obtain
 \begin{equation*}
 \Gd(v, B_\delta) + \Gd(v,\Omega_\delta)  \le C\delta^{5/2}.
\end{equation*} The constant does not depend on $\delta$.
\vskip 1mm
\noindent $\bullet$ If we are in Case 2 in \eqref{C1}, from \eqref{KoP1} we immediately have 
\begin{equation*}
\int_{\delta}^Lf_{r,3}(x_3){\cal W}_{3}(x_3)dx_3\le C||f_{r,3}||_{L^2(0,L)}\Big[{\big[\Gd(v,B_{\delta})\big]^2\over \delta^{4}} +{ \GG_s(u,\Omega_\delta)+ \Gd(v,\Omega_\delta)+ \Gd(v,B_{\delta})\over \delta^{3/2} }\Big].
\end{equation*} Then, proceeding as in Case 1 leads to
\begin{equation}\label{521}
\begin{aligned}
\int_{B_{\delta}}f_{\delta}(x)\cdot u(x) dx \le& C\delta^{5/2}\sum_{\alpha=1}^2\big( ||f_{r,\alpha}||_{L^2(0,L)}+||g_\alpha||_{L^2(0,L;\R^3)}\big)\big(\Gd(v, B_\delta)+  \delta^{1/4}\GG_s(u,\Omega_\delta)\big) \\
&\hskip-1cm+ C||f_{r,3}||_{L^2(0,L)}\Big[\big[\Gd(v,B_{\delta})\big]^2  +\delta^{5/2}\big( \GG_s(u,\Omega_\delta)+ \Gd(v,\Omega_\delta)+ \Gd(v,B_{\delta})\big)\Big].
 \end{aligned}\end{equation} Then for $\delta$ small enough, we get
\begin{equation*}
\begin{aligned} 
& {\mu\over 8}\big(\big[\Gd(v, B_\delta)]^2+  \big[\Gd(v,\Omega_\delta)\big]^2\big) \le \int_{{\cal S}_\delta}\widehat{W}(\nabla v)(x)dx\le \int_{{\cal S}_\delta}f_\delta(x)\cdot u(x)dx\\
&\le  C\delta^{5/2}C(f,g)\big(\Gd(v, B_\delta)+  \Gd(v,\Omega_\delta)\big)+ C^{**}||f_{r,3}||_{L^2(0,L)}\Big[\big[\Gd(v,B_{\delta})\big]^2  +\big[ \Gd(v,\Omega_\delta)\big]^2\Big]\\
&+C\delta^{5/2}||f_p||_{\omega;\R^3}\Gd(v,\Omega_\delta)+C^*||f_p||_{\omega;\R^3}\big[\Gd(v,\Omega_\delta)\big]^2.
 \end{aligned}\end{equation*} Hence, under the conditions $C^*||f_p||_{L^2(\omega;\R^3)}\le \mu / 32$ and $C^{**}||f_{r,3}||_{L^2(0,L)}\le \mu/32$ we deduce that
$$\Gd(v, B_\delta) + \Gd(v,\Omega_\delta)  \le C\delta^{5/2}.$$ In the both cases, we finally obtain \eqref{510}
\end{proof} As a consequence of Theorem \ref{TH53} and estimates \eqref{520}-\eqref{521}, we deduce that for $\delta$ small enough and for any $v\in\D_\delta$ satisfying $J_\delta(v)\le 0$ we have $(u=v-I_d$)
\begin{equation}\label{EstDist}
 \int_{{\cal S}_{\delta}}f_{\delta}\cdot u\le C\delta^{5},\qquad 
\int_{{\cal S}_{\delta}}\widehat{W}_\delta(\nabla  v)(x)dx \le C\delta^{5}.\end{equation}
From \eqref{EstDist} we also obtain for any $v\in \D_\delta$ such that $J_\delta(v)\le 0$
\begin{equation}\label{EstJ}
c\delta^{5}\le J_{\delta}(v)
\end{equation} where $c$ is a nonpositive constant which does not depend on $\delta$. 
We set
$$m_{\delta}=\inf_{v\in \D_\delta}J_{\delta}(v).$$ 
As a consequence of  \eqref{EstJ} we  have 
\begin{equation}\label{mdelta}
c\le {m_{\delta}\over \delta^{5}}\le 0.
\end{equation}  In general, a minimizer of $J_{\delta}$ does not exist  on $\D_\delta$.

\section{Asymptotic behavior of a sequence of deformations of the whole structure ${\cal S}_{\delta}$.}

In this subsection and the following one, we consider a sequence of deformations $(v_\delta)$ belonging to $\D_\delta$ and satisfying 
\begin{equation}\label{energie}
\Gd(v_\delta, B_\delta) + \Gd(v_\delta,\Omega_\delta) \le C\delta^{5/2}
\end{equation} where the constant does not depend on $\delta$. Setting $u_\delta=v_\delta-I_d$, then, due to \eqref{energie} and \eqref{GGsd} we obtain that
\begin{equation}\label{energie2}
\GG_s(u_\delta, \Omega_\delta) \le C\delta^{5/2}.
\end{equation}
For any open subset ${\cal O}\subset\R^2$ and for any field $\psi\in H^1({\cal O};\R^3)$, we denote
\begin{equation}\label{NotGam}
\gamma_{\alpha\beta}(\psi)={1\over 2}\Big({\partial\psi_\alpha\over \partial x_\beta}+{\partial\psi_\beta\over \partial x_\alpha}\Big), \qquad (\alpha,\beta)\in\{1,2\}.
\end{equation}

\subsection{The rescaling operators}

Before rescaling the domains, we introduce the reference domain $\Omega$ for the plate and the one $B$ for the rod
$$\Omega=\omega\times ]-1,1[,\qquad B=D\times ]0, L[=D(O, 1)\times ]0, L[.$$
As usual when dealing with thin  structures, we rescale $\Omega_\delta$ and $B_{\delta}$ using -for the plate-  the operator
$$\Pi_\delta( w)(x_1,x_2,X_3)=w(x_1,x_2,\delta X_3)\hbox{ for any}\;\; (x_1, x_2, X_3)\in \Omega $$
defined for e.g. $w\in L^2(\Omega_\delta)$ for which $\Pi_\delta (w)\in  L^2(\Omega)$ and using -for the rod- the operator
$$P_\delta( w)(X_1,X_2,x_3)=w(\delta X_1, \delta X_2, x_3)\hbox{ for any}\;\; (X_1, X_2, x_3)\in B $$
defined for e.g. $w\in L^2(B_{\delta})$ for which $P_\delta (w)\in  L^2(B)$.

\subsection{Asymptotic behavior in the plate.}\label{In the plate.}

Following Section 2 we decompose the restriction of  $u_\delta=v_\delta-I_d$ to the plate.  The Theorem \ref{Theorem 3.3.} gives ${\cal U}_\delta$, ${\cal R}_\delta$ and $\overline{u}_\delta$, then estimates \eqref{Estm}  lead to the following convergences for  a subsequence still indexed by $\delta$ (see \cite{GSP} for the detailed proofs of the below convergences and equalities)
\begin{equation}\label{4.9}\begin{aligned}
{1\over \delta} {\cal U}_{\delta,3} &\longrightarrow   {\cal U}_3 \quad \hbox{strongly in}\quad H^1(\omega),\\
{1\over \delta^{2}}  {\cal U}_{\delta,\alpha} &\rightharpoonup    {\cal U}_\alpha \quad \hbox{weakly in}\quad H^1(\omega),\\
{1\over \delta} {\cal R}_{\delta} &\rightharpoonup  {\cal R} \quad \hbox{weakly in}\quad H^1(\omega;\R^3),\\
{1\over \delta^{3}}\Pi _\delta(\overline{u}_\delta)& \rightharpoonup  \overline{u} \quad \hbox{weakly in}\quad L^2(\omega;H^1(-1,1;\R^3),\\
{1\over \delta^{2}}\Bigl({\partial{\cal U}_\delta\over \partial x_\alpha}-{\cal R}_\delta\land \Ge_\alpha\Big)& \rightharpoonup {\cal Z}_\alpha\quad \hbox{weakly in}\quad L^2(\omega;\R^3).\end{aligned}\end{equation}
Denoting by $\GA_{\cal R}$ the field of antisymmetric matrices associated to ${\cal R}$ as in Section 2, we also have
\begin{equation}\label{4.10}\begin{aligned}
{1\over \delta^{2}}\Pi_\delta(u_\delta-{\cal U}_\delta)&\longrightarrow X_3{\cal R}\land\Ge_3\quad \hbox{strongly in}\quad L^2(\Omega;\R^3),\\
{1\over \delta}\Pi _\delta (\nabla  u_\delta)& \longrightarrow  \GA_{\cal R} \quad \hbox{strongly in}\quad L^2(\Omega;\R^9).\end{aligned}\end{equation}
The boundary conditions \eqref{CLUR} give here
\begin{equation}\label{CLURLimit}{\cal U}_3=0, \quad {\cal U}_\alpha=0,\quad  {\cal R}=0\qquad\hbox{on}\quad \gamma_0,
\end{equation}
while \eqref{4.9} show that ${\cal U}_3\in H^2(\omega)$ with 
\begin{equation}\label{4.100}
{\partial{\cal U}_3\over \partial x_1}=-{\cal R}_2, \qquad{\partial{\cal U}_3\over \partial x_2}={\cal R}_1.
\end{equation}

 In \cite{BGJE} (see Theorem 7.3) the  limit of the Green-St Venant's strain tensor of the sequence $v_\delta$ is also derived. Let us set
\begin{equation}\label{4.101}
\overline{u}_p=\overline{u}+{X_3\over 2}\big({\cal Z}_1 \cdot\Ge_3\big)\Ge_1+{X_3\over 2}\big({\cal Z}_2 \cdot\Ge_3\big)\Ge_2
\end{equation}  and 
\begin{equation}\label{401}
{\cal Z}_{\alpha\beta}=\gamma_{\alpha\beta}({\cal U})+{1\over 2}{\partial {\cal U}_3\over \partial x_\alpha}{\partial {\cal U}_3\over \partial x_\beta}.
\end{equation} 
Then  we have
$${1\over 2\delta^{2}}\Pi _\delta \big((\nabla v_\delta)^T\nabla v_\delta -\GI_3\big)\rightharpoonup
\GE_p\qquad\hbox{weakly in}\quad L^1(\Omega;\R^9),$$ where the symmetric matrix $\GE_p$ is defined by
\begin{equation}
\GE_p=\begin{pmatrix}
\displaystyle  -X_3{\partial^2{\cal U}_3\over \partial x_1^2}+{\cal Z}_{11} & \displaystyle  -X_3 {\partial^2 {\cal U}_3\over \partial x_1\partial x_2}+{\cal Z}_{12}
&\displaystyle  {1\over 2}{\partial\overline{u}_{p,1} \over \partial X_3}\\
* & \displaystyle  -X_3{\partial^2 {\cal U}_3\over \partial x_2^2}+{\cal Z}_{22}  &\displaystyle  {1\over 2}{\partial\overline{u}_{p,2} \over \partial X_3}\\
* & *&  \displaystyle  {\partial\overline{u}_{p,3} \over \partial X_3}
\end{pmatrix}.
\end{equation}

\subsection{Asymptotic behavior in the rod.}\label{In the rod.}

Now, we decompose the restriction of  $v_\delta=u_\delta+I_d$ to the rod (see Section 2).  The Theorem \ref{Theorem II.2.2.} gives ${\cal W}_\delta$, $\GQ_\delta$, $\overline{\overline{v}}_\delta$ and thanks to \eqref{WBWS} we define 
${\cal W}^{(m)}_{\delta}$ and ${\cal W}^{(s)}_{\delta}$. 
Then the estimates  in Theorem \ref{Theorem II.2.2.} and Lemma \ref{lem33} allow to claim that 
\begin{equation}\label{EstmRod2}
\begin{aligned}
&||\overline{\overline{v}}_\delta ||_{L^2(B_{\delta}; \R^3)}\le C \delta^{7/2},\quad||\nabla \overline{\overline{v}}_\delta ||_{ L^2(B_{\delta}; \R^3)}\le C \delta^{5/2},\quad  ||{\cal W}^{(s)}_{\delta}||_{H^1(-\delta,L;\R^3)}\le C \delta^{3/2},\\
& ||{\cal W}^{(m)}_{\delta,\alpha}-{\cal W}_{\delta,\alpha}(0)||_{H^2(-\delta,L)}\le  C\delta^{1/2}+C|||\GQ_\delta(0) -\GI_3|||,\\
&\Big\|{d{\cal W}^{(m)}_{\delta, 3}\over dx_3}\Big\|_{L^2(-\delta,L)}\le  C\delta+C |(\GQ_\delta(0) -\GI_3)\Ge_3\cdot\Ge_3|,\\
&||\GQ_\delta -\GQ_\delta (0)||_{H^1(-\delta,L;\R^9)}\le C\delta^{1/2}.
\end{aligned}
\end{equation}
 Moreover from Lemma \ref{lemme2} we get 
 \begin{equation}\label{QW1}\begin{aligned}
 &|||\GQ_\delta(0)-\GI_3|||\le C \delta^{3/4},\qquad |(\GQ_\delta(0)-\GI_3)\Ge_3\cdot\Ge_3|\le C \delta,\\
&|{\cal W}^{(m)}_{\delta,\alpha}(0)|\le  C\delta^{7/4},\qquad |{\cal W}_{\delta,3}(0)-\widetilde {\cal U}_{\delta, 3}(0,0)|\le C\delta^{3/2},\\
&|{\cal W}_{\delta,3}(0)|\le C\delta.
\end{aligned}\end{equation} Finally we obtain the following estimates of the terms ${\cal W}^{(m)}_{\delta,\alpha}$, ${\cal W}^{(m)}_{\delta,3}$ and $\GQ_\delta -\GI_3$:
\begin{equation}\label{EstmRod3}
\begin{aligned}
&||\GQ_\delta -\GI_3||_{H^1(-\delta,L;\R^9)}\le C\delta^{1/2},\quad ||{\cal W}^{(m)}_{\delta,\alpha}||_{H^2(-\delta,L)}\le  C\delta^{1/2},\\
&||{\cal W}^{(m)}_{\delta,3}||_{H^1(-\delta,L)}\le  C\delta.
\end{aligned}
\end{equation}

 \noindent Now we  are in a position to prove the  following lemma:
 
\begin{lemma}\label{Lemma 6.2. } There exists a subsequence still indexed by $\delta$ such that 
\begin{equation}\label{ConvPl}
\begin{aligned}
&{1\over \delta^{1/2}}{\cal W}^{(m)}_{\delta,\alpha}\rightharpoonup {\cal W}_{\alpha} \quad \hbox{weakly in}\quad  H^2(0,L),\\
&{1\over \delta^{1/2}}{\cal W}_{\delta,\alpha},\;{1\over \delta^{1/2}}{\cal W}^{(m)}_{\delta,\alpha}\longrightarrow   {\cal W}_{\alpha} \quad \hbox{strongly in}\quad  H^1(0,L),\\
&{1\over \delta}{\cal W}_{\delta,3},\;{1\over \delta}{\cal W}^{(m)}_{\delta,3}\rightharpoonup  {\cal W}_{3} \quad \hbox{weakly in}\quad  H^1(0,L),\\
&{1\over \delta^{3/2}}{\cal W}^{(s)}_{\delta}\rightharpoonup  {\cal W}^{(s)} \quad \hbox{weakly in}\quad  H^1(0,L;\R^3),\\
&{1\over \delta^{1/2}}\big(\GQ_{\delta}-\GI_3)\rightharpoonup \GA_{\cal Q} \quad \hbox{weakly in}\quad H^1(0,L ; \R^9), \\
&{1\over \delta^{5/2}}P_\delta(\overline{v}_{\delta}) \rightharpoonup  \overline{\overline{v}} \quad \hbox{weakly in}\quad  L^2(0,L;H^1(D;\R^3)).
\end{aligned}
\end{equation} We also have ${\cal W}_{\alpha}\in H^2(0, L)$ and for a.e. $x_3\in ]0,L[$ we have
\begin{equation}\label{VIs}
\begin{aligned}
{d{\cal W}_{1}\over dx_3}(x_3)={\cal Q}_2(x_3),\qquad {d{\cal W}_{2}\over dx_3}(x_3)=-{\cal Q}_1(x_3),\\
 {d{\cal W}_{3}\over dx_3}(x_3)+{1\over 2}\Big[\Big|{d{\cal W}_{1}\over dx_3}(x_3)\Big|^2+\Big|{d{\cal W}_{2}\over dx_3}(x_3)\Big|^2\Big]=0.
\end{aligned}
\end{equation}
The junction conditions
\begin{equation}\label{V=WQ(0)}
 {\cal W}_\alpha(0)=0,\quad {\cal Q}(0)=0,\qquad {\cal W}^{(s)}(0)=0,\qquad {\cal W}_{3}(0)={\cal U}_3(0,0)
\end{equation} hold true.  We have 
\begin{equation}\label{GSVcase2}
{1\over 2\delta^{3/2}}P_\delta \big((\nabla v_\delta)^T\nabla v_\delta-\GI_3\big)\rightharpoonup   \GE_r\qquad\hbox{weakly in}\quad  L^1(B ; \R^{3\times 3}),
\end{equation}  where the symmetric matrix $\GE_r$ is defined by  
\begin{equation}\label{HatEcase2}
\GE_r= \begin{pmatrix}
\ds   \gamma_{11}(\overline{\overline{v}}_r) & \ds   \gamma_{12}(\overline{\overline{v}}_r) & \ds -{1\over 2}X_2{d{\cal Q}_3\over dx_3}+{1\over 2}{\partial\overline{\overline{v}}_{r,3}\over \partial X_1}+{1\over 2}{d{\cal W}^{(s)}_1\over dx_3}\\  \\
* & \ds   \gamma_{22}(\overline{\overline{v}}_r) & \ds {1\over 2}X_1{d{\cal Q}_3\over dx_3}+{1\over 2}{\partial\overline{\overline{v}}_{r,3}\over \partial X_2}+{1\over 2}{d{\cal W}^{(s)}_2\over dx_3}\\  \\
* & * &  \ds -X_1{d^2{\cal W}_1\over dx^2_3}-X_2{d^2{\cal W}_2\over dx^2_3}+{d{\cal W}^{(s)}_3\over dx_3}&
\end{pmatrix}.
 \end{equation}
\end{lemma}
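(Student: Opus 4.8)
The strategy is to extract convergent subsequences from the uniform bounds established above, then identify the limits and verify the asymptotic relations term by term. First I would use estimates \eqref{EstmRod3} together with \eqref{QW1} (which in turn rest on \eqref{energie}) to get the weak $H^2$, $H^1$ and $L^2(0,L;H^1(D))$ bounds needed to extract the subsequence yielding the six convergences in \eqref{ConvPl}; the strong $H^1$ convergence of the in-plane center-line displacements follows from the weak $H^2$ bound by Rellich. The scalings ($\delta^{1/2}$ for the bending, $\delta$ for the stretching, $\delta^{3/4}$ for $\GQ_\delta-\GI_3$, $\delta^{3/2}$ for ${\cal W}^{(s)}_\delta$, $\delta^{5/2}$ for the warping) are exactly the ones under which \eqref{EstmRod3} and \eqref{EstmRod2} give $O(1)$ right-hand sides, so the limits exist. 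That ${\cal A}_{\cal Q}$ is a field of antisymmetric matrices is immediate since $\GQ_\delta\in SO(3)$ forces $(\GQ_\delta-\GI_3)+(\GQ_\delta-\GI_3)^T = -(\GQ_\delta-\GI_3)^T(\GQ_\delta-\GI_3)$, whose right-hand side is $O(\delta)$ in $L^\infty$, hence $o(\delta^{1/2})$, so the rescaled limit is antisymmetric; call ${\cal Q}$ the associated axial vector.

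Next I would establish \eqref{VIs}. The first two identities come from passing to the limit in the relation $\ds\frac{d{\cal W}^{(m)}_\delta}{dx_3} = (\GQ_\delta-\GI_3)\Ge_3$ (the definition \eqref{WBWS}), using the weak $H^1$ convergence of $\delta^{-1/2}(\GQ_\delta-\GI_3)$ and reading off the first two components of $\GA_{\cal Q}\Ge_3$, which are ${\cal Q}_2$ and $-{\cal Q}_1$; combined with $\ds\frac{d{\cal W}^{(m)}_\delta}{dx_3}\to\frac{d{\cal W}}{dx_3}$ this gives ${\cal W}_1'={\cal Q}_2$, ${\cal W}_2'=-{\cal Q}_1$. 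The nonlinear constraint is the delicate point: start from the exact identity \eqref{dW31}, namely $\ds\frac{d{\cal W}^{(m)}_{\delta,3}}{dx_3} = -\frac12|(\GQ_\delta-\GI_3)\Ge_3|^2$, multiply by $\delta^{-1}$, and pass to the limit. The left side converges weakly to ${\cal W}_3'$ in $L^2$. For the right side, $\delta^{-1}|(\GQ_\delta-\GI_3)\Ge_3|^2 = |\delta^{-1/2}(\GQ_\delta-\GI_3)\Ge_3|^2$, and since $\delta^{-1/2}(\GQ_\delta-\GI_3)\to\GA_{\cal Q}$ \emph{strongly} in $H^1(0,L)$ hence strongly in $L^2$ (and in fact in $L^4$ by the 1D Sobolev embedding), the square converges strongly in $L^1$ (or $L^2$) to $|\GA_{\cal Q}\Ge_3|^2 = {\cal Q}_1^2+{\cal Q}_2^2 = |{\cal W}_2'|^2+|{\cal W}_1'|^2$. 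This yields $\ds\frac{d{\cal W}_3}{dx_3} = -\frac12(|{\cal W}_1'|^2+|{\cal W}_2'|^2)$ as claimed. I expect this strong-convergence-of-the-square argument to be the main obstacle, precisely because the stretching is quadratically slaved to the bending and one must be careful that the $H^2$ bound on $\delta^{-1/2}{\cal W}^{(m)}_{\delta,\alpha}$ (equivalently the $H^1$ bound on $\delta^{-1/2}(\GQ_\delta-\GI_3)$) is what upgrades the weak convergence of $\GQ_\delta$ to a strong one, making the passage to the limit in the nonlinear term legitimate.

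Then I would prove the junction conditions \eqref{V=WQ(0)}. From \eqref{QW1} we have $|{\cal W}^{(m)}_{\delta,\alpha}(0)|\le C\delta^{7/4}$, $|||\GQ_\delta(0)-\GI_3|||\le C\delta^{3/4}$, $|{\cal W}_{\delta,3}(0)-\widetilde{\cal U}_{\delta,3}(0,0)|\le C\delta^{3/2}$; dividing by $\delta^{1/2}$ (resp. by $\delta$ for the third component) and using the continuity of the trace at $x_3=0$ for the weak-$H^1$-convergent sequences gives ${\cal W}_\alpha(0)=0$, ${\cal Q}(0)=0$, and ${\cal W}_3(0)=\lim\delta^{-1}\widetilde{\cal U}_{\delta,3}(0,0)$; the latter limit is ${\cal U}_3(0,0)$ because $\delta^{-1}{\cal U}_{\delta,3}\to{\cal U}_3$ strongly in $H^1(\omega)$, $\|{\cal U}_{\delta,3}-\widetilde{\cal U}_{\delta,3}\|_{H^1(\omega)}\le C\delta^{-1/2}\Gd(v_\delta,\Omega_\delta)\le C\delta^2$ from \eqref{4002} and \eqref{energie}, and the point value is controlled via $H^2(D)$ (again \eqref{4002}) so that $\delta^{-1}\widetilde{\cal U}_{\delta,3}(0,0)\to{\cal U}_3(0,0)$; recall ${\cal U}_3\in H^2(\omega)$ so the point evaluation is meaningful. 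Finally ${\cal W}^{(s)}(0)=0$ follows from ${\cal W}^{(s)}_\delta(0)=0$ for all $\delta$ and the weak $H^1$ convergence.

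For the last part, the convergence \eqref{GSVcase2} of the rescaled Green–St Venant tensor and the formula \eqref{HatEcase2}, I would follow the scheme already used in \cite{BGRod} and in \cite{BGJE} for the plate: write $(\nabla v_\delta)^T\nabla v_\delta-\GI_3 = (\nabla v_\delta-\GQ_\delta)^T\GQ_\delta + \GQ_\delta^T(\nabla v_\delta-\GQ_\delta) + (\nabla v_\delta-\GQ_\delta)^T(\nabla v_\delta-\GQ_\delta)$, using $\GQ_\delta^T\GQ_\delta=\GI_3$. The quadratic remainder is $O(\delta^{5})$ in $L^1$ by \eqref{EstmRod} and \eqref{energie}, hence negligible after dividing by $\delta^{3/2}$; the cross terms are handled by inserting the decomposition $\nabla v_\delta = \GQ_\delta + \nabla\overline{\overline{v}}_\delta + (\text{terms from differentiating }{\cal W}_\delta\text{ and }\GQ_\delta(x_1\Ge_1+x_2\Ge_2))$, rescaling with $P_\delta$, and passing to the limit using the convergences in \eqref{ConvPl}: the $\delta^{-5/2}P_\delta(\overline{\overline{v}}_\delta)\rightharpoonup\overline{\overline{v}}$ supplies the $\gamma_{\alpha\beta}(\overline{\overline{v}}_r)$ and $\partial\overline{\overline{v}}_{r,3}/\partial X_\alpha$ entries, $\delta^{-1/2}(\GQ_\delta-\GI_3)\to\GA_{\cal Q}$ (and its derivative, which carries ${\cal Q}_3'$) supplies the torsion terms $\mp\frac12 X_\beta{\cal Q}_3'$, $\delta^{-1/2}{\cal W}^{(m)}_{\delta,\alpha}\rightharpoonup{\cal W}_\alpha$ in $H^2$ supplies the bending curvatures $-X_\alpha{\cal W}_\alpha''$ in the $(3,3)$ slot, and $\delta^{-3/2}{\cal W}^{(s)}_\delta\rightharpoonup{\cal W}^{(s)}$ in $H^1$ supplies the $\frac12({\cal W}^{(s)}_\alpha)'$ and $({\cal W}^{(s)}_3)'$ terms. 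Keeping track of which power of $\delta$ each contribution carries and checking that only the listed terms survive at order $\delta^{3/2}$ — while the stretching contribution ${\cal W}^{(m)}_{\delta,3}{}' = O(\delta)$ combines with the bending to cancel at the right order — is the bookkeeping heart of this step, but it is routine once the convergences \eqref{ConvPl} are in hand. This completes the proof. $\qed$
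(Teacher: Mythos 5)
Your proof follows essentially the same route as the paper: extract a subsequence from the uniform bounds \eqref{EstmRod2}--\eqref{EstmRod3}, read off the first two identities of \eqref{VIs} from ${d{\cal W}^{(m)}_\delta/dx_3}=(\GQ_\delta-\GI_3)\Ge_3$, pass to the limit in the exact quadratic identity \eqref{dW31} to get the nonlinear constraint, derive the junction conditions from \eqref{QW1} together with the convergence of $\delta^{-1}\widetilde{\cal U}_{\delta,3}$ in $C^0(D)$, and invoke the rod analysis of \cite{BGRod} for the Green--St Venant tensor.

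One small inaccuracy worth fixing: you assert that $\delta^{-1/2}(\GQ_\delta-\GI_3)\to\GA_{\cal Q}$ \emph{strongly} in $H^1(0,L)$, and deduce strong $L^2$/$L^4$ convergence from that. In fact the estimate $\|\GQ_\delta-\GI_3\|_{H^1(-\delta,L;\R^9)}\le C\delta^{1/2}$ only gives boundedness of the rescaled sequence in $H^1$, hence weak convergence; nothing established here upgrades this to strong $H^1$. What you actually need --- and what the paper uses --- is the compact embedding $H^1(0,L)\hookrightarrow C^0([0,L])$ (or $L^4$) in one dimension, which turns the weak $H^1$ convergence into strong $L^\infty$ (resp.\ $L^4$) convergence of $\delta^{-1/2}(\GQ_\delta-\GI_3)\Ge_3$; that is sufficient to pass to the limit in $\delta^{-1}|(\GQ_\delta-\GI_3)\Ge_3|^2$. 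So your conclusion stands, but the intermediate claim of strong $H^1$ convergence should be replaced by ``weak in $H^1$, hence strong in $C^0$ (or $L^4$) by Rellich.'' The rest of your argument, including the explicit justification of the antisymmetry of $\GA_{\cal Q}$ (a point the paper leaves implicit), is correct.
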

\begin{proof}   
First, taking into account \eqref{EstmRod2}, \eqref{EstmRod3} and upon extracting a subsequence it follows that the convergences  \eqref{ConvPl} hold true. First, due to the definition of ${\cal W}^{(m)}_\delta$ and the weak convergence in $H^1(0,L;\R^9)$ of the sequence $\ds {1\over \delta^{1/2}}\big(\GQ_{\delta}-\GI_3)$ towards the antisymmetric matrix  $\GA_{\cal Q}$ we deduce that $\ds {d{\cal W}\over dx_3}={\cal Q}\land \Ge_3$ wich gives the two first equalities in \eqref{VIs}. Then, the strong convergence in $L^\infty(0,L;\R^3)$ of the sequence $\ds {1\over \delta^{1/2}}\big(\GQ_{\delta}-\GI_3)\Ge_3$ towards ${\cal Q}\land\Ge_3$, hence $\ds {1\over \delta}|(\GQ_{\delta}-\GI_3)\Ge_3|^2$ convergences towards $|{\cal Q}\land\Ge_3|^2=\ds \Big|{d{\cal W}_1\over dx_3}\Big|^2+\Big|{d{\cal W}_2\over dx_3}\Big|^2$ strongly in $L^\infty(0,L)$. Finally  using equality \eqref{dW31} we obtain the last equality in  \eqref{VIs}.   The junction conditions on ${\cal Q}$ and  ${\cal W}_{\alpha}$ are immediate consequences of \eqref{QW1} and the convergences   \eqref{ConvPl}. 

In order to obtain the junction condition between the  bending in the plate and the stretching in  the rod note first that the  sequence $\ds{1\over \delta}\widetilde{\cal U}_{\delta,3}$  converges strongly in $H^1(\omega)$ to ${\cal U}_3$ because of \eqref{4002} and the first convergence in \eqref{4.9}. Besides this sequence is uniformly bounded in $H^2(D)$, hence it converges strongly to the same limit  ${\cal U}_3$ in $C^0(D)$.  
Moreover  the  weak convergence of the sequence $\ds{1\over \delta}{\cal W}^{(m)}_{\delta,3}$  in $H^1(0, L)$, implies the convergence of $\ds{1\over \delta}{\cal W}^{(m)}_{\delta,3}(0)={1\over \delta}{\cal W}_{\delta,3}(0)$ to ${\cal W}_{3}(0)$. Using the third estimate in \eqref{QW1} gives the last condition in \eqref{V=WQ(0)}.

 Once the convergences \eqref{ConvPl} are established, the limit of the rescaled Green-St Venant strain tensor of the sequence $v_\delta$ is analyzed  in \cite{BGRod} (see Subsection 3.3) and it gives \eqref{HatEcase2}.
\end{proof}

\section{Asymptotic behavior of the sequence $\ds{m_\delta\over\delta^{5}}$.}

The goal of this section is to establish  Theorem \ref{theo9.1}. Let us first introduce a few notations. We set 
\begin{equation}\label{deflim2}
\begin{aligned}
\D_0=\Big\{&({\cal U} ,{\cal W},{\cal Q}_3)\in H^1(\omega;\R^3)\times H^1(0,L ; \R^3) \times H^1(0,L)\;|\; \\
&{\cal U}_3\in H^2(\omega),\quad {\cal W}_\alpha\in H^2(0,L),\quad {\cal U}=0,\quad {\partial {\cal U}_3\over \partial x_\alpha}=0 \quad  \hbox{on} \enskip  \gamma_0,\\
&{d{\cal W}_{3}\over dx_3}+{1\over 2}\Big[\Big|{d{\cal W}_{1}\over dx_3}\Big|^2+\Big|{d{\cal W}_{2}\over dx_3}\Big|^2\Big]=0\quad \hbox{in}\enskip ]0,L[,\\
& {\cal W}_3(0)={\cal U}_3(0,0),\qquad  {\cal W}_\alpha(0)={d{\cal W}_\alpha\over dx_3}(0)={\cal Q}_3(0)=0\Big\}
\end{aligned}\end{equation} Let us notice that $\D_0$ is a closed subset of $H^1(\omega;\R^3)\times H^1(0,L ; \R^3) \times H^1(0,L)$.
\vskip 1mm
We introduce below the "limit" elastic energies for the plate and the rod whose expressions are well known for such structures\footnote{$E$, $\nu$ are the Young modulus and the Poisson's ratio of the plate and the rod.} 
 
\begin{equation}\label {JP}
\begin{aligned}
 {\cal J}_p({\cal U})&= {E\over 3(1-\nu^2)}\int_\omega\Big[(1-\nu)\sum_{\alpha,\beta=1}^2\Big|{\partial^2{\cal U}_3\over \partial x_\alpha\partial x_\beta}\Big|^2+\nu\big(\Delta{\cal U}_3\big)^2\Big]\\
&+{E\over (1-\nu^2)}\int_\omega\Big[(1-\nu)\sum_{\alpha,\beta=1}^2\big|{\cal Z}_{\alpha\beta}\big|^2+\nu\big({\cal Z}_{11}+{\cal Z}_{22}\big)^2\Big],\\
{\cal J}_r({\cal W}_1,{\cal W}_2,{\cal Q}_3) &= {E\pi\over 8}\int_0^L\Big[ \Big|{d^2{\cal W}_1 \over dx_3^2}\Big|^2+\Big|{d^2{\cal W}_2 \over dx_3^2}\Big|^2\Big]+{\mu\pi\over 8}\int_0^L\Big|{d{\cal Q}_3 \over dx_3}\Big|^2
\end{aligned}\end{equation}
where  ${\cal Z}_{\alpha\beta}$ is given by 
\begin{equation*}
{\cal Z}_{\alpha\beta}=\gamma_{\alpha\beta}({\cal U})+{1\over 2}{\partial {\cal U}_3\over \partial x_\alpha}{\partial {\cal U}_3\over \partial x_\beta}.
\end{equation*}
The total energy of the plate-rod structure is given by  the functional ${\cal J}$ defined over $\D_0$
\begin{equation}\label {J2}
 {\cal J}({\cal U},{\cal W},{\cal Q}_3)= {\cal J}_p({\cal U})+{\cal J}_r({\cal W}_1,{\cal W}_2,{\cal Q}_3) -{\cal L}({\cal U},{\cal W},{\cal Q}_3)
\end{equation} 
with
\begin{equation}\label{FormLin}
\begin{aligned}
 {\cal L}({\cal U},{\cal W},{\cal Q}_3)&=2\int_{\omega} f_p\cdot {\cal U} +\pi \int_0^L f_r\cdot {\cal W}dx_3+{\pi \over 2} \int_0^L{g_\alpha}\cdot\big({\cal Q}\land\Ge_\alpha\big)dx_3
 \end{aligned}
 \end{equation}  where
 $${\cal Q}=- {d{\cal W}_2\over dx_3}\Ge_1+ {d{\cal W}_1\over dx_3}\Ge_2+{\cal Q}_3\Ge_3.$$ 

Below we  prove the existence of at least a minimizer of  $ {\cal J}$. 
\begin{lemma}\label{LemExistence}  There exist two constants $C^{*}_p$, $C^{*}_r$  such that, if $(f_{p,1},f_{p,2})$ satisfies
\begin{equation}\label{CstPl}
||f_{p,1}||^2_{L^2(\omega)}+||f_{p,2}||^2_{L^2(\omega)} <  C^*_p
\end{equation} and if $f_{r,3}$ satisfies
\begin{equation}\label{CstPt}
||f_{r,3}||_{L^2(0,L)} <  C^{*}_r
\end{equation} then the minimization problem
\begin{equation}\label{L71}
\min_{({\cal U},{\cal W},{\cal Q}_3)\in \D_0} {\cal J}({\cal U},{\cal W},{\cal Q}_3)
\end{equation} admits at least a solution.
\end{lemma}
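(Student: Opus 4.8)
The plan is to apply the direct method of the calculus of variations to the functional $\mathcal{J}$ on the closed set $\mathcal{D}_0$. First I would take a minimizing sequence $({\cal U}^n,{\cal W}^n,{\cal Q}_3^n)\in\mathcal{D}_0$ for $\mathcal{J}$, noting that since $\mathcal{J}(0,0,0)=0$ we may assume $\mathcal{J}({\cal U}^n,{\cal W}^n,{\cal Q}_3^n)\le 0$. The goal is then a priori bounds: control $\|{\cal U}_3^n\|_{H^2(\omega)}$, $\|{\cal W}_1^n\|_{H^2}$, $\|{\cal W}_2^n\|_{H^2}$, $\|{\cal Q}_3^n\|_{H^1}$, the membrane terms $\|{\cal Z}_{\alpha\beta}^n\|_{L^2}$, and then $\|{\cal U}_\alpha^n\|_{H^1(\omega)}$ and $\|{\cal W}_3^n\|_{H^1}$ via the nonlinear constraint and the junction conditions.

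The heart of the argument is coercivity, and this is where the smallness hypotheses \eqref{CstPl}, \eqref{CstPt} enter. The quadratic parts ${\cal J}_p$ and ${\cal J}_r$ control, via the boundary conditions on $\gamma_0$ and Poincaré/Korn-type inequalities, the $H^2$ norms of ${\cal U}_3$ and of ${\cal W}_1,{\cal W}_2$, the $H^1$ norm of ${\cal Q}_3$, and $\sum\|{\cal Z}_{\alpha\beta}\|_{L^2}^2$. From the definition ${\cal Z}_{\alpha\beta}=\gamma_{\alpha\beta}({\cal U})+\frac12\partial_\alpha{\cal U}_3\,\partial_\beta{\cal U}_3$ together with control of $\nabla{\cal U}_3$ (hence of $\partial_\alpha{\cal U}_3\partial_\beta{\cal U}_3$ in $L^2$, using the Sobolev embedding $H^1(\omega)\hookrightarrow L^4(\omega)$) one recovers $\|\gamma_{\alpha\beta}({\cal U})\|_{L^2}$ and then, by the 2D Korn inequality with the clamping ${\cal U}=0$ on $\gamma_0$, the full $H^1(\omega;\R^3)$ norm of ${\cal U}$. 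For the rod, ${\cal W}_\alpha\in H^2$ with ${\cal W}_\alpha(0)=\tfrac{d{\cal W}_\alpha}{dx_3}(0)=0$ gives $\|{\cal W}_\alpha\|_{H^2}$ bounds from the bending energy; the constraint $\frac{d{\cal W}_3}{dx_3}=-\frac12(|{\cal W}_1'|^2+|{\cal W}_2'|^2)$ puts $\frac{d{\cal W}_3}{dx_3}$ in $L^1$ (indeed $L^2$, since ${\cal W}_\alpha'\in H^1\hookrightarrow L^\infty$), and with the junction value ${\cal W}_3(0)={\cal U}_3(0,0)$ — finite because $H^2(D)\hookrightarrow C^0(\overline D)$ — we bound $\|{\cal W}_3\|_{H^1}$. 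The linear form ${\cal L}$ is then estimated: the terms $2\int f_p\cdot{\cal U}$, $\pi\int f_{r,\alpha}{\cal W}_\alpha$, and $\frac\pi2\int g_\alpha\cdot({\cal Q}\wedge\Ge_\alpha)$ are each dominated by a constant times the norms just controlled, and crucially the "dangerous" contributions $2\int(f_{p,1}{\cal U}_1+f_{p,2}{\cal U}_2)$ and $\pi\int f_{r,3}{\cal W}_3$ — which a priori are only controlled by quadratic (not linear) powers of the energy, because $\|{\cal U}_\alpha\|$ and $\|{\cal W}_3\|$ involve squared quantities — can be absorbed into the left side precisely when $\|f_{p,1}\|_{L^2}^2+\|f_{p,2}\|_{L^2}^2$ and $\|f_{r,3}\|_{L^2}$ are small enough; this fixes $C^*_p$ and $C^*_r$. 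From $\mathcal{J}\le 0$ one then deduces a uniform bound on the whole vector of norms, so the minimizing sequence is bounded in $H^2(\omega)\times(H^2\times H^2\times H^1)(0,L)\times H^1(0,L)$ for the strong components and in $H^1(\omega)$ for ${\cal U}$.

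Next I would extract a weakly convergent subsequence: ${\cal U}^n\rightharpoonup{\cal U}$ in $H^1(\omega;\R^3)$ with ${\cal U}_3^n\rightharpoonup{\cal U}_3$ in $H^2(\omega)$, ${\cal W}_\alpha^n\rightharpoonup{\cal W}_\alpha$ in $H^2(0,L)$, ${\cal W}_3^n\rightharpoonup{\cal W}_3$ in $H^1(0,L)$, ${\cal Q}_3^n\rightharpoonup{\cal Q}_3$ in $H^1(0,L)$. By Rellich, the embeddings $H^2(\omega)\hookrightarrow\!\!\hookrightarrow H^1(\omega)$, $H^2(0,L)\hookrightarrow\!\!\hookrightarrow C^1([0,L])$, $H^1(0,L)\hookrightarrow\!\!\hookrightarrow C^0([0,L])$ are compact, so $\nabla{\cal U}_3^n\to\nabla{\cal U}_3$ strongly in $L^2$ (hence $L^4$), $\tfrac{d{\cal W}_\alpha^n}{dx_3}\to\tfrac{d{\cal W}_\alpha}{dx_3}$ uniformly, and all trace/point values pass to the limit. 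This handles closedness of $\mathcal{D}_0$: the boundary conditions on $\gamma_0$ and at $x_3=0$ are stable under weak/strong limits, and the nonlinear constraint $\tfrac{d{\cal W}_3}{dx_3}+\tfrac12(|{\cal W}_1'|^2+|{\cal W}_2'|^2)=0$ passes to the limit because ${\cal W}_\alpha'^n\to{\cal W}_\alpha'$ uniformly while $\tfrac{d{\cal W}_3^n}{dx_3}\rightharpoonup\tfrac{d{\cal W}_3}{dx_3}$ weakly in $L^2$; the value ${\cal W}_3(0)={\cal U}_3(0,0)$ survives since point evaluation is continuous on $H^1(0,L)$ and on $H^2(D)$. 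For lower semicontinuity of $\mathcal{J}$: the quadratic forms ${\cal J}_p$ and ${\cal J}_r$ are convex and continuous in the strong topology of the relevant spaces, hence weakly lower semicontinuous; the membrane integrand $\int_\omega[(1-\nu)\sum|{\cal Z}_{\alpha\beta}|^2+\nu({\cal Z}_{11}+{\cal Z}_{22})^2]$ is lsc because ${\cal Z}_{\alpha\beta}^n=\gamma_{\alpha\beta}({\cal U}^n)+\tfrac12\partial_\alpha{\cal U}_3^n\partial_\beta{\cal U}_3^n\rightharpoonup{\cal Z}_{\alpha\beta}$ weakly in $L^2$ (the nonlinear term converges strongly in $L^2$ by the $L^4$ strong convergence of $\nabla{\cal U}_3^n$, the linear term weakly in $L^2$) and a nonnegative quadratic form is weakly lsc; and ${\cal L}$ is weakly continuous (all factors converge strongly against the remaining weakly convergent one). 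Therefore $\mathcal{J}({\cal U},{\cal W},{\cal Q}_3)\le\liminf\mathcal{J}({\cal U}^n,{\cal W}^n,{\cal Q}_3^n)=\inf_{\mathcal{D}_0}\mathcal{J}$, so the limit is a minimizer.

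The main obstacle is the coercivity step: the functional is genuinely non-coercive without the smallness assumptions, since ${\cal L}$ contains terms that, after using the constraint and the junction $\eqref{V=WQ(0)}$, are bounded only by the \emph{square} of the energy norm (through $\|{\cal U}_\alpha\|_{H^1}$, which is tied to $\|\nabla{\cal U}_3\|_{L^4}^2$ via $\gamma_{\alpha\beta}$ and ${\cal Z}_{\alpha\beta}$, and through $\|{\cal W}_3\|_{H^1}$, tied to $\|{\cal W}_\alpha'\|^2$). Making the absorption rigorous requires carefully tracking these quadratic dependencies and choosing $C^*_p$, $C^*_r$ so that the absorbed coefficients stay below the ellipticity constants of ${\cal J}_p$ and ${\cal J}_r$; the bookkeeping parallels that already carried out in the proof of Theorem \ref{TH53} and in \cite{DB_GGI}, \cite{BGRod}.
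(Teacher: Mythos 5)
Your proposal is correct and follows essentially the same route as the paper: take a minimizing sequence with $\mathcal{J}\le 0$, derive a priori bounds in which $\|{\cal U}_\alpha\|_{H^1}$ and $\|{\cal W}_3\|_{H^1}$ are controlled only quadratically by the elastic energies (via the Von K\'arm\'an coupling for ${\cal Z}_{\alpha\beta}$, the 2D Korn inequality, the rod stretching-bending constraint, and the junction value ${\cal W}_3(0)={\cal U}_3(0,0)$), absorb the resulting quadratic contributions of $f_{p,\alpha}$ and $f_{r,3}$ under the smallness hypotheses to get coercivity, then extract a weakly convergent subsequence and pass to the limit using compact embeddings (giving strong $W^{1,4}$ convergence of ${\cal U}_3^n$ and $C^1$ convergence of ${\cal W}_\alpha^n$) together with weak lower semicontinuity of the quadratic forms. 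You identify the same dangerous terms, the same constants $C^*_p,C^*_r$ coming from the quadratic coefficients in the Korn and ${\cal W}_3$ estimates, and the same stability of the constraint set $\D_0$; if anything your statement is slightly more careful than the paper's, since the functional is weakly sequentially \emph{lower semicontinuous} rather than continuous, which is all that is needed.
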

\begin{proof}  Due to the boundary conditions on ${\cal U}_3$ in $\D_0$, we immediately have
\begin{equation}\label{NU3}
||{\cal U}_3||^2_{H^2(\omega)}\le C{\cal J}_p({\cal U}).
\end{equation} Then we get
\begin{equation}\label{NU4}
\begin{aligned}
\sum_{\alpha,\beta=1}^2||\gamma_{\alpha,\beta}({\cal U})||^2_{L^2(\omega)} & \le C{\cal J}_p({\cal U})+ C\big\|\nabla{\cal U}_3\big\|^4_{L^4(\omega;\R^2)}\\
& \le C{\cal J}_p({\cal U})+ C[{\cal J}_p({\cal U})]^2.
\end{aligned}\end{equation} Thanks to the 2D Korn's inequality we obtain
\begin{equation}\label{NU5}
||{\cal U}_1||^2_{H^1(\omega)}+||{\cal U}_2||^2_{H^1(\omega)}  \le C{\cal J}_p({\cal U})+ C_{p}[{\cal J}_p({\cal U})]^2.
\end{equation} 

\noindent  Again, due to the boundary conditions on ${\cal W}_\alpha$ and ${\cal Q}_3$ in $\D_0$, we immediately have
\begin{equation}\label{NU6}
||{\cal W}_1||^2_{H^2(0,L)}+||{\cal W}_2||^2_{H^2(0,L)}+||{\cal Q}_3||^2_{H^1(0,L)}\le C{\cal J}_r({\cal W}_1,{\cal W}_2,{\cal Q}_3).
\end{equation} Then, due to the definition of $\D_0$ and \eqref{NU6} we get
\begin{equation}\label{NU7}
\begin{aligned}
\Big\|{d{\cal W}_3\over dx_3}\Big\|^2_{L^2(0,L)} & \le  C\Bigl\{\Big\|{d{\cal W}_1\over dx_3}\Big\|^4_{L^4(0,L)}+\Big\|{d{\cal W}_2\over dx_3}\Big\|^4_{L^4(0,L)}\Big\} \le  C[{\cal J}_r({\cal W}_1,{\cal W}_2,{\cal Q}_3)]^2.
\end{aligned}\end{equation} From the above inequality and \eqref{NU3} we obtain
\begin{equation}\label{NU8}
\begin{aligned}
\big\|{\cal W}_3\big\|^2_{L^2(0,L)} &\le C|{\cal W}_3(0)|^2+C\Big\|{d{\cal W}_3\over dx_3}\Big\|^2_{L^2(0,L)}\\
& \le C{\cal J}_p({\cal U})+C_r[{\cal J}_r({\cal W}_1,{\cal W}_2,{\cal Q}_3)]^2.
\end{aligned}\end{equation}

Since $ {\cal J}(0, 0, 0)=0$, let us consider a minimizing sequence $({\cal U}^{(N)},{\cal W}^{(N)},{\cal Q}^{(N)}_3)\in \D_0$ satisfying ${\cal J}({\cal U}^{(N)},{\cal W}^{(N)},{\cal Q}^{(N)}_3)\le 0$ 
\begin{equation*}
m=\inf_{({\cal U},{\cal W},{\cal Q}_3)\in\D_0}{\cal J}({\cal U},{\cal W},{\cal Q}_3)=\lim_{N\to+\infty}{\cal J}({\cal U}^{(N)},{\cal W}^{(N)},{\cal Q}^{(N)}_3)
\end{equation*} where $m\in [-\infty, 0]$.

\noindent With the help of \eqref{NU3}-\eqref{NU8} we get
\begin{equation}
\begin{aligned}
&\hskip-5mm{\cal J}_p( {\cal U}^{(N)})+ {\cal J}_r({\cal W}^{(N)}_1{\cal W}^{(N)}_2, {\cal Q}^{(N)}_3)\le  C ||f_{p,3}||\sqrt{{\cal J}_p({\cal U}^{(N)})}\\
&+2\big(||f_{p,1}||^2_{L^2(\omega)}+||f_{p,2}||^2_{L^2(\omega)}\big)^{1/2}\big(C\sqrt{{\cal J}_p({\cal U}^{(N)})}+\sqrt{C_p}{\cal J}_p({\cal U}^{(N)})\big)\\
&+C\sum_{\alpha=1}^2\big(||f_{r,\alpha}||_{L^2(0,L)}+||g_{\alpha}||_{L^2(0,L;\R^3)}\big)\sqrt{{\cal J}_r({\cal W}^{(N)}_1{\cal W}^{(N)}_2,{\cal Q}^{(N)}_3)}\\
&+\pi||f_{r,3}||_{L^2(0,L)}\big(C\sqrt{{\cal J}_r({\cal W}^{(N)}_1{\cal W}^{(N)}_2,{\cal Q}^{(N)}_3)}+\sqrt{C_r}{\cal J}_r({\cal W}^{(N)}_1{\cal W}^{(N)}_2,{\cal Q}^{(N)}_3)\big)
\end{aligned}\end{equation} Choosing $\ds C^*_p={1\over 2C_p}$ and $\ds C^{*}_r={1\over \pi\sqrt{C_r}}$, if the applied forces satisfy \eqref{CstPl} and \eqref{CstPt}  then the following estimates hold true
\begin{equation}
\begin{aligned}
&||{\cal U}^{(N)}_3||_{H^2(\omega)}+||{\cal U}^{(N)}_1||_{H^1(\omega)}+||{\cal U}^{(N)}_2||_{H^1(\omega)} +||{\cal W}^{(N)}_1||_{H^2(0,L)}\\
+ &||{\cal W}^{(N)}_2||_{H^2(0,L)}+||{\cal Q}^{(N)}_3||_{H^1(0,L)}+||{\cal W}^{(N)}_3||_{H^1(0,L)}\le C
\end{aligned}\end{equation} where the constant $C$ does not depend on $N$.
\vskip 1mm
As a consequence, there exists $({\cal U}^{(*)},{\cal W}^{(*)},{\cal Q}^{(*)}_3)\in H^1(\omega;\R^3)\times H^1(0,L;\R^3)\times H^1(0,L)$ such that for a subsequence 
\begin{equation*}
\begin{aligned}
{\cal U}^{(N)}_3 &\rightharpoonup {\cal U}^{(*)}_3\quad \hbox{weakly in}\enskip H^2(\omega) \;\hbox{and strongly in }\; W^{1,4}(\omega),\\
{\cal U}^{(N)}_\alpha &\rightharpoonup {\cal U}^{(*)}_\alpha\quad \hbox{weakly in}\enskip H^1(\omega),\\
{\cal W}^{(N)}_\alpha&\rightharpoonup {\cal W}^{(*)}_\alpha\quad \hbox{weakly in}\enskip H^2(0,L) \;\hbox{and strongly in }\; W^{1,4}(0,L),\\
{\cal Q}^{(N)}_3&\rightharpoonup {\cal Q}^{(*)}_3\quad \hbox{weakly in}\enskip H^1(0,L),\\
{\cal W}^{(N)}_3&\rightharpoonup {\cal W}^{(*)}_3\quad \hbox{weakly in}\enskip H^1(0,L).
\end{aligned}\end{equation*} Notice that we also get the following convergences:
$${\cal Z}^{(N)}_{\alpha\beta}\rightharpoonup {\cal Z}^{(*)}_{\alpha\beta}=\gamma_{\alpha\beta}({\cal U}^{(*)})+{1\over 2}{\partial {\cal U}^{(*)}_3\over \partial x_\alpha}{\partial {\cal U}^{(*)}_3\over \partial x_\beta} \qquad \hbox{weakly in}\enskip L^2(\omega).$$
The above convergences show that $({\cal U}^{(*)},{\cal W}^{(*)},{\cal Q}^{(*)}_3)\in \D_0$.
Finally, since ${\cal J}$ is weakly sequentially continuous in 
$$ H^1(\omega;\R^2) \times H^2(\omega)\times L^2(\omega;\R^3)\times H^2(0,L;\R^2)\times H^1(0,L;\R^2)$$ with respect to 
$$( {\cal U}_1, {\cal U}_2,{\cal U}_3, {\cal Z}_{11}, {\cal Z}_{12}, {\cal Z}_{22}, {\cal W}_1, {\cal W}_2, {\cal W}_3,{\cal Q}_3)$$ The above weak and strong converges imply that
\begin{equation*}
{\cal J}({\cal U}^{(*)},{\cal W}^{(*)},{\cal Q}^{(*)}_3)=m
=\min_{({\cal U},{\cal W},{\cal Q}_3)\in\D_0}{\cal J}({\cal U},{\cal W},{\cal Q}_3)
\end{equation*} which ends the proof of the lemma.
\end{proof}

\begin {theorem}\label{theo9.1}
We have 
\begin{equation}\label{res2}
\lim_{\delta\to 0}{m_{\delta}\over \delta^{5}}=\min_{({\cal U},{\cal W},{\cal Q}_3)\in \D_0 } {\cal J}({\cal U},{\cal W},{\cal Q}_3),
\end{equation}
where the functional ${\cal J}$ is defined by \eqref{J2}.
\end{theorem}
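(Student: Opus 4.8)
\medskip
\noindent\textbf{Plan of proof.} The statement is of $\Gamma$--convergence type: the plan is to prove separately the lower bound $\liminf_{\delta\to 0}m_\delta/\delta^5\ge\min_{\D_0}{\cal J}$ and the upper bound $\limsup_{\delta\to 0}m_\delta/\delta^5\le\min_{\D_0}{\cal J}$, the right--hand minimum existing by Lemma \ref{LemExistence} and $m_\delta/\delta^5$ being bounded by \eqref{mdelta}.

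\smallskip
\noindent\emph{Lower bound.} Along a subsequence realising the $\liminf$, I would pick $v_\delta\in\D_\delta$ with $J_\delta(v_\delta)\le m_\delta+\delta^6$. Rerunning the proof of Theorem \ref{TH53} (the extra $\delta^6$ is of lower order and is absorbed by the same Young--inequality argument as the $C\delta^5$ terms there), these deformations still satisfy \eqref{energie}, so for a further subsequence all the convergences of Section~6 and Lemma~\ref{Lemma 6.2. } hold, producing limit fields $({\cal U},{\cal W},{\cal Q}_3)$ which belong to $\D_0$ by \eqref{CLURLimit}, \eqref{4.100}, \eqref{VIs} and \eqref{V=WQ(0)}. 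For the stored energy I would combine the weak $L^1$ convergences of the rescaled Green--St~Venant tensors towards $\GE_p$ and $\GE_r$ with the coercivity \eqref{HatW4}: on the subset of $\Omega$ (resp.\ of $B$) where $\nabla v_\delta$ is within, say, $\delta^{1/4}$ of $SO(3)$ --- whose complement has vanishing measure by Chebyshev and \eqref{energie} --- the convergence upgrades to weak $L^2$, so weak lower semicontinuity of $w\mapsto\int Q(w)$ yields
$$\liminf_{\delta\to 0}\frac1{\delta^5}\int_{{\cal S}_\delta}\widehat{W}(\nabla v_\delta)\ \ge\ \int_\Omega Q(\GE_p)+\int_B Q(\GE_r),$$
the overlap $C_\delta$ being of measure $O(\delta^3)$ and carrying energy $O(\delta^6)$, hence negligible. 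Minimising $\int_\Omega Q(\GE_p)$ over the free warping entries $\partial_{X_3}\overline{u}_{p,i}$ and $\int_B Q(\GE_r)$ over $\gamma_{\alpha\beta}(\overline{\overline{v}}_r)$, $\partial_{X_\alpha}\overline{\overline{v}}_{r,3}$ and $d{\cal W}^{(s)}/dx_3$ produces exactly the reduced densities in \eqref{JP} --- these are the classical plate, resp.\ bending--torsion rod, relaxations, already carried out in \cite{GSP}, \cite{BGJE}, \cite{BGRod} --- so the stored part is bounded below by ${\cal J}_p({\cal U})+{\cal J}_r({\cal W}_1,{\cal W}_2,{\cal Q}_3)$. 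The loading term $\delta^{-5}\int_{{\cal S}_\delta}f_\delta\cdot u_\delta$ converges to ${\cal L}({\cal U},{\cal W},{\cal Q}_3)$ by the scalings \eqref{ForceP} and the computation already performed inside the proof of Theorem \ref{TH53}. Hence $\liminf m_\delta/\delta^5\ge{\cal J}({\cal U},{\cal W},{\cal Q}_3)\ge\min_{\D_0}{\cal J}$.

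\smallskip
\noindent\emph{Upper bound.} Fix $({\cal U},{\cal W},{\cal Q}_3)\in\D_0$. By density I would first reduce to smooth data (approximate ${\cal U}$, ${\cal W}_1$, ${\cal W}_2$, ${\cal Q}_3$ by smooth fields satisfying the boundary and junction conditions, recover ${\cal W}_3$ from the constraint in \eqref{VIs}, and invoke the continuity of ${\cal J}$ on $\D_0$). Then I would build $v_\delta=I_d+u_\delta\in\D_\delta$ by the natural ansatz: in the plate, a Kirchhoff--Love / von K\'arm\'an displacement realising ${\cal U}_\alpha$ at order $\delta^2$, ${\cal U}_3$ and ${\cal R}$ at order $\delta$, corrected by a warping $\delta^3\overline{u}_p(x_1,x_2,x_3/\delta)$ (the minimiser found in the lower bound) so that the rescaled strain converges to $\GE_p$; in the rod, an elementary deformation ${\cal V}_\delta+\GQ_\delta(x_1\Ge_1+x_2\Ge_2)$ with $\GQ_\delta\in SO(3)$ encoding the bending $\delta^{1/2}{\cal W}_\alpha$ and the torsion $\delta^{1/2}{\cal Q}_3$ --- so that \eqref{dW31} automatically forces ${\cal W}_{\delta,3}\sim\delta{\cal W}_3$ --- corrected by a warping at order $\delta^{5/2}$ realising $\GE_r$. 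The junction conditions built into $\D_0$ (namely ${\cal W}_\alpha(0)={d{\cal W}_\alpha\over dx_3}(0)={\cal Q}_3(0)=0$ and ${\cal W}_3(0)={\cal U}_3(0,0)$) are precisely what allows the two pieces to be matched on $C_\delta$ by a corrector supported there, contributing $O(\delta^6)$ to the energy since $|C_\delta|=O(\delta^3)$; near $\Gamma_{0,\delta}$ the conditions on ${\cal U}$ give $u_\delta=0$. By construction $\delta^{-5}\int_{\Omega_\delta}\widehat{W}(\nabla v_\delta)\to{\cal J}_p({\cal U})$, $\delta^{-5}\int_{B_\delta}\widehat{W}(\nabla v_\delta)\to{\cal J}_r({\cal W}_1,{\cal W}_2,{\cal Q}_3)$ and $\delta^{-5}\int_{{\cal S}_\delta}f_\delta\cdot u_\delta\to{\cal L}({\cal U},{\cal W},{\cal Q}_3)$, so $m_\delta/\delta^5\le J_\delta(v_\delta)/\delta^5\to{\cal J}({\cal U},{\cal W},{\cal Q}_3)$; taking the infimum over $\D_0$ gives $\limsup m_\delta/\delta^5\le\min_{\D_0}{\cal J}$. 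The two inequalities together establish \eqref{res2}.

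\smallskip
\noindent\emph{Main obstacle.} I expect the recovery sequence to be the hard part: producing a genuinely admissible $H^1$ deformation of the whole structure which simultaneously realises the plate and the rod limit strains \emph{and} is continuous across the junction $C_\delta$ with junction energy $o(\delta^5)$, all while respecting the nonlinear relation in the rod (itself forced by $\GQ_\delta\in SO(3)$) and the coupling ${\cal W}_3(0)={\cal U}_3(0,0)$; a secondary technical point is the density, in $\D_0$, of smooth fields satisfying that constraint and the junction conditions. On the lower--bound side the delicate step is passing from the weak $L^1$ bound on the rescaled strains to a usable weak $L^2$ convergence via the good--set/bad--set splitting and the quasi--quadratic behaviour of $\widehat{W}$ near $SO(3)$, together with the pointwise relaxation over the warpings needed to recover the exact constants of \eqref{JP}.
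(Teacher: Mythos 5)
Your plan coincides, step by step, with the proof actually given in the paper: the lower bound proceeds by extracting from a sequence realising the liminf, applying Theorem \ref{TH53} to get \eqref{energie}, invoking the convergences of Section~6 to identify a limit triplet in $\D_0$, passing to the liminf in the stored energy via weak lower semicontinuity of $\int Q$, and then relaxing over the warping entries to recover \eqref{JP}; the upper bound is proved by approximating a minimiser of ${\cal J}$ by smooth data that vanish (or freeze) near the junction, building an explicit recovery deformation with an $SO(3)$-valued rotation field in the rod (so that the stretching--bending relation of \eqref{VIs} holds automatically), and matching the two pieces on $C_\delta$ at cost $o(\delta^5)$.

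The one place where your route deviates from the paper's is the upgrade from weak $L^1$ to weak $L^2$ convergence of the rescaled Green--St~Venant tensors in the lower bound. You propose a good-set/bad-set splitting with Chebyshev, which is the argument one is forced to use for a general FJM-type stored energy where only $\Gd(v,\cdot)$ is controlled. For the St~Venant--Kirchhoff law the paper does something simpler and sharper: since $Q(E)\ge\frac{\mu}{4}\|E\|_F^2$ for symmetric $E$, the bound $\int_{{\cal S}_\delta}\widehat W(\nabla v_\delta)\le C\delta^5$ from \eqref{EstDist} immediately yields $\|\nabla v_\delta^T\nabla v_\delta-\GI_3\|_{L^2(\Omega_\delta)}+\|\nabla v_\delta^T\nabla v_\delta-\GI_3\|_{L^2(B_\delta)}\le C\delta^{5/2}$ (this is \eqref{6.55}), so the rescaled tensors are bounded, and hence weakly convergent, directly in $L^2$, with no truncation needed. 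Your detour is not wrong, but it invokes machinery the specific constitutive law makes superfluous; if you want your argument to be tight you should note that for \eqref{HatW2} the $L^2$ control on the strain is immediate from coercivity. Everything else in your proposal, including the identification of the obstacles (junction matching, density in $\D_0$ of smooth fields respecting the nonlinear constraint), accurately reflects what the paper actually does.
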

\begin{proof}
{\it Step 1.} In this step we show that 
\begin{equation}\label{step1}
\min_{({\cal U},{\cal W},{\cal Q}_3)\in \D_0 }  {\cal J}({\cal U},{\cal W},{\cal Q}_3)\le \liminf_{\delta\to0}{m_\delta\over \delta^{5}}.
\end{equation} 
Let   $(v_\delta)_{\delta>0}$  be  a sequence of deformations belonging to $\D_{\delta}$ and such that
\begin{equation}\label{Hypvdelta2}
\lim_{\delta\to 0}{J_{\delta}(v_\delta)\over \delta^{5}}=\liminf_{\delta\to0}{m_\delta\over \delta^{5}}.
\end{equation} One can always assume that $J_{\delta}(v_\delta)\le 0$ without loss of generality. From the analysis of the previous section and, in particular from  estimate \eqref{510} the sequence $v_\delta$ satisfies
 \begin{equation}\label{6.54}
\Gd( v_\delta , \Omega_{\delta})+\Gd( v_\delta ,  B_{\delta})\le C\delta^{5/2}.\end{equation} 
From \eqref{HatW2}-\eqref{QRQP} and estimates \eqref{EstDist},  we obtain
 \begin{equation}\label{6.55}
 \big\|\nabla v^T_\delta\nabla v_\delta-\GI_3\big\|_{L^2(\Omega_{\delta} ; \R^{3\times 3})}\le  C\delta^{5/2},\quad  \big\|\nabla v^T_\delta\nabla v_\delta-\GI_3\big\|_{L^2(B_{\delta} ; \R^{3\times 3})}\le  C\delta^{5/2}.
 \end{equation} 

 Firstly, for any fixed $\delta$, the displacement  $u_\delta=v_\delta-I_d$, restricted to $\Omega_\delta$, is decomposed as in Theorem \ref{Theorem 3.3.}.  Due to  estimate  \eqref{6.54}, we can apply the results of Subsection \ref{In the plate.} to the sequence $(v_\delta)$. As a consequence  there exist a subsequence (still indexed by $\delta$) and ${\cal U}^{(0)},\;{\cal R}^{(0)}\in H^1(\omega;\R^3)$ and $\overline{u}^{(0)}_p\in L^2(\omega;H^1(-1,1;\R^3))$,  such that the convergences \eqref{4.9} and \eqref{4.10} hold true. Due to \eqref{CLURLimit} and \eqref{4.100} the field ${\cal U}^{(0)}_3$ belongs to $H^2(\omega)$, and we have the boundary conditions 
\begin{equation}\label{CLU0}{\cal U}^{(0)}=0,\quad \nabla U^{(0)}_3=0,\qquad\hbox{on}\quad \gamma_0.
\end{equation} Subsection  \ref{In the plate.} and estimates in \eqref{6.55} also  show that 
 \begin{equation}\label{E0}
 {1\over 2\delta^{2}}\Pi_\delta\big(\nabla v_\delta^T\nabla v_\delta-\GI_3\big)\rightharpoonup \GE^{(0)}_p\quad \hbox{weakly in }\enskip L^2(\Omega;\R^9)
 \end{equation}
 where $\GE^{(0)}_p$ is defined 
 \begin{equation}\label{E00}
\GE^{(0)}_p=\begin{pmatrix}
\displaystyle  -X_3{\partial^2{\cal U}^{(0)}_3\over \partial x_1^2}+{\cal Z}^{(0)}_{11} & \displaystyle  -X_3 {\partial^2 {\cal U}^{(0)}_3\over \partial x_1\partial x_2}+{\cal Z}^{(0)}_{12}
&\displaystyle  {1\over 2}{\partial\overline{u}^{(0)}_{p,1} \over \partial X_3}\\
* & \displaystyle  -X_3{\partial^2 {\cal U}^{(0)}_3\over \partial x_2^2}+{\cal Z}^{(0)}_{22}  &\displaystyle  {1\over 2}{\partial\overline{u}^{(0)}_{p,2} \over \partial X_3}\\
* & *&  \displaystyle  {\partial\overline{u}^{(0)}_{p,3} \over \partial X_3}
\end{pmatrix}\end{equation}
with  
\begin{equation}\label{Z0}
{\cal Z}^{(0)}_{\alpha\beta}=\gamma_{\alpha\beta}({\cal U}^{(0)})+{1\over 2}{\partial {\cal U}^{(0)}_3\over \partial x_\alpha}{\partial {\cal U}^{(0)}_3\over \partial x_\beta}.\end{equation} 
\vskip 1mm
Secondly, still for $\delta$  fixed,  the displacement  $u_\delta=v_\delta-I_d$, restricted to $B_{\delta}$, is decomposed as in Theorem \ref{Theorem II.2.2.} and \eqref{WBWS}.  Again due  to the   estimate in   \eqref{6.54}, we can apply the results of Subsection \ref{In the rod.} to the sequence $(v_\delta)$. As a consequence  there exist a subsequence (still indexed by $\delta$) and ${\cal W}^{(0)},\;{\cal W}^{(s,0)},\;{\cal Q}^{(0)}\in H^1(0,L;\R^3)$ and $\overline{\overline{v}}_r^{(0)}\in L^2(0,L;H^1(D;\R^3))$  such that the convergences \eqref{ConvPl} hold true. As a consequence of  \eqref{VIs} the components  ${\cal W}^{(0)}_\alpha$ belong to $H^2(0,L)$ and we have
$${d{\cal W}^{(0)}\over dx_3}={\cal Q}^{(0)}\land\Ge_3\quad \hbox{ and } \quad  {d{\cal W}^{(0)}_{3}\over dx_3}(x_3)+{1\over 2}\Big[\Big|{d{\cal W}^{(0)}_{1}\over dx_3}(x_3)\Big|^2+\Big|{d{\cal W}^{(0)}_{2}\over dx_3}(x_3)\Big|^2\Big]=0.$$
The junction conditions \eqref{V=WQ(0)}  give
\begin{equation}\label{CJUQ0}
{\cal Q}^{(0)}(0)=0,\quad {\cal W}^{(0)}_\alpha(0)=0,\quad {\cal W}^{(s,0)}(0)=0,\qquad {\cal W}^{(0)}_3(0)={\cal U}^{(0)}_3(0,0).\end{equation}
\vskip 1mm
\noindent As a first consequence, the triplet $({\cal U}^{(0)}, {\cal W}^{(0)},{\cal Q}^{(0)}_3)$ belongs to $\D_0$.

 Subsection  \ref{In the rod.} and the second estimate \eqref{6.55} also  show that 
 \begin{equation}\label{612}
{1\over 2\delta^{3/2}}P_\delta \big((\nabla v_\delta)^T\nabla v_\delta-\GI_3\big)\rightharpoonup   \GE^{(0)}_r\qquad\hbox{weakly in}\quad  L^2(B ; \R^{3\times 3}),
\end{equation}  where the symmetric matrix $\GE^{(0)}_r$ is defined by  
\begin{equation}\label{613}
\GE^{(0)}_r= \begin{pmatrix}
\ds   \gamma_{11}(\overline{\overline{v}}_r^{(0)}) & \ds   \gamma_{12}(\overline{\overline{v}}_r^{(0)}) & \ds -{1\over 2}X_2{d{\cal Q}^{(0)}_3\over dx_3}+{1\over 2}{\partial\overline{\overline{v}}^{(0)}_{r,3}\over \partial X_1}+{1\over 2}{d{\cal W}^{(s,0)}_1\over dx_3}\\  \\
* & \ds   \gamma_{22}(\overline{\overline{v}}_r^{(0)}) & \ds {1\over 2}X_1{d{\cal Q}^{(0)}_3\over dx_3}+{1\over 2}{\partial\overline{\overline{v}}^{(0)}_{r,3}\over \partial X_2}+{1\over 2}{d{\cal W}^{(s,0)}_2\over dx_3}\\  \\
* & * &  \ds -X_1{d^2{\cal W}^{(0)}_1\over dx^2_3}-X_2{d^2{\cal W}^{(0)}_2\over dx^2_3}+{d{\cal W}^{(s,0)}_3\over dx_3} &
\end{pmatrix}.
\end{equation} 
\noindent In order to bound from below the quantity $\ds \liminf_{\delta \to 0}{J_\delta(v_\delta)\over \delta^{5}}$, using the assumptions on the forces \eqref{ForceP} and the convergences \eqref{4.9} and \eqref{ConvPl} we first have
\begin{equation}\label{728}
 \lim_{\delta\to 0}{1\over \delta^{5}}\int_{{\cal S}_{\delta}} f_{\delta}\cdot(v_\delta -I_d)= {\cal L}({\cal U}^{(0)},{\cal W}^{(0)},{\cal Q}^{(0)}_3)
 \end{equation}   where
$ {\cal L}({\cal U},{\cal W},{\cal Q}_3)$ is given by \eqref{FormLin} for any triplet in $\D_0$. 

\noindent As far as the elastic energy is concerned, we write
\begin{equation*}
\begin{aligned}
{1\over \delta^{5}} \int_{{\cal S}_{\delta}}\widehat{W}_\delta\big(\nabla v_\delta\big)&= {1\over \delta^{5}} \int_{{\Omega}_{\delta}}\widehat{W}_\delta\big(\nabla v_\delta\big)+{1\over \delta^{5}} \int_{B_{\delta}\setminus C_{\delta}}\widehat{W}_\delta\big(\nabla v_\delta\big)\\
= \int_{\Omega}Q\Big(\Pi_\delta\Big[{1\over \delta^{2}}&\big((\nabla v_\delta)^T\nabla v_\delta-\GI_3\Big]\Big)+ \int_{B} Q\Big( \chi_{B\setminus D\times ]0,\delta[}P_\delta\Big[{1\over \delta^{3/2}}\big((\nabla v_\delta)^T\nabla v_\delta-\GI_3\Big]\Big)\\
\end{aligned}
\end{equation*} From the weak convergences  of the Green-St Venant's tensors in \eqref{E0}, \eqref{612} and equality \eqref{728}, we obtain 
\begin{equation}\label{6.100}
\liminf_{\delta \to 0}{J_\delta(v_\delta)\over \delta^{5}}\ge\int_{\Omega}Q\big(\GE^{(0)}_p\big)+ \int_{B} Q\big(\GE^{(0)}_r\big)- {\cal L}({\cal U}^{(0)},{\cal W}^{(0)},{\cal Q}^{(0)}_3)
\end{equation} where $\GE^{(0)}_p$ and $\GE^{(0)}_r$ are given by \eqref{E00} and \eqref{613}.
\vskip 1mm
The next step in the derivation of the limit energy consists in minimizing $\ds\int_{-1}^1Q\big(\GE^{(0)}_p\big)dX_3$ (resp. $\ds\int_D Q\big(\GE^{(0)}_r\big)dX_1dX_2$) with respect to $\overline{u}^{(0)}_p$( resp. $\overline{\overline{v}}_r^{(0)}$).

First the expressions of $Q$ and of $\GE^{(0)}_p$ under a few calculations show that 
\begin{equation}\label{605}
\begin{aligned}
\int_{-1}^1Q\big(\GE^{(0)}_p\big)dX_3\ge &{E\over 3(1-\nu^2)}\Big[(1-\nu)\sum_{\alpha,\beta=1}^2\Big|{\partial^2{\cal U}^{(0)}_3\over \partial x_\alpha\partial x_\beta}\Big|^2+\nu\big(\Delta{\cal U}^{(0)}_3\big)^2\Big]\\
&+{E\over (1-\nu^2)}\Big[(1-\nu)\sum_{\alpha,\beta=1}^2\big|{\cal Z}^{(0)}_{\alpha\beta}\big|^2+\nu\big({\cal Z}^{(0)}_{11}+{\cal Z}^{(0)}_{22}\big)^2\Big]
\end{aligned}\end{equation}
the expression in the right hand side of \eqref{605} is obtained through replacing $\overline{u}^{(0)}_p$ by 
\begin{equation}\label{barup}
\overline{\overline{u}}^{(0)}_p(\cdot ,\cdot ,X_3)={\nu\over 1-\nu}\Big[\Big({X_3^2\over 2}-{1\over 6}\Big)\Delta {\cal U}^{(0)}_3-X_3\big({\cal Z}^{(0)}_{11}+{\cal Z}^{(0)}_{22}\big)\Big]\Ge_3.
\end{equation}
Following \cite{BGRod} (see equation (4.56) and (4.57)), choosing  ${\cal W}^{(s,0)}_3=0$ and $\overline{\overline{v}}_r^{(0)}$ such that 
\begin{equation}\label{barur}\begin{aligned}
\overline{\overline{\overline{v}}}^{(0)}_{r,1}&=-\nu\Big[{X_2^2-X_1^2 \over 2}{d^2{\cal W}^{(0)}_1 \over dx_3^2}-X_1X_2{d^2{\cal W}^{(0)}_2 \over dx_3^2}\Big],\\
\overline{\overline{\overline{v}}}^{(0)}_{r,2}&=-\nu\Big[{X_1^2-X_2^2 \over 2}{d^2{\cal W}^{(0)}_2 \over dx_3^2}-X_1X_2{d^2{\cal W}^{(0)}_1 \over dx_3^2}\Big],\\
\overline{\overline{\overline{v}}}^{(0)}_{r,3}&=-X_1{d{\cal W}^{(s,0)}_1 \over dx_3}-X_2{d{\cal W}^{(s,0)}_2 \over dx_3},
\end{aligned}\end{equation} permit to obtain 
\begin{equation}\label{606}
\begin{aligned}
\int_D Q\big(\GE^{(0)}_r\big)dX_1dX_2\ge &{E\pi\over 8}\Big[ \Big|{d^2{\cal W}^{(0)}_1 \over dx_3^2}\Big|^2+\Big|{d^2{\cal W}^{(0)}_2 \over dx_3^2}\Big|^2\Big]+{\mu\pi\over 8}\Big|{d{\cal Q}^{(0)}_3 \over dx_3}\Big|^2.
\end{aligned}\end{equation}
 In view of \eqref{6.100}, \eqref{605} and \eqref{606}, the proof of\eqref{step1} is achieved.
\vskip 2mm
\noindent {\it Step  2.} In this step we show that 
$$ \limsup_{\delta\to0}{m_\delta\over \delta^{5}}\le \min_{({\cal U},{\cal W},{\cal Q}_3)\in \D_0}{\cal J}({\cal U},{\cal W},{\cal Q}_3).$$
Let $({\cal U}^{(1)},{\cal W}^{(1)},{\cal Q}^{(1)}_3)\in \D_0$ such that 
$$ \min_{({\cal U},{\cal W},{\cal Q}_3)\in \D_0}{\cal J}({\cal U},{\cal W},{\cal Q}_3)={\cal J}({\cal U}^{(1)},{\cal W}^{(1)},{\cal Q}^{(1)}_3).$$ 

\noindent We consider a sequence $\big({\cal U}^{(n)},{\cal W}^{(n)},{\cal Q}^{(n)}_3\big)_{n\ge 2}$ of elements belonging to $\D_0$ such that

$\bullet$  ${\cal U}^{(n)}_\alpha \in W^{2,\infty}(\omega)\cap H^1_{\gamma_0}(\omega)$ and 
\begin{equation}\label{C1}
\begin{aligned}
&\nabla{\cal U}^{(n)}_\alpha=0\quad \hbox{in}\quad D_{1/n},\quad( \alpha,\beta) \in\{1,2\}^2,\\
&{\cal U}^{(n)}_\alpha \longrightarrow {\cal U}^{(1)}_\alpha \hbox{ strongly in } H^1(\omega),
\end{aligned}
\end{equation}

$\bullet$ ${\cal U}^{(n)}_3 \in W^{3,\infty}(\omega)\cap H^2_{\gamma_0}(\omega)$ and 
\begin{equation}\label{C2}
\begin{aligned}
&{\partial^2{\cal U}^{(n)}_3\over \partial x_\alpha\partial x_\beta}=0\quad \hbox{in}\quad D_{1/n},\quad( \alpha,\beta) \in\{1,2\}^2,\\
&{\cal U}^{(n)}_3 \longrightarrow {\cal U}^{(1)}_3 \hbox{ strongly in } H^2(\omega),
\end{aligned}\end{equation}

$\bullet$  ${\cal W}^{(n)}_\alpha \in W^{3,\infty}(-1/n,L )$ with ${\cal W}^{(n)}_\alpha =0$ in $[-1/n,1/n]$  and
\begin{equation}\label{C3}
{\cal W}^{(n)}_\alpha \longrightarrow {\cal W}^{(1)}_\alpha \hbox{ strongly in } H^2(0,L),
\end{equation}

$\bullet$  ${\cal Q}^{(n)}_3 \in W^{1,\infty}(-1/n,L )$ with ${\cal Q}^{(n)}_3 =0$ in $[-1/n,1/n]$  and
\begin{equation}\label{C4}
{\cal Q}^{(n)}_3 \longrightarrow {\cal Q}^{(1)}_3 \hbox{ strongly in } H^1(0,L),
\end{equation}

We define  ${\cal W}^{(n)}_3 \in W^{2,\infty}(-1/n,L)$ by
$${\cal W}^{(n)}_3 ={\cal U}^{(n)}_3(0,0)\quad \hbox{and}\quad 
{d{\cal W}^{(n)}_{3}\over dx_3}+{1\over 2}\Big[\Big|{d{\cal W}^{(n)}_{1}\over dx_3}\Big|^2+\Big|{d{\cal W}^{(n)}_{2}\over dx_3}\Big|^2\Big]=0\qquad \hbox{in}\quad ]-{1/n},L[.$$ Obviously we have
\begin{equation}\label{C5}
{\cal W}^{(n)}_3 \longrightarrow {\cal W}^{(1)}_3 \hbox{ strongly in } H^1(0,L).
\end{equation}

In order to define an admissible deformation of the whole structure, we introduce both fields  $\overline{\overline{u}}^{(1)}_p\in L^2(\omega;H^1(-1,1;\R^3))$ obtained through replacing ${\cal U}^{(0)}$ by ${\cal U}^{(1)}$ in \eqref{Z0}-\eqref{barup}  and $\overline{\overline{\overline{v}}}^{(1)}_{r,\alpha}\in L^2(0,L;H^1(D))$ obtained through replacing ${\cal W}^{(0)}$ and ${\cal Q}^{(0)}_3$ by ${\cal U}^{(1)}$ and ${\cal Q}^{(0)}_3$ in  \eqref{barur} and taking $\overline{\overline{\overline{v}}}^{(1)}_{r,3}=0$.
\vskip 1mm
Then, we consider two sequences of warpings $\overline{u}^{(n)}_p, \overline{\overline{v}}^{(n)}_r$ such that
\vskip 1mm
$\bullet$  $\overline{u}^{(n)}_p \in W^{1,\infty}(\Omega;\R^3)$ with $\overline{u}^{(n)}_p=0$ on $\partial\omega\times ]-1,1[$, $\overline{u}^{(n)}_p=0$ in the cylinder $D(O, 1/n)\times ]-1,1[$  and
$$\overline{u}^{(n)}_p \longrightarrow  \overline{\overline{u}}^{(1)}_p \hbox{ strongly in } L^2(\omega ; H^1(-1,1;\R^3)),$$ 

$\bullet$  $\overline{\overline{v}}^{(n)}_r \in W^{1,\infty}(]-1/n,L[\times D;\R^3)$ with $\overline{\overline{v}}^{(n)}_r=0$  in the cylinder $D\times ]-1/n,1/n[$  and
$$\overline{\overline{v}}^{(n)}_r \longrightarrow \overline{\overline{\overline{v}}}^{(1)}_r \hbox{ strongly in } L^2(0,L ; H^1(D;\R^3)).$$

\vskip 2mm
For $n$ fixed, let us consider the sequence of deformations of the plate $\Omega_\delta$. We set
\begin{equation}\label{testnP}
\begin{aligned}
v^{(n)}_{\delta,1}(x)&=x_1+\delta^{2}\Big[{\cal U}^{(n)}_1(x_1,x_2)-{x_3\over \delta}{\partial {\cal U}^{(n)}_3\over \partial x_1}(x_1,x_2)+\delta\overline{u}^{(n)}_{p,1}(x_1,x_2,{x_3\over \delta}\big)\Big],\\
v^{(n)}_{\delta,2}(x)&=x_2+\delta^{2}\Big[{\cal U}^{(n)}_2(x_1,x_2)-{x_3\over \delta}{\partial {\cal U}^{(n)}_3\over \partial x_2}(x_1,x_2)+\delta\overline{u}^{(n)}_{p,2}(x_1,x_2,{x_3\over \delta}\big)\Big],\\
v^{(n)}_{\delta,3}(x)&=x_3+\delta\Big[{\cal U}^{(n)}_3(x_1,x_2)+\delta^2\overline{u}^{(n)}_{p,3}(x_1,x_2,{x_3\over \delta}\big)\Big].
\end{aligned}\end{equation}
 If $\delta$ is small enough (in order to have $\delta\le 1/n$) the expression of $v^{(n)}_\delta$ 
 in  the cylinder $C_{\delta}$ is given by
\begin{equation}\label{testnCyl}
\begin{aligned}
v^{(n)}_{\delta,1}(x)&=x_1+\delta^{2}\Big[{\cal U}^{(n)}_1(0,0)-{x_3\over \delta}{\partial {\cal U}^{(n)}_3\over \partial x_1}(0,0)\Big],\\
v^{(n)}_{\delta,2}(x)&=x_2+\delta^{2}\Big[{\cal U}^{(n)}_2(0,0)-{x_3\over \delta}{\partial {\cal U}^{(n)}_3\over \partial x_2}(0,0)\Big],\\
v^{(n)}_{\delta,3}(x)&=x_3+\delta \Big[{\cal U}^{(n)}_3(0,0)+ x_1{\partial {\cal U}^{(n)}_3\over \partial x_1}(0,0)+x_2{\partial {\cal U}^{(n)}_3\over \partial x_2}(0,0)\Big].
\end{aligned}
\end{equation} 
 We denote  
$${\cal Q}^{(n)}=-{d{\cal W}^{(n)}_2\over dx_3}\Ge_1+{d{\cal W}^{(n)}_1\over dx_3}\Ge_2+{\cal Q}^{(n)}_3\Ge_3,\qquad {\cal R}^{(n)}={\partial {\cal U}^{(n)}_3\over \partial x_2}(0,0)\Ge_1- {\partial {\cal U}^{(n)}_3\over \partial x_1}(0,0)\Ge_2.$$ The field ${\cal Q}^{(n)}$ belongs to $W^{1,\infty}(-1/n,L;\R^3)$. Let $\GR_\delta^{(n)}$ be the matrix field defined by
\begin{equation}\label{GR}
\GR_\delta^{(n)}(0)=\GI_3,\qquad {d\GR_\delta^{(n)}\over dx_3}=
\GA_{F^{(n)}_\delta}\GR_\delta^{(n)}\quad \hbox{in}\quad [-1/n,L]
\end{equation} where $F^{(n)}_\delta=\ds  \delta^{1/2}{d{\cal Q}^{(n)}\over dx_3}+\delta{\cal R}^{(n)}$ (see \eqref{1}) and let ${\cal W}^{(n)}_\delta$ be defined in $[-1/n,L]$ by
\begin{equation}\label{Wnd}
{\cal W}^{(n)}_\delta(x_3)=\int_0^{x_3}\big(\GR_\delta^{(n)}(t)-\GI_3\big)\Ge_3dt+\delta^2{\cal U}^{(n)}_1(0,0)\Ge_1+\delta^2{\cal U}^{(n)}_2(0,0)\Ge_2+\delta{\cal U}^{(n)}_3(0,0)\Ge_3.
\end{equation} 

We have $\GR_\delta^{(n)}\in W^{1,\infty}(-1/n , L ; SO(3))$, ${\cal W}^{(n)}_\delta\in W^{2,\infty}(-1/n , L ; \R^3)$  and the following strong convergences (as $\delta$ tends towards $0$)
\begin{equation}\label{740}
\begin{aligned}
\GR_\delta^{(n)}&\longrightarrow \GI_3 \hbox{ strongly in } W^{1,\infty}(-1/n,L; SO(3)),\\
{1\over \delta^{1/2}}{d\GR_\delta^{(n)}\over dx_3}&\longrightarrow \GA_{{d{\cal Q}^{(n)}\over dx_3}} \hbox{ strongly in } L^\infty(-1/n,L; \R^9),\\
{1\over \delta^{1/2}}\big(\GR_\delta^{(n)}-\GI_3\big)&\longrightarrow \GA_{{\cal Q}^{(n)}}\hbox{ strongly in } W^{1,\infty}(-1/n,L; \R^9),\\
{1\over \delta}\big(\GR_\delta^{(n)}-\GI_3\big)\Ge_3\cdot\Ge_3&\longrightarrow -{1\over 2}||\GA_{{\cal Q}^{(n)}}\Ge_3||_2^2 \hbox{ strongly in } L^{\infty}(-1/n,L),\\
{1\over \delta^{1/2}}{\cal W}^{(n)}_{\delta,\alpha}&\longrightarrow {\cal W}^{(n)}_\alpha \hbox{ strongly in } W^{1,\infty}(-1/n,L),\\
{1\over \delta}{\cal W}^{(n)}_{\delta,3}&\longrightarrow {\cal W}^{(n)}_3 \hbox{ strongly in } W^{1,\infty}(-1/n,L).
\end{aligned}\end{equation} By definition, ${\cal Q}^{(n)}$ is equal to 0 in $[-1/n,1/n]$, hence we have
$$\forall x_3\in [-1/n , 1/n],\qquad \GR_\delta^{(n)}(x_3)=\exp\big(\delta\GA_{{\cal R}^{(n)}}x_3\big).$$ Now, we consider the fields $\overline{\GR}_\delta^{(n)}\in W^{1,\infty}(-\delta , L ; SO(3))$ and $\overline{\cal W}^{(n)}_\delta\in W^{2,\infty}(-\delta , L ; \R^3)$  defined by
\begin{equation}\label{GR}
\begin{aligned}
\overline{\GR}_\delta^{(n)}(x_3)&=\exp\big(\delta\GA_{{\cal R}^{(n)}}x_3\big)\quad \hbox{in}\quad [-\delta,L],\\\
\overline{\cal W}^{(n)}_\delta(x_3)&=\int_0^{x_3}\big(\overline{\GR}_\delta^{(n)}(t)-\GI_3\big)\Ge_3dt+ {\cal W}^{(n)}_\delta(0)\quad \hbox{in}\quad [-\delta,L].
\end{aligned}\end{equation} 
We introduce a last displacement $\widetilde{v}^{(n)}_{\delta}$ belonging to $W^{1,\infty}( B_\delta ; \R^3)$ (for $\delta\le 1/n$) 
$$
\begin{aligned}
\widetilde{v}^{(n)}_{\delta}(x)=
&\begin{pmatrix}
\ds \delta^{2}\big({\cal U}^{(n)}_1(0,0)-{x_3\over \delta}{\partial {\cal U}^{(n)}_3\over \partial x_1}(0,0)\big)\\
\ds \delta^{2}\big({\cal U}^{(n)}_2(0,0)-{x_3\over \delta}{\partial {\cal U}^{(n)}_3\over \partial x_2}(0,0)\big)\\
\ds \delta\big( {\cal U}^{(n)}_3(0,0)+x_1{\partial {\cal U}^{(n)}_3\over \partial x_1}(0,0)+x_2{\partial {\cal U}^{(n)}_3\over \partial x_2}(0,0)\big)
\end{pmatrix}\\
&-\overline{\cal W}^{(n)}_\delta(x_3)-\big(\overline{\GR}^{(n)}_\delta(x_3)-\GI_3)(x_1\Ge_1+x_2\Ge_2).
\end{aligned}
$$ We have 
$$||\nabla \widetilde{v}^{(n)}_{\delta}||_{L^\infty(B_\delta ; \R^9)}\le C^{(n)}\delta^2.$$
Now, we are in a position to define the deformation $v^{(n)}_\delta$ in the rod $B_\delta$. We set
\begin{equation}\label{testnR}
v^{(n)}_{\delta}(x)=x+{\cal W}^{(n)}_\delta(x_3)+\big(\GR^{(n)}_\delta(x_3)-\GI_3)(x_1\Ge_1+x_2\Ge_2)+
\delta^{5/2}\overline{\overline{v}}^{(n)}_r\big({x_1\over \delta},{x_2\over \delta},x_3\big)+\widetilde{v}^{(n)}_{\delta}(x).
\end{equation} In the cylinder $C_\delta$, the above expression of $v^{(n)}_\delta$ matches the one given by \eqref{testnCyl} if $\delta$ is small enough ( $\delta\le 1/n$).
\vskip 2mm

By construction the deformation $v^{(n)}_\delta$ belongs to $\D_{\delta}$ and satisfies
$$||\nabla v^{(n)}_\delta-\GI_3||_{L^\infty({\cal S}_{\delta};\R^9)}\le C(n)\delta^{1/2}.$$ Hence, for a.e. $x\in {\cal S}_{\delta}$ we have $\det\big(\nabla v^{(n)}_\delta(x)\big)>0$. Then we have
\begin{equation}\label{630}
m_\delta\le J_\delta(v^{(n)}_\delta).
\end{equation}

The expression \eqref{testnP} of the displacement  $v^{(n)}_\delta-I_d$ in the plate is similar to the decomposition \eqref{FDec} given in Section 2. Hence the results of Subsection \ref{In the plate.} and the regularity of the terms ${\cal U}^{(n)}$ and $\overline{u}^{(n)}$ lead to 
\begin{equation}\label{631}
{1\over 2\delta^{2}}\Pi _\delta \big((\nabla v^{(n)}_\delta)^T\nabla  v^{(n)}_\delta -\GI_3\big)\longrightarrow
\GE^{(n)}_p\qquad\hbox{strongly in}\quad L^\infty(\Omega;\R^9),
\end{equation} where the symmetric matrix $\GE^{(n)}_p$ is defined by
\begin{equation*}
\GE^{(n)}_p=\begin{pmatrix}
\displaystyle  -X_3{\partial^2{\cal U}^{(n)}_3\over \partial x_1^2}+{\cal Z}^{(n)}_{11} & \displaystyle  -X_3 {\partial^2 {\cal U}^{(n)}_3\over \partial x_1\partial x_2}+{\cal Z}^{(n)}_{12}
&\displaystyle  {1\over 2}{\partial\overline{u}^{(n)}_{p,1} \over \partial X_3}\\
* & \displaystyle  -X_3{\partial^2 {\cal U}^{(n)}_3\over \partial x_2^2}+{\cal Z}^{(n)}_{22}  &\displaystyle  {1\over 2}{\partial\overline{u}^{(n)}_{p,2} \over \partial X_3}\\
* & *&  \displaystyle  {\partial\overline{u}^{(n)}_{p,3} \over \partial X_3}
\end{pmatrix}\end{equation*} Remark that here $\overline{u}^{(n)}_p=\overline{u}^{(n)}$ (see \eqref{4.101} and where  the ${\cal Z}^{(n)}_{\alpha\beta}$'s are obtained  through replacing  ${\cal U}$  by  ${\cal U}^{(n)}$ in \eqref{401}. 

\noindent Now, in the rod  $B_{\delta}$ we have the following strong convergence in $L^\infty( B ;\R^9)$ (as $\delta$ tends towards $0$):
\begin{equation}\label{760}
{1\over \delta^{3/2}} P_\delta\big(\nabla v^{(n)}_\delta-\GR^{(n)}_\delta\big)\longrightarrow  {d\GR^{(n)}\over dx_3}(X_1\Ge_1+X_2\Ge_2)+
\begin{pmatrix}
\ds   \gamma_{11}(\overline{\overline{v}}^{(n)}_r) & \ds   \gamma_{12}(\overline{\overline{v}}^{(n)}_r) & \ds {1\over 2}{\partial\overline{\overline{v}}^{(n)}_{r,3}\over \partial X_1}\\  \\
* & \ds   \gamma_{22}(\overline{\overline{v}}^{(n)}_r) & \ds {1\over 2}{\partial\overline{\overline{v}}^{(n)}_{r,3}\over \partial X_2}\\  \\
* & * &  0 
\end{pmatrix}.
\end{equation} Then we obtain
\begin{equation}\label{731}
{1\over 2\delta^{3/2}} P_\delta \big((\nabla v^{(n)}_\delta)^T\nabla  v^{(n)}_\delta -\GI_3\big)\longrightarrow
\GE^{(n)}_r\qquad\hbox{strongly in}\quad L^\infty(0 ;\R^9),
\end{equation} where the symmetric matrix $\GE^{(n)}_r$ is defined by
\begin{equation}\label{613}
\GE^{(n)}_r= \begin{pmatrix}
\ds   \gamma_{11}(\overline{\overline{v}}^{(n)}_r) & \ds   \gamma_{12}(\overline{\overline{v}}^{(n)}_r) & \ds -{1\over 2}X_2{d{\cal Q}^{(n)}_3\over dx_3}+{1\over 2}{\partial\overline{\overline{v}}^{(n)}_{r,3}\over \partial X_1}\\  \\
* & \ds   \gamma_{22}(\overline{\overline{v}}^{(n)}_r) & \ds {1\over 2}X_1{d{\cal Q}^{(n)}_3\over dx_3}+{1\over 2}{\partial\overline{\overline{v}}^{(n)}_{r,3}\over \partial X_2}\\  \\
* & * &  \ds -X_1{d^2{\cal W}^{(n)}_1\over dx^2_3}-X_2{d^2{\cal W}^{(n)}_2\over dx^2_3}
\end{pmatrix}.
\end{equation}  Before passing to the limit, notice that
$$
\begin{aligned}
\Big|{1\over \delta^5}\int_{{\cal S}_{\delta}}\widehat{W}_\delta(\nabla  v^{(n)}_\delta)(x)dx-\int_\Omega Q\Big(\Pi_\delta \big((\nabla v^{(n)}_\delta)^T\nabla  v^{(n)}_\delta -\GI_3\big)\Big)-\int_B Q\Big(P_\delta \big((\nabla v^{(n)}_\delta)^T\nabla  v^{(n)}_\delta -\GI_3\big)\Big)\Big|\\
\le {C\over \delta^5}\int_{C_\delta}||| \big((\nabla v^{(n)}_\delta)^T\nabla  v^{(n)}_\delta -\GI_3\big)|||^2, 
\end{aligned}$$ then from the expression \eqref{testnCyl} of $v^{(n)}_\delta$ in $C_\delta$ and the strong convergences \eqref{631}-\eqref{731} we get
$$\lim_{\delta\to 0}{1\over \delta^5}\int_{{\cal S}_{\delta}}\widehat{W}_\delta(\nabla  v^{(n)}_\delta)(x)dx=\int_\Omega Q(\GE^{(n)}_p)+\int_B Q(\GE^{(n)}_r).$$
\vskip 1mm
 From the expressions of $v^{(n)}_\delta$ in the plate and in the rod, from the convergences \eqref{740} and taking to account the expressions of the applied forces \eqref{ForceP}  we get
 \begin{equation*}
 \lim_{\delta\to 0}{1\over \delta^{5}}\int_{{\cal S}_{\delta}} f_{\delta}\cdot(v^{(n)}_\delta -I_d)= {\cal L}({\cal U}^{(n)},{\cal W}^{(n)},{\cal Q}^{(n)}_3).
 \end{equation*}  Then, from the above limits and \eqref{630} we finally get
 \begin{equation}\label{LimStep2}
 \limsup_{\delta\to0}{m_\delta\over \delta^{5}}\le\lim_{\delta\to 0}{J_\delta (v^{(n)}_\delta)\over \delta^5}= \int_{\Omega}Q\big(\GE^{(n)}_p\big)+ \int_{B} Q\big(\GE^{(n)}_r\big)-{\cal L}({\cal U}^{(n)},{\cal W}^{(n)},{\cal Q}^{(n)}_3).
 \end{equation} Now, $n$ goes to infinity, due to the definitions of the warpings $\overline{\overline{u}}^{(1)}_p$ and $\overline{\overline{\overline{v}}}^{(1)}_r$ and  the strong convergences \eqref{C1}-\eqref{C2}-\eqref{C3}-\eqref{C4}-\eqref{C5} that give
 $$ \limsup_{\delta\to0}{m_\delta\over \delta^{5}}\le \int_{\Omega}Q\big(\GE^{(1)}_p\big)+ \int_{B} Q\big(\GE^{(1)}_r\big)-{\cal L}({\cal U}^{(1)},{\cal W}^{(1)},{\cal Q}^{(1)}_3)
 = {\cal J}({\cal U}^{(1)},{\cal W}^{(1)},{\cal Q}^{(1)}_3).$$
This conclude the proof of the theorem.
\end{proof}
\begin{remark} Let us point out that Theorem \ref{theo9.1} shows that for any minimizing sequence $(v_\delta)_{\delta>0}$ as in Step 1, the convergence of the rescaled Green-St Venant's strain tensor in \eqref{E0} is a strong convergence in $L^2(\Omega;\R^{3\times 3})$ and the convergence \eqref{612} is a strong convergence in $L^2(B;\R^{3\times 3})$.
\end{remark}

\end{document}